\documentclass[a4paper,11pt]{amsart}
\allowdisplaybreaks 
\usepackage[english]{babel}
\usepackage[utf8]{inputenc}
\usepackage[T1]{fontenc}
\usepackage[%
	useprefix,%
	hyperref,%
	backend=bibtex,%
	maxbibnames=99,%
	url=false,%
	doi=false,%
	isbn=false,%
	firstinits=true
	]{biblatex}
\bibliography{./Some-remarks-on-contact-variations-BIB}

\usepackage{amssymb}
\usepackage{mathrsfs}

\usepackage{hyperref}
\usepackage{csquotes}
\usepackage{enumitem}	

\newcommand{\scr}[1]{\mathscr{#1}}
\newcommand{\frk}[1]{\mathfrak{#1}}
\newcommand{\bb}[1]{\mathbb{#1}}

\newcommand{\N}{\mathbb{N}}	
\newcommand{\R}{\mathbb{R}}	
\newcommand{\Co}{\mathscr{C}}	
\renewcommand{\Vec}{\mathrm{Vec}}	
\newcommand{\Id}{\mathrm{Id}}	
\newcommand{\Span}{\mathrm{span}}	
\newcommand{\ssubset}{\subset\subset}	
\newcommand{\dd}{\,\mathrm{d}}	
\newcommand{\de}{\partial}		
\newcommand{\LL}{{\scr L}}	
\renewcommand{\div}{{\rm div}}	

\newcommand{\HH}{\mathbb{H}}	
\newcommand{\hh}{\frk h} 
\newcommand{\grad}{\nabla}
\newcommand{\spt}{{\rm spt}}
\newcommand{\THEN}{\Rightarrow}	

\newcommand{\Cow}{\Co^1_{\bb W}}
\newcommand{\II}{{I\!I}}

\renewcommand{\Psi}{{G}}

\theoremstyle{plain}
\newtheorem{proposition}{Proposition}[section]
\newtheorem{theorem}[proposition]{Theorem}
\newtheorem{lemma}[proposition]{Lemma}

\newtheorem{Coj}[proposition]{Conjecture}

\theoremstyle{definition}
\newtheorem{definition}[proposition]{Definition}
\newtheorem{remark}[proposition]{Remark}


\title[Contact variations in $\HH$]{%
	Some remarks on contact variations\\ in the first Heisenberg group}
\author[Golo]{Sebastiano Golo}
\thanks{The author has been supported by the People Programme (Marie Curie Actions) of the European Union's Seventh Framework Programme FP7/2007-2013/ under REA grant agreement n. 607643.}
\address[Golo]{University of Jyvaskyla, Finland; University of Trento, Italy}
\date{\today}

\subjclass[2010]{%
53C17,
49Q20
}
\keywords{%
Sub-Riemannian Geometry, %
Sub-Riemannian perimeter, %
Heisenberg Group, %
Bernstein's Problem, %
Contact diffeomorphisms%
}

\begin{document}
\maketitle
\begin{abstract}
 	We show that
	in the first sub-Riemannian Heisenberg group there are intrinsic graphs of smooth functions 
	that are both critical and stable points of the sub-Riemannian perimeter under compactly supported variations of contact diffeomorphisms,
	despite the fact that they are not area-minimizing surfaces.
	In particular, we show that if $f:\R^2\to\R$ is a $\Co^1$-intrinsic function, and $\grad^f\grad^ff=0$, then 
	the first contact variation of the sub-Riemannian area of its intrinsic graph is zero and 
	the second contact variation is positive.
\end{abstract}
\setcounter{tocdepth}{1}
\tableofcontents

\section{Introduction}

We want to address some new features of the sub-Riemannian perimeter in the Heisenberg group.
The notion of sub-Riemannian perimeter in the Heisenberg group, the so-called \emph{intrinsic perimeter}, has been enstablished as a direct and natural extension from the Euclidean perimeter in $\R^n$.
However, in many aspects, there are fundamental differences that lead to new open questions \cite{MR1871966,MR2313532,MR3587666,MR1404326,MR3363670}.

Before a detailed explanation, let us introduce some basic notions and notations we need in this introduction.
The (first) Heisenberg group $\HH$ is a three dimensional Lie group diffeomorphic to $\R^3$.
However, when endowed with a left-invariant sub-Riemannian distance, it becomes a metric space with Hausdorff dimension equal to four; see \cite{MR2312336}.

By standard methods of Geometric Measure Theory, one defines the \emph{intrinsic perimeter} $P(E;\Omega)$ of a measurable set $E\subset\HH$ in an open set $\Omega\subset\HH$.
We will denote it also by $\scr A(\de E\cap \Omega)$.

\emph{Regular surfaces} are topological surfaces in $\HH$ that admit a continuously varying tangent plane 
and they play an important role in the theory of sets with finite intrinsic perimeter.
They are the sub-Riemannian counterpart of smooth hypersurfaces in $\R^n$.
Regular surfaces are locally graphs of so-called \emph{$\Co^1$-intrinsic functions} $\R^2\to\R$.
We will focus on these functions and their graphs.

The space of all $\Co^1$-intrinsic functions will be denoted by $\Cow$ and the graph of $f:\R^2\to\R$ by $\Gamma_f\subset\HH$.
It is well known that $f\in\Cow$ if and only if $f\in\Co^0(\R^2)$ and the distributional derivative 
\[
\grad^ff = \de_\eta f+\frac12\de_\tau (f^2)
\]
is continuous, where we denote by $(\eta,\tau)$ the coordinates on $\R^2$; see \cite{MR2223801,MR3587666}.
If $\omega\subset\R^2$, the intrinsic area of $\Gamma_f$ above $\omega$ is
\[
\scr A(\Gamma_f\cap \Omega_\omega ) = \int_\omega \sqrt{1+(\grad^ff)^2} \dd \eta\dd \tau ,
\]
where $\Omega_\omega=\{(0,\eta,\tau)*(\xi,0,0):(\eta,\tau)\in\omega,\ \xi\in\R\}$, with $*$ denoting the group operation in $\HH$.

An important open problem concerning $\Cow$ is \emph{Bernstein's problem}: If the graph $\Gamma_f$ of $f\in\Cow$ is a locally minimizer of the intrinsic area, is $\Gamma_f$ a plane?
See Section~\ref{subs11241814} for a precise statement and \cite{MR2472175,MR2333095,MR2455341,MR3406514} for further reading.

In the study of perimeter minimizers in $\HH$, we identify three main issues that mark the gap from the Euclidean theory.
First, the map $f\mapsto\grad^ff$ is a nonlinear operator. 
Such non-linearity reflects on the fact that basic function spaces like $\Cow$ itself, or the space of functions with bounded intrinsic variation, are not vector spaces.
See Remark~\ref{rem10261128} for details.

Second, the area functional is not convex (say on $\Co^1(\R^2)$).
In particular, there are critical points that are not extremals, see \cite{MR2472175}.
In other words, a first variation condition
\begin{equation}\label{eq10261829}
\left.\frac{\dd}{\dd \epsilon}\right|_{\epsilon=0} \scr A(\Gamma_{f+\epsilon\phi}\cap (\Omega_\omega)) = 0
\qquad\forall\phi\in\Co^\infty_c(\omega)
\end{equation}
does not characterize minimizers.
However, if $f\in\Co^1(\R^2)$, a second variation condition $\left.\frac{\dd^2}{\dd \epsilon^2}\right|_{\epsilon=0} \scr A(\Gamma_{f+\epsilon\phi}\cap (\Omega_\omega)) \ge 0$ does, see \cite{MR3406514}.

Third, there are objects among sets of finite intrinsic perimeter with very low regularity, see Remark~\ref{rem10261128}.
The standard variational approach as in~\eqref{eq10261829} fails when applied to these objects.
More precisely, if $f\in\Cow$, 
then $\scr A(\Gamma_{f+\epsilon\phi}\cap (\Omega_\omega))$ may be $+\infty$ for all $\epsilon\neq0$, all $\omega\subset\R^2$ open and all $\phi\in\Co^\infty_c(\omega)\setminus\{0\}$.
In another approach, one can consider smooth one-parameter families of diffeomorphisms $\Phi_\epsilon:\HH\to\HH$ with $\Phi_0=\Id$ and $\{\Phi_\epsilon\neq\Id\}\ssubset\Omega$, and take variations of $\scr A(\Phi_\epsilon(\Gamma_f)\cap\Omega)$.
However, it may happen again that $\scr A(\Phi_\epsilon(\Gamma_f)\cap\Omega)=+\infty$ for all $\epsilon\neq0$.

After further considerations, one understands that we need to restrict the choice of $\Phi_\epsilon$ to \emph{contact diffeomorphisms}, see Proposition~\ref{prop11202024}.
See also~\cite{MR2312336} and~\cite{MR1317384} for references on contact diffeomorphisms.
In this setting, we address the question whether, despite this restriction, conditions on the first and second variations with contact diffeomorphisms can single out minimal graphs.
Our answer is no:
\begin{theorem}\label{teo10261843}
	There is $f\in\Cow$ such that, for all $\Omega\subset\HH$ open and all smooth one-parameter families of contact diffeomorphisms $\Phi_\epsilon:\HH\to\HH$ with $\Phi_0=\Id$ and $\{\Phi_\epsilon=\Id\}\ssubset\Omega$,
	it holds
	\[
	\left.\frac{\dd}{\dd \epsilon}\right|_{\epsilon=0} \scr A(\Phi_\epsilon(\Gamma_f)\cap \Omega) = 0
	\quad\text{ and }\quad
	\left.\frac{\dd^2}{\dd \epsilon^2}\right|_{\epsilon=0} \scr A(\Phi_\epsilon(\Gamma_f)\cap \Omega) \ge 0,
	\]
	but $\Gamma_f$ is not an area-minimizing surface.
\end{theorem}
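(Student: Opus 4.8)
The plan is to exhibit an explicit $f\in\Cow$ with $\grad^f\grad^ff=0$ that is not a plane, and then, as the abstract advertises, to reduce the contact-variation statement to a computation of the first and second variations of the area functional $\int_\omega\sqrt{1+(\grad^ff)^2}\dd\eta\dd\tau$ along the paths $\epsilon\mapsto f_\epsilon$ that arise from $\Phi_\epsilon(\Gamma_f)$. The natural candidate is $f(\eta,\tau)=-\tau/\eta$ on a half-plane (or a suitable globally defined variant), the classical example from \cite{MR2472175} of a critical non-minimal intrinsic graph; one checks directly that $\grad^ff=\de_\eta f+f\de_\tau f$ is constant along this $f$, so that $\grad^f\grad^ff=0$. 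The first task is therefore to fix a concrete $f$, verify $f\in\Cow$, verify $\grad^f\grad^ff=0$, and recall from \cite{MR2472175} (or re-derive via a competitor) that $\Gamma_f$ is not area-minimizing.

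Next I would set up the contact-variation side. By Proposition~\ref{prop11202024}, the contact condition is exactly what guarantees that $\Phi_\epsilon(\Gamma_f)$ is again (locally) the intrinsic graph of a $\Co^1_\bb W$ function $f_\epsilon$, with $f_0=f$, and that $\epsilon\mapsto f_\epsilon$ is smooth in the appropriate topology with $\scr A(\Phi_\epsilon(\Gamma_f)\cap\Omega_\omega)=\int_\omega\sqrt{1+(\grad^{f_\epsilon}f_\epsilon)^2}\dd\eta\dd\tau$. Writing $g_\epsilon:=\grad^{f_\epsilon}f_\epsilon$, so $g_0=\grad^ff$ is a constant $c$, differentiation under the integral gives
\[
\left.\frac{\dd}{\dd\epsilon}\right|_{\epsilon=0}\scr A
=\int_\omega\frac{c}{\sqrt{1+c^2}}\,\dot g_0\dd\eta\dd\tau,
\qquad
\left.\frac{\dd^2}{\dd\epsilon^2}\right|_{\epsilon=0}\scr A
=\int_\omega\frac{\ddot g_0\,c}{\sqrt{1+c^2}}+\frac{(\dot g_0)^2}{(1+c^2)^{3/2}}\dd\eta\dd\tau,
\]
where $\dot g_0,\ddot g_0$ are the first and second $\epsilon$-derivatives of $g_\epsilon$ at $0$. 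The key algebraic point is that $\dot g_0$ and $\ddot g_0$ are, up to the nonlinearity of $f\mapsto\grad^ff$, divergence-like expressions in the variation vector field, so that $\int_\omega \dot g_0=0$ and $\int_\omega\ddot g_0=0$ for variations compactly supported in $\omega$; granting this, the first variation vanishes and the second variation reduces to $\frac{1}{(1+c^2)^{3/2}}\int_\omega(\dot g_0)^2\dd\eta\dd\tau\ge0$, which is exactly the claim. Concretely, since $\grad^f f=\de_\eta f+\tfrac12\de_\tau(f^2)$, one has $g_\epsilon=\de_\eta f_\epsilon+\tfrac12\de_\tau(f_\epsilon^2)$, hence $\dot g_0=\de_\eta\dot f_0+\de_\tau(f\dot f_0)$ is a pure divergence in $(\eta,\tau)$, and $\ddot g_0=\de_\eta\ddot f_0+\de_\tau(f\ddot f_0)+\de_\tau((\dot f_0)^2)$ is again a divergence; both integrate to zero over $\omega$ because $f_\epsilon\equiv f$ near $\de\omega$ (this is where $\{\Phi_\epsilon=\Id\}\ssubset\Omega$ enters). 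Passing from a general open $\Omega$ to such an $\omega$ is done by choosing $\omega$ to contain the projection of $\Omega\cap\Gamma_f$ and exhausting.

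The main obstacle I anticipate is not the variational calculus above but the regularity and parametrization bookkeeping: one must justify that for a smooth family of contact diffeomorphisms the image $\Phi_\epsilon(\Gamma_f)$ is, over the relevant region, still a graph $\Gamma_{f_\epsilon}$ with $f_\epsilon$ depending smoothly enough on $\epsilon$ that differentiation under the integral sign and the formulas for $\dot f_0,\ddot f_0$ are legitimate, and that the area integral is finite for small $\epsilon$ (recall the cautionary remarks in the introduction about $\scr A=+\infty$). This is exactly the content that Proposition~\ref{prop11202024} is meant to supply, so the real work is to invoke it correctly — in particular to extract from the contact vector field $X=\left.\de_\epsilon\right|_0\Phi_\epsilon$ the first variation $\dot f_0$ and to see that it, and the second-order term $\ddot f_0$, have compact support in $\omega$. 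Once that is in place, the divergence structure of $\dot g_0$ and $\ddot g_0$ and the sign of the remaining quadratic term finish the proof, and the failure of minimality is imported from \cite{MR2472175}.
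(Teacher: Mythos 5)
There is a fatal gap at the very first step: you conflate ``$\grad^f\grad^ff=0$'' with ``$\grad^ff$ is globally constant''. The condition in the paper (Definition~\ref{def08101850}) only says that $\psi=\grad^ff$ is constant \emph{along each integral curve} of $\grad^f$ (a parabola); it varies from curve to curve, and indeed by Lemma~\ref{lem11191504} the relevant examples have $\tau\mapsto\psi(\eta,\tau)$ non-decreasing and non-constant. If $\psi$ were a global constant $c$, then conditions (1a)--(1b) of Lemma~\ref{lem10251359} force $f(0,\cdot)$ to be constant as well, so $f(\eta,\tau)=c\eta+B$ and $\Gamma_f$ is a vertical plane --- which \emph{is} area-minimizing, so no function of the kind you describe can witness the theorem. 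Your concrete candidate also fails: for $f=-\tau/\eta$ one computes $\grad^ff=2\tau/\eta^2$ and $\grad^f\grad^ff=-6\tau/\eta^3\neq0$. The error then propagates through the variational computation: your vanishing of the first variation and the sign of the second both rest on pulling $c/\sqrt{1+c^2}$ out of the integral. With non-constant $\psi$, $\int_\omega\frac{\psi}{\sqrt{1+\psi^2}}\,\dot g_0$ does not vanish merely because $\dot g_0$ is a divergence (one must integrate by parts against $\grad^f$ and use $\grad^f\psi=0$, cf.\ Proposition~\ref{prop11170020}), and, more seriously, the term $\int_\omega\frac{\psi}{\sqrt{1+\psi^2}}\,\ddot g_0$ in the second variation does \emph{not} drop out. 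Its contribution is exactly the delicate part: the paper's Lemma~\ref{lem11171413} shows that after a long rearrangement it produces $\de_\tau\big(\psi/\sqrt{1+\psi^2}\big)\big(\grad^fV_2-\grad^f(fV_1)\big)^2$, whose non-negativity requires the monotonicity of $\tau\mapsto\psi(\eta,\tau)$ from Lemma~\ref{lem11191504}. None of this is addressed in your proposal.

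Two secondary issues. First, for $f\in\Cow\setminus\Co^1$ the quantities $\dot f_0,\ddot f_0$ need not exist pointwise (they involve Euclidean derivatives of $f$); the paper avoids this by writing the area in the moving coordinates $\phi^\epsilon$, so that only $\grad^f$ and $\Delta^f$ applied to the smooth $\phi^\epsilon$ appear (Lemma~\ref{lem10251040}), and by approximating $f$ with smooth Lagrangian solutions (Lemma~\ref{lem10251843}). Second, the reduction from contact diffeomorphisms of $\HH$ to planar variations is Propositions~\ref{prop11191058}--\ref{prop11191059}, not Proposition~\ref{prop11202024} (which is the converse statement that perimeter preservation forces the contact property). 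The overall architecture you sketch --- pick a ruled non-minimizing graph, reduce to planar variations, compute two variations --- does match the paper, but the computation you propose only closes under a hypothesis that is incompatible with non-minimality.
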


In fact, all smooth solutions to the equation $\grad^f(\grad^ff)=0$ are examples of the functions appearing in the theorem.

The proof of Theorem~\ref{teo10261843} is based on a ``Lagrangian'' approach to $\Cow$.
Indeed, a function $f\in\Cow$ is uniquely characterized by the integral curves of the planar vector field $\grad^f=\de_\eta+f\de_\tau$.
We will thus take variations of $f$ via smooth one-parameter families of diffeomorphisms $\phi_\epsilon:\R^2\to\R^2$, i.e., by smoothly varying the integral curves of $\grad^f$; see Section~\ref{sec11251132}.
We will then prove that this approach is equivalent to the use of contact diffeomorphisms $\Phi_\epsilon:\HH\to\HH$; see Section~\ref{sec11241822}.

Finally, we will consider functions $f\in\Cow$ that are solutions to the equation $\grad^f\grad^ff=0$ in a Lagrangian sense, that is, functions such that $\grad^ff$ is constant along the integral curves of $\grad^f$.
We will characterize such functions as the ones for which the integral curves of $\grad^f$ are parabolas, or, equivalentely, as the ones whose graph $\Gamma_f$ is ruled by horizontal straight lines.
These functions are the ones appearing in Theorem \ref{teo10261843}.\\

The paper is organized as follows.
Section~\ref{sec10262024} is devoted to the presentation of all main definitions.
In the next Section~\ref{sec10262025}, we study solutions to the equation $\grad^f\grad^ff=0$.
We construct a Lagrangian variation of a function $f\in\Cow$ in Section~\ref{sec11251132}.
In Section~\ref{sec11241822}, we prove some basic properties of contact diffeomorphisms.
Section~\ref{sec10262029} is devoted to the first contact variation and Section~\ref{sec10262031} to the second contact variation for functions $f\in\Cow$.
Finally, in Section~\ref{sec10262032} we prove our main theorem.
An Appendix is added as a reference for a few equalities that are applied all over the paper.\\

The author thanks his advisor Francesco Serra Cassano for many fruitful discussions and Katrin Fässler for her comments and corrections on a draft of this paper.

\section{Preliminaries}\label{sec10262024}
\subsection{The Heisenberg group}\label{subs11191130}
The \emph{first Heisenberg group} $\HH$ is the connected, simply connected Lie group associated to the Heisenberg Lie algebra $\hh$.
The \emph{Heisenberg Lie algebra $\hh$} is the only three-dimensional nilpotent Lie algebra that is not commutative.
It can be proven that, for any two linearly independent vectors $A,B\in\hh\setminus[\hh,\hh]$, the triple $(A,B,[A,B])$ is a basis of $\hh$ and $[A,[A,B]]=[B,[A,B]]=0$.
The Heisenberg group has the structure of a \emph{stratified Lie group}, i.e., $\hh = \Span\{A,B\} \oplus \Span\{[A,B]\}$, see \cite{2015arXiv150903881L,2016arXiv160408579L}.

We then identify $\HH = (\Span\{A,B,[A,B]\},*)$, where
\[
p*q := p+q+\frac12[p,q].
\]
In the coordinates $(x,y,z) = xA+yB+z[A,B]$, which are the exponential coordinates of first kind, we have
\[
(a,b,c)*(x,y,z) = (a+x , b+y , c+z+\frac12(ay-bx) ) .
\]
The inverse is $(x,y,z)^{-1}=(-x,-y,-z)$.

The elements $A,B,[A,B]\in \hh$ induce a frame of left-invariant vector fields on $\HH$:
\[
X:=\de_x - \frac12 y\de_z,
\qquad
Y:=\de_y + \frac12 x\de_z,
\qquad
Z:=\de_z .
\]
The \emph{horizontal subbundle} is the vector bundle 
\[
H:=\bigsqcup_{p\in\HH} \Span\{X(p),Y(p)\}\subset T\HH .
\]

The maps $\delta_\lambda(x,y,z):=(\lambda x,\lambda y,\lambda^2 z)$, $\lambda>0$, are called \emph{dilations}. 
They are group automorphisms of $\HH$ and for all $\lambda,\mu>0$ it holds $\delta_\lambda\circ\delta_\mu=\delta_{\lambda\mu}$.

\subsection{Intrinsic graphs and intrinsic differentials}\label{subs11241752}

A \emph{vertical plane} is 
a plane containing the $z$-axis.
Explicitly, for $\theta\in\R$,
\[
\bb W_\theta := \left\{ (\eta\sin\theta , \eta\cos\theta , \tau):\eta,\tau\in\R \right\} \subset\HH .
\]
Vertical planes are the only 2-dimensional subgroups of $\HH$ that are $\delta_\lambda$-homogeneous, i.e., $\delta_\lambda(\bb W_\theta)=\bb W_\theta$ for all $\lambda>0$.

The \emph{intrinsic $X$-graph} (or simply \emph{intrinsic graph}) of a function $f:\R^2\to\R$ is the set\footnote{
In a different choice of coordinates in $\HH$, we can have $(0,\eta,\tau)*(f(\eta,\tau),0,0)= (f(\eta,\tau),\eta,\tau)$. For instance, we will use these coordinates in Section~\ref{sec10261906}}
\begin{align*}
\Gamma_f &:= \left\{ (0,\eta,\tau)*(f(\eta,\tau),0,0):\eta,\tau\in\R^2 \right\} \\
	&= \left\{ (f(\eta,\tau),\eta,\tau - \frac12\eta f(\eta,\tau)) : \eta,\tau\in\R^2 \right\} .
\end{align*}
If one look at $f$ as a function $\bb W_{0}\to\Span\{A\}$, then $\Gamma_f = \{p*f(p):p\in\bb W_{0}\}$.
Left translations and dilations of an intrinsic graph are also intrinsic graphs.
For $\alpha\in\R$, the vertical plane $\bb W_{\arctan(\alpha)}$ is the intrinsic graph of the function $f(\eta,\tau)=\alpha\eta$.
We will use the map $\pi_{X}:\HH\to\R^2$, $\pi_{X}(x,y,z) = (y,z+\frac12 xy)$.
Note that $\pi_{X}(p*f(p)) = p$.

For $(\eta_0,\tau_0)\in\R^2$ and $f:\R^2\to\R$ continuous, set $f_0 := f(\eta_0,\tau_0)$ and $p_0 := (0,\eta_0,\tau_0)*(f_0,0,0) = (f_0, \eta_0, \tau_0 -\frac12\eta_0 f_0)$.
We say that $f$ is \emph{intrinsically $\Co^1$}, or \emph{belonging to ${\Cow}$}, with differential $\psi:\R^2\to\R$, if 
$\delta_\lambda(p_0^{-1}\Gamma_f)$ converge to $\bb W_{\arctan(\psi(\eta_0,\tau_0))}$ in the sense of the local Hausdorff convergence of sets as $\lambda\to\infty$, and the convergence is uniform on compact sets in $(\eta_0,\tau_0)$.

Notice that $\delta_\lambda(p_0^{-1}\Gamma_f) = \Gamma_{f_{(\eta_0,\tau_0);\lambda} }$, where
\[
f_{(\eta_0,\tau_0);\lambda} ( \eta, \tau) 
= \lambda\left( -f_0 + f(\eta_0 + \frac{ \eta}\lambda , \tau_0 + f_0\frac{ \eta}{\lambda} + \frac{ \tau}{\lambda^2}  ) \right) .
\]
Therefore, $f$ belongs to ${\Cow}$ 
with differential $\psi$
if and only if $f_{(\eta_0,\tau_0);\lambda}$ converge uniformly on compact sets to the function $(\eta,\tau)\mapsto\psi(\eta_0,\tau_0)\eta$, as $\lambda\to+\infty$, and the convergence is uniform on compact sets in $(\eta_0,\tau_0)$.
Notice that $\psi$ has to be continuous.

The \emph{intrinsic gradient} of a function $f:\R^2\to\R$ is the vector field on $\R^2$ defined as
\[
\grad^f := \de_\eta + f \de_\tau .
\]
We can express the intrinsic differentiability in terms of the differentiability of $f$ along the integral curves of $\grad^f$:
from \cite[Theorem 4.95]{MR3587666} we obtain the following characterisation, which justify the notation $\grad^ff$ for the differential $\psi$ of $f\in{\Cow}$.
\begin{lemma}\label{lem11190949}
	A continuous function $f:\R^2\to \R$ is in ${\Cow}$ with differential $\psi$ if and only if for every $p\in\R^2$ there exists a $\Co^2$-function $g_p:I\to\R$, where $I\subset\R$ is a neighbourhood of $0$, such that 
	\[
	\begin{cases}
	g_p(0) = 0, \\
	g_p'(t) = f(p+(t,g_p(t))) &\forall t\in I , \\
	g_p''(t) = \psi(p+(t,g_p(t))) &\forall t\in I .
	\end{cases}
	\]
\end{lemma}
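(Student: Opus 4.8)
The plan is to reduce the statement to the scaling characterisation of $\Cow$ established just above: $f\in\Cow$ with differential $\psi$ if and only if the rescaled functions $f_{(\eta_0,\tau_0);\lambda}$ converge, uniformly on compact sets and locally uniformly in the base point, to the linear function $(\eta,\tau)\mapsto\psi(\eta_0,\tau_0)\,\eta$ as $\lambda\to+\infty$. So the task is purely analytic: I want to show that this uniform-blow-up condition is equivalent to the existence of the $\Co^2$ integral curves $g_p$ with $g_p'=f$ and $g_p''=\psi$ along them. I would quote \cite[Theorem 4.95]{MR3587666} for the precise dictionary, but I would still want to record the mechanism so the notation $\grad^ff=\psi$ is transparent.

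First I would fix $p=(\eta_0,\tau_0)$ and observe that a curve of the form $t\mapsto p+(t,g(t))$ is, by definition, an integral curve of $\grad^f=\de_\eta+f\de_\tau$ precisely when $g'(t)=f(p+(t,g(t)))$; since $f$ is continuous, Peano's theorem gives existence of such a $\Co^1$ curve through every point on a neighbourhood $I$ of $0$, and $g(0)=0$ by construction. The content of the lemma is the upgrade from $\Co^1$ to $\Co^2$ together with the identification of the second derivative. For the direction ``$f\in\Cow\Rightarrow$ curves are $\Co^2$'': I would differentiate $g_p'(t)=f(p+(t,g_p(t)))$ formally to get $g_p''=\de_\eta f+g_p'\,\de_\tau f=\de_\eta f+f\,\de_\tau f=\grad^ff$ in the distributional sense, and then argue that the scaling convergence forces $\grad^ff=\psi$ continuously: writing $f(p+(t,g_p(t)))=f_0+\tfrac1\lambda f_{p;\lambda}(\lambda t,\ast)$ along the curve and using that $f_{p;\lambda}(\eta,\tau)\to\psi(p)\eta$ uniformly, a Taylor expansion of the rescaled curve identifies the quadratic coefficient as $\tfrac12\psi(p)$, i.e. $g_p''(0)=\psi(p)$; running the same argument at every point of the curve (with the local uniformity in the base point) gives $g_p''(t)=\psi(p+(t,g_p(t)))$ for all $t\in I$, and continuity of $g_p''$ follows from continuity of $\psi$ and of the flow.

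Conversely, assume the system has a solution $g_p$ for every $p$. I would then reconstruct the blow-up directly: parametrising $\Gamma_{f_{p;\lambda}}$ and using $g_p''=\psi$ along the integral curve, a second-order Taylor expansion of $g_p$ at $0$ gives $g_p(t)=\tfrac12\psi(p)t^2+o(t^2)$, and feeding this into the explicit formula
\[
f_{(\eta_0,\tau_0);\lambda}(\eta,\tau)=\lambda\Bigl(-f_0+f\bigl(\eta_0+\tfrac\eta\lambda,\ \tau_0+f_0\tfrac\eta\lambda+\tfrac\tau{\lambda^2}\bigr)\Bigr)
\]
one checks that the limit is $\psi(\eta_0,\tau_0)\,\eta$. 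The uniformity on compact sets in $(\eta,\tau)$ and local uniformity in $(\eta_0,\tau_0)$ come from the uniform continuity of $f$, $\psi$ and the solution map $p\mapsto g_p$ on compacta, the latter being a consequence of continuous dependence of ODE solutions on initial data (here the relevant equation for $g_p$, obtained by differentiating once more, is $g_p''=\psi(p+(\cdot,g_p))$ with continuous right-hand side, so Peano-type stability applies).

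The main obstacle is the regularity bookkeeping in the forward direction: $f$ is merely continuous, so $\de_\eta f$ and $\de_\tau f$ individually need not exist, and one cannot naively differentiate $g_p'(t)=f(p+(t,g_p(t)))$. The honest route is to avoid separating the derivative and instead extract $g_p''$ from the second-order behaviour of the blow-up limit, which is exactly why the scaling characterisation — rather than a pointwise chain rule — is the right tool; making the passage from ``quadratic coefficient of the limiting profile'' to ``$g_p''(t)$ at every $t$, continuously'' fully rigorous is where the real work sits, and it is precisely what \cite[Theorem 4.95]{MR3587666} packages, so in the write-up I would lean on that reference for the delicate estimates and keep the argument above at the level of motivation.
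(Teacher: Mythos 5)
Your approach matches the paper's: the paper gives no proof of this lemma at all, but presents it as a direct consequence of \cite[Theorem 4.95]{MR3587666}, and your proposal ultimately rests on the same citation while adding a correct sketch of the blow-up mechanism (Peano existence of the $\Co^1$ integral curves, identification of the quadratic coefficient of $g_p$ with $\tfrac12\psi(p)$ via the rescalings $f_{p;\lambda}$, and the converse Taylor-expansion argument). The one point your sketch underplays is that in the converse direction the single curve $g_p$ only controls $f_{p;\lambda}$ along a parabola, so the uniform convergence on full two-dimensional compact sets genuinely needs the integral curves through nearby base points; you gesture at this via continuous dependence on initial data, and in any case it is exactly the delicate part packaged by the cited theorem, so no correction is needed.
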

Note that $t\mapsto p+(t,g_p(t))$ is an integral curve of $\grad^f$ and that $g_p$ is not unique in general.
Another interpretation of these curves will be useful:
\begin{lemma}\label{lem11191340}
	Let $f\in{\Cow}$.
	A  curve $\gamma:I\to\R^2$ of class $\Co^1$, where $I\subset\R$ is an interval, is an integral curve of $\grad^f$ if and only if the curve $t\mapsto\gamma(t)*f(\gamma(t))\in\Gamma_f$ is a curve of class $\Co^1$ tangent to the horizontal bundle $H$.
\end{lemma}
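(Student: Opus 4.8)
The plan is to establish both implications through one explicit computation in exponential coordinates, the only genuine difficulty being a regularity point in the forward direction. Write $\gamma(t)=(\eta(t),\tau(t))$ and put $\phi(t):=f(\gamma(t))$. By the second description of $\Gamma_f$ in Section~\ref{subs11241752},
\[
c(t):=\gamma(t)*f(\gamma(t))=\bigl(\phi(t),\ \eta(t),\ \tau(t)-\tfrac12\eta(t)\phi(t)\bigr).
\]
Since $\eta$ and $\tau$ are of class $\Co^1$ and the last entry is polynomial in $\eta,\tau,\phi$, the curve $c$ is of class $\Co^1$ if and only if $\phi=f\circ\gamma$ is. Note also that $\gamma=\pi_X\circ c$ with $\pi_X$ smooth, so if $c\in\Co^1$ then automatically $\gamma\in\Co^1$.

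Next I would record the horizontality condition. A $\Co^1$ curve $c=(x,y,z)$ in $\HH$ is tangent to $H$ iff $c'=a\,X(c)+b\,Y(c)$ for some $a,b$; the first two components force $a=x'$, $b=y'$, so tangency to $H$ amounts to $z'=-\tfrac12 y\,x'+\tfrac12 x\,y'$ (the Pfaffian constraint dual to the contact form; a direct computation). Substituting $x=\phi$, $y=\eta$, $z=\tau-\tfrac12\eta\phi$ and simplifying — the two terms $\tfrac12\eta\phi'$ cancel — the constraint collapses to
\[
\tau'(t)=\phi(t)\,\eta'(t)=f(\gamma(t))\,\eta'(t),
\]
i.e.\ $\gamma'(t)=\eta'(t)\,\grad^f(\gamma(t))$ for every $t$. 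Since the $\de_\eta$-component of $\grad^f$ is identically $1$, this says exactly that $\gamma$ is an integral curve of $\grad^f$ in the standard normalization $\eta'\equiv1$ (a reparametrization of one in general). This already yields ``$\Leftarrow$'': if $c$ is $\Co^1$ and tangent to $H$, then $\phi=f\circ\gamma$ is $\Co^1$, the computation applies verbatim, and $\gamma$ is an integral curve of $\grad^f$. For ``$\Rightarrow$'', if $\gamma$ is an integral curve then $\tau'=f\circ\gamma$, so once we know $c\in\Co^1$ the same computation shows $c$ is tangent to $H$; horizontality is then free.

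Thus everything reduces to the regularity statement: \emph{if $\gamma$ is a $\Co^1$ integral curve of $\grad^f$, then $f\circ\gamma\in\Co^1$}, with $(f\circ\gamma)'=(\grad^f f)\circ\gamma$. This is the main obstacle, precisely because $\grad^f$ need not be Lipschitz and its integral curves need not be unique, so one cannot simply identify $\gamma$ with the distinguished curve produced by Lemma~\ref{lem11190949}. I would deduce it from Lemma~\ref{lem11190949} (equivalently, from the blow-up definition of $\Cow$): when $\gamma$ is one of the distinguished curves $t\mapsto p+(t,g_p(t))$ the claim is immediate, as then $f\circ\gamma=g_p'$ is $\Co^1$; for a general $\Co^1$ integral curve one fixes $t_1$, sets $p=\gamma(t_1)$, writes $\tau(t_1+h)=\tau(t_1)+f(p)h+o(h)$ (using $\tau\in\Co^1$, $\tau'=f\circ\gamma$), and compares the difference quotient of $f\circ\gamma$ at $t_1$ with the directional limit
\[
\lim_{h\to0}\frac{f(\eta(t_1)+h,\ \tau(t_1)+f(p)h)-f(p)}{h}=(\grad^f f)(p)
\]
coming from the $\{\tau=0\}$-slice of the definition of $\Cow$. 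Carrying out this comparison — absorbing the perturbation $o(h)$ in the second slot, which is delicate exactly because $f$ is merely continuous and where the uniformity on compact sets built into the definition of $\Cow$ is essential — is the crux of the proof. Once it is done, continuity of $\grad^f f$ upgrades differentiability to $f\circ\gamma\in\Co^1$, and both implications are complete.
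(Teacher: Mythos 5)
The paper states this lemma without proof, so there is nothing to compare against line by line; judged on its own terms, your reduction is correct and well organized: the coordinate computation showing that, for a $\Co^1$ lift, tangency to $H$ is exactly $\tau'=(f\circ\gamma)\,\eta'$ is right, and you correctly isolate the one nontrivial point, namely that $f\circ\gamma\in\Co^1$ with $(f\circ\gamma)'=(\grad^ff)\circ\gamma$ along \emph{every} $\Co^1$ integral curve, not merely along the distinguished curves of Lemma~\ref{lem11190949}. The gap is that this crux is only described, and the device you propose for it does not work as stated. In the blow-up definition of $\Cow$ the second slot scales like $\lambda^{-2}$: uniform convergence of $f_{(\eta_0,\tau_0);\lambda}$ on compact sets gives
\[
f\big(\eta_0+h,\ \tau_0+f_0h+\sigma h^2\big)=f_0+\grad^ff(p)\,h+o(h)
\]
only for $\sigma$ ranging in a \emph{bounded} set. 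A general integral curve satisfies $\tau(t_1+h)=\tau_0+f_0h+o(h)$ and nothing better, and an $o(h)$ perturbation of the second slot corresponds to $\sigma=o(h)/h^2$, which is unbounded. So the ``uniformity on compact sets built into the definition'' does not absorb the error; the step fails outright at this level of precision, and mere continuity of $f$ (or even the known local $\tfrac12$-H\"older continuity in $\tau$, which only gives $o(h^{1/2})$) is not enough to compare $f(\gamma(t_1+h))$ with $f$ at the distinguished curve.

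The step can be completed, but it needs an additional ingredient. One route: let $\Gamma_p(h)=p+(h,g_p(h))$ be the curve from Lemma~\ref{lem11190949}, set $u(h)=\tau(t_1+h)-\tau_0-g_p(h)$, and use the local estimate $|f(\eta,\tau)-f(\eta,\tau')|\le C|\tau-\tau'|^{1/2}$ (a standard property of $\Cow$ functions, not recorded in this paper) to get $|u'|\le C|u|^{1/2}$ with $u(0)=0$, hence $u(h)=O(h^2)$ by a Gronwall-type comparison. Only then is $\tau(t_1+h)=\tau_0+f_0h+O(h^2)$, so the blow-up limit applies with $\sigma$ in a compact set and yields $(f\circ\gamma)'(t_1)=\grad^ff(p)$. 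Alternatively one can cite the broad-solution characterization of $\Cow$ from the literature. It is worth noting that the paper only ever invokes the lemma for smooth $f$ or for the distinguished curves, where $f\circ\gamma=g_p'$ is $\Co^1$ by construction, so the hard case never actually arises there. A smaller point: your reverse implication only yields $\gamma'=\eta'\,\grad^f\!\circ\gamma$; constant curves show that $\eta'\equiv1$ is not automatic, so ``integral curve'' must be read up to the normalization you mention.
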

\begin{remark}\label{rem10261128}
	In \cite{MR2124590} it has been shown that there exists $f\in\Cow$ whose intrinsic graph $\Gamma_f$ has Euclidean Hausdorff dimension (seen as a subset of the Euclidean $\R^3$) strictly larger than two.
	It is possible to prove, for example using Lemma~\ref{lem10261002}, that $\Gamma_{f+1}$ does not have locally finite intrinsic perimeter and in particular $f+1\notin\Cow$.
	This shows that $\Cow$ is not a vector space.
\end{remark}

\subsection{Smooth approximation}

A sequence $\{f_k\}_{k\in\N}\subset\Cow$ converges to $f$ in $\Cow$ if $f_k$ and $\grad^{f_k}f_k$ converge to $f$ and $\grad^ff$ uniformly on compact sets.
The following lemma has been proven in \cite{MR2223801}.
\begin{lemma}\label{lem10241912}
	If $f\in\Cow$ 
	then there is a sequence of functions $\{f_k\}_{k\in\N}\subset\Co^\infty(\R^2)$ 
	that converges to $f$ in $\Cow$.
\end{lemma}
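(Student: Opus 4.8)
The proposal is to mollify $f$ not in the Euclidean sense but in coordinates adapted to the nonlinear operator $f\mapsto\grad^ff$. Plain mollification $f_\epsilon:=f*\rho_\epsilon$ does \emph{not} work, because
\[
\grad^{f_\epsilon}f_\epsilon=(\grad^ff)*\rho_\epsilon-\tfrac12\de_\tau E_\epsilon,\qquad E_\epsilon:=(f^2)*\rho_\epsilon-(f*\rho_\epsilon)^2=\tfrac12\!\int\!\!\int\rho_\epsilon(\cdot-y)\rho_\epsilon(\cdot-z)(f(y)-f(z))^2\dd y\dd z\ge0 ,
\]
and although $E_\epsilon\to0$ locally uniformly, its $\de_\tau$-derivative need not vanish in the limit when $f$ is merely continuous. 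Since convergence in $\Cow$ is just locally uniform convergence of $f$ and of $\grad^ff$, it is enough to produce, for each $\omega\ssubset\R^2$ and each $\delta>0$, a single $g\in\Co^\infty(\R^2)$ with $\sup_\omega|g-f|+\sup_\omega|\grad^gg-\grad^ff|<\delta$; letting $\omega$ exhaust $\R^2$ and $\delta\to0$ along a diagonal sequence then concludes, with no gluing of local pieces required.

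To build such a $g$ I would straighten $\grad^f$. By Lemma~\ref{lem11190949}, through each point of $\R^2$ runs a $\Co^2$ integral curve of $\grad^f=\de_\eta+f\de_\tau$, of the form $\eta\mapsto(\eta,u(\eta))$ with $u'(\eta)=f(\eta,u(\eta))$ and $u''(\eta)=(\grad^ff)(\eta,u(\eta))$; conversely $f$ is the common first derivative, and $\grad^ff$ the common second derivative, of these curves at a given point. Choosing such a curve $u_s$ through $(0,s)$ and setting $T(\eta,s):=(\eta,u_s(\eta))$, the field $\grad^f$ is $T$-related to $\de_\eta$, and the hypothesis $\grad^ff\in\Co^0$ says exactly that $\eta\mapsto f(T(\eta,s))=u_s'(\eta)$ is $\Co^1$ with continuous derivative $(\grad^ff)\circ T$. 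Mollifying the family $s\mapsto u_s$ in $s$, and (to gain smoothness in the evolution variable) in $\eta$ as well, produces a smoothly varying family of smooth curves; reading off their common first derivative yields a $g\in\Co^\infty$ having these curves as its integral curves, and since $u_s,u_s',u_s''$ are now convolutions of $f$ and $\grad^ff$ read in the straightened picture, one obtains $g\to f$ and $\grad^gg\to\grad^ff$ locally uniformly, which together with the reduction above gives the claim.

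The main obstacle is precisely the feature that distinguishes $\Cow$ from $\Co^1(\R^2)$: the integral curves of $\grad^f$ need not be unique, so $T$ need not be a homeomorphism and $s\mapsto u_s$ need not be better than a continuous (possibly multivalued) selection; after mollification the smoothed curves might still fail to foliate. The straightening and the mollification of the family must therefore be carried out with care -- for instance on open sets where a Lipschitz selection of characteristics exists, exhausting $\R^2$ by such sets, or by first perturbing $f$ into the uniqueness regime and controlling the error. An alternative route that sidesteps this uses the description of $\Gamma_f$ as a $\Co^1_\HH$-regular hypersurface: locally $\Gamma_f=\{G=0\}$ with $G\in\Co^1_\HH(\HH)$ and $XG\neq0$; the Heisenberg group convolution $G_\epsilon:=G*\rho_\epsilon$ is smooth with $XG_\epsilon=(XG)*\rho_\epsilon$ and $YG_\epsilon=(YG)*\rho_\epsilon$ by left-invariance, so $XG_\epsilon\neq0$ for small $\epsilon$ and $\{G_\epsilon=0\}$ is a smooth intrinsic graph $\Gamma_{f_\epsilon}$; continuity of $XG,YG$, the implicit function theorem, and the identity $\grad^ff=-YG/XG$ along the graph then give $f_\epsilon\to f$ and $\grad^{f_\epsilon}f_\epsilon\to\grad^ff$ locally uniformly. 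With this route the obstacle moves to producing the local defining function $G$ and to passing from the local statement back to the global one.
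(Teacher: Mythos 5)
First, a point of comparison: the paper does not prove this lemma at all --- it is quoted from \cite{MR2223801} --- so there is no internal argument to match. Your second route (realizing $\Gamma_f$ locally as a level set $\{G=0\}$ of a $\Co^1_{\HH}$ function with $XG\neq0$, regularizing $G$ by group convolution so that the horizontal derivatives commute with the mollification, and reading off $f_\epsilon$ and $\grad^{f_\epsilon}f_\epsilon=-YG_\epsilon/XG_\epsilon$ via the implicit function theorem) is essentially the strategy of that cited reference, and your opening computation showing why Euclidean mollification fails --- the variance term $E_\epsilon$ tends to zero while $\de_\tau E_\epsilon$ need not --- is correct and is exactly why the statement is non-trivial.

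As written, however, neither route closes. Route 1 founders on a point the paper itself illustrates: for $f\in\Cow$ the integral curves of $\grad^f$ are genuinely non-unique (see the example $f(\eta,\tau)=3\eta^2-3(\tau-\eta^3)^{2/3}$ in Section 3, where both a parabola and the cubic $s\mapsto(s,s^3)$ are characteristics through the same point), so the map $T(\eta,s)=(\eta,u_s(\eta))$ need not be injective and no continuous selection $s\mapsto u_s$ is available; and ``first perturbing $f$ into the uniqueness regime'' is essentially the content of the lemma, so invoking it is circular. Route 2 leaves two genuine gaps. (i) The defining function: the naive global choice $G(\xi,\eta,\tau)=\xi-f(\eta,\tau)$ in exponential coordinates of the second kind has $YG=-\de_\eta f-\xi\de_\tau f$, which off the graph is not controlled by the hypothesis $f\in\Cow$ (only the combination $\de_\eta f+f\de_\tau f$ is continuous), so $G$ need not be $\Co^1_{\HH}$ away from $\Gamma_f$; one only obtains local defining functions from the implicit function theorem for $\HH$-regular surfaces. (ii) Your reduction to a fixed $\omega\ssubset\R^2$ is valid, but it demands a single smooth $g$ accurate on all of $\omega$, whereas the level-set construction yields one smooth graph per local patch, and the mollified zero sets of different local $G$'s need not agree on overlaps; a gluing or covering argument is still required, so gluing has not in fact been avoided. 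Finally, mind the side of the group convolution: to have both $G_\epsilon\in\Co^\infty$ and $XG_\epsilon=\rho_\epsilon*(XG)$ for the left-invariant field $X$ one must convolve with the kernel on the left, $G_\epsilon=\rho_\epsilon*G$, not $G*\rho_\epsilon$. These are precisely the technical points that the citation to \cite{MR2223801} is meant to supply.
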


\subsection{Perimeter and Bernstein's Problem}\label{subs11241814}
The Lebesgue measure $\scr L^3$ in $\R^3$ is a Haar measure on $\HH$ in the exponential coordinates introduced Section \ref{subs11191130}. 
Notice that for any measurable set $E\subset\HH^1$ and any $\lambda>0$ it holds $\scr L^3(\delta_\lambda(E)) = \lambda^4 \scr L^3(E)$.

Let $\langle\cdot,\cdot \rangle$ be the left-invariant scalar product on the subbundle $H$ such that $(X,Y)$ is an orthonormal frame and set $\|v\| := \sqrt{\langle v,v \rangle}$ for $v\in H$.
The \emph{sub-Riemannian perimeter} of a measurable set $E\subset\HH^1$ in an open set $\Omega$ is
\[
P(E;\Omega) := \sup\left\{
\int_E \div V \dd\scr L^3:\ V\in\Gamma(H),\ {\rm spt}(V)\ssubset \Omega,\ \|V\|\le 1
\right\},
\]
where $\Gamma(H)$ contains all the smooth sections of the horizontal subbundle and 
$\div V$ is the divergence of vector fields on $\R^3$.
One can show that, for every $V_1,V_2\in\Co^\infty(\R^3)$,
\[
\div(V_1 X+V_2 Y) = XV_1 + YV_2 .
\]
A set $E$ has \emph{locally finite perimeter} if $P(E;\Omega)<\infty$ for all $\Omega\subset\HH$ open and bounded.
If $E$ has locally finite perimeter, the function $\Omega\mapsto P(E;\Omega)$ induces a Radon measure $|\de E|$ on $\HH^1$, which is concentrated on the so-called \emph{reduced boundary} $\de^*E\subset \de E$.
Moreover, up to a set of $|\de E|$-measure zero and a rotation around the $z$-axis, $\de^*E$ is the countable union of intrinsic graphs of $\Cow$ functions. 
See \cite{MR1871966} and \cite{MR2313532} for further reading.

A measurable set $E$ has \emph{minimal perimeter} if, for every bounded open set $\Omega\subset\HH^1$ and every measurable set $F\subset\HH^1$ with symmetric difference $E\Delta F\ssubset \Omega$, we have
\[
P(E;\Omega) \le P(F;\Omega) .
\]
In this case, the reduced boundary $\de^*E$ of $E$ is called \emph{area-minimizing surface}.
We are interested in area minimizers that are global intrinsic graphs.
\begin{Coj}[Bernstein's Problem]
	If $f\in{\Cow}$ is such that $\Gamma_f$ is an area-minimizing surface, then $\Gamma_f$ is a vertical plane up to left-translations.
\end{Coj}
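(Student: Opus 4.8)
Since this is the (open) Bernstein problem in $\HH$, what follows is a strategy rather than a complete argument. Indeed, Theorem~\ref{teo10261843}, proved below, shows that a naive variational approach cannot settle it, and that obstruction lies at the heart of the difficulty.

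The plan is first to extract as much structure as possible from a hypothetical minimizer $f\in\Cow$. An area minimizer is area-stationary, so --- working with the Lagrangian and contact first variations of Sections~\ref{sec11251132}, \ref{sec11241822} and~\ref{sec10262029}, since vertical perturbations $f+\epsilon\phi$ may leave $\Cow$ and make the area infinite --- one expects $f$ to solve $\grad^f\grad^ff=0$ in the Lagrangian sense. By the analysis of Section~\ref{sec10262025}, this forces the integral curves of $\grad^f=\de_\eta+f\de_\tau$ to be parabolas, equivalently $\Gamma_f$ to be ruled by horizontal straight lines. Thus the problem reduces to showing that, among intrinsic graphs ruled by horizontal lines, only the vertical planes $\bb W_\theta$ can be area-minimizing.

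To close this reduction one must exploit the minimizing property globally, since stationarity and stability are not enough. I would try two complementary routes. The first is a blow-down argument: prove a monotonicity formula for the intrinsic-area density of a minimizer on metric balls, blow $\Gamma_f$ down along the dilations $\delta_{1/R}$ as $R\to\infty$ to obtain a $\delta_\lambda$-homogeneous area-minimizing intrinsic graph, classify the latter as vertical planes, and invoke the equality (rigidity) case of the monotonicity formula to identify $\Gamma_f$ with its blow-down. The second is more hands-on: parametrise $\Gamma_f$ by its ruling, so that $f$ is encoded by a single scalar function $c(\cdot)$ recording the curvature of the parabola through a point, rewrite $\scr A(\Gamma_f\cap\Omega_\omega)=\int_\omega\sqrt{1+(\grad^ff)^2}\dd\eta\dd\tau$ in these coordinates, and construct explicit compactly supported competitors that strictly decrease the area unless $c\equiv 0$; and $c\equiv 0$ forces $f$ to be affine, hence $\Gamma_f$ a vertical plane up to a left translation.

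The main obstacle --- and the reason the conjecture is still open --- is that both routes collide with the low regularity of $\Cow$ and with the phenomenon isolated in this paper. On one hand, an a priori monotonicity formula and a classification of homogeneous minimizers are out of reach with merely $\Cow$-regularity: the blow-down need not be a graph, nor obviously planar. On the other hand, Theorem~\ref{teo10261843} shows that non-minimality of a ruled graph cannot be detected from its first and second contact variations, so a genuinely global comparison is unavoidable, and producing admissible competitors --- staying in $\Cow$, equivalently of locally finite intrinsic perimeter (cf.\ Remark~\ref{rem10261128}), agreeing with $f$ off a compact set, and strictly lowering the area of a non-planar ruled graph --- is the crux. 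A partial result should nonetheless be within reach: assuming $f\in\Co^2(\R^2)$, or a Lipschitz bound on $\grad^ff$, the reduction above together with the second-variation formula of Section~\ref{sec10262031} and a calibration built from the family of vertical left-translates $\{(0,0,s)*\Gamma_f\}_{s\in\R}$ of $\Gamma_f$ should yield the conjecture in that regularity class (cf.\ the references cited in the introduction).
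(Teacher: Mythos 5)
There is nothing in the paper to compare your attempt against: the statement is labelled as a \emph{conjecture} and is explicitly left open -- the paper only records that it is known for $f\in\Co^1(\R^2)$ by \cite{MR3406514} and that a counterexample exists for $f\in\Co^0(\R^2)\setminus\Cow$. You correctly recognise this and offer a strategy rather than a proof, which is the honest thing to do. So the only substantive question is whether your sketch is sound, and there are two concrete problems with it.

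First, your reduction step is not justified. You assert that area-stationarity under contact variations forces $\grad^f\grad^ff=0$ in the Lagrangian sense. But the first contact variation does not produce the minimal surface equation $\grad^f\bigl(\psi/\sqrt{1+\psi^2}\bigr)=0$: by Proposition~\ref{prop11170020} it produces only the strictly weaker third-order condition \eqref{eq10151603}, namely $(\grad^f+2\de_\tau f)\grad^f\bigl(\psi/\sqrt{1+\psi^2}\bigr)=0$. The implication ``minimizer $\Rightarrow$ ruled by horizontal lines'' is a theorem of Galli--Ritor\'e for $f\in\Co^1(\R^2)$, proved by exploiting minimality globally, not by first-variation bookkeeping; for general $f\in\Cow$ it is itself open. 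Second, the calibration you propose at the end points in the wrong direction. A foliation of $\HH$ by the vertical translates $(0,0,s)*\Gamma_f$ of a surface satisfying the minimal surface equation would, if the usual calibration argument applied, prove that $\Gamma_f$ \emph{is} area-minimizing -- i.e., it would refute the conjecture rather than establish it. The whole point of this paper (Theorem~\ref{teo10261843}, and the graphical strips of \cite{MR2472175}) is that smooth non-planar solutions of $\Delta^ff=0$ are stationary and stable yet \emph{not} minimizing, so any calibration-type argument must break down here; and the $\Co^1$ case you hope to recover this way is already known from \cite{MR3406514} by different methods. Your blow-down route is a reasonable heuristic, but as you yourself note it founders on the lack of a monotonicity formula and of a classification of homogeneous minimizers at $\Cow$ regularity.
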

Such conjecture has been proven in the case $f\in\Co^1(\R^2)$ in \cite{MR3406514}, while it has been presented a counterexample in \cite{MR2455341} with $f\in\Co^0(\R^2)\setminus {\Cow}$.

For an open domain $\omega\subset\R^2$, set 
\[
\Omega_\omega:=\{(0,\eta,\tau)*(\xi,0,0):(\eta,\tau)\in\omega,\ \xi\in\R\}.
\]
If $f\in{\Cow}$ and $E_f=\{(0,\eta,\tau)*(\xi,0,0)\in\R^2,\ \xi\le f(\eta,\tau)\}$, then
\[
P(E_f;\Omega_\omega) = \int_\omega \sqrt{1+(\grad^ff)^2} \dd\eta\dd\tau .
\]
If $E_f$ has minimal perimeter, then, for every $g\in{\Cow}$ with $\{f\neq g\}\ssubset \omega$, it holds
\begin{equation*}
\int_\omega \sqrt{1+(\grad^ff)^2} \dd\eta\dd\tau \le \int_\omega \sqrt{1+(\grad^gg)^2} \dd\eta\dd\tau .
\end{equation*}
It is not known whether the converse implication holds.

\section{Lagrangian solutions to $\Delta^ff=0$}\label{sec10262025}
For $f\in\Cow$ and $v\in\Co^2(\R^2)$, we define the differential operator
\begin{equation}\label{eq10261049}
\Delta^fv := \de_\eta^2v+2f\de_\eta\de_\tau v + f^2\de_\tau^2v + \grad^ff \de_\tau v .
\end{equation}
Notice that, if $f\in\Co^2(\R^2)$, then
\begin{equation*}
 	\Delta^fv = \grad^f(\grad^fv) .
\end{equation*}
The next lemma will be a fundamental tool for extending some results beyond the smooth case via approximation. 
The proof trivially follows from the explicit expressions of the differential operators $\grad^f$ and $\Delta^f$.
\begin{lemma}\label{lem10241748}
	If $\{f_k\}_{k\in\N}\subset\Cow$ and $\{v_k\}_{k\in\N}\subset\Co^2(\R^2)$ are sequences converging to $f$ and $v$ in their respective spaces, then the sequences $\{\grad^{f_k}v_k\}_{k\in\N}$ and $\{\Delta^{f_k}v_k\}_{k\in\N}$ converge to $\grad^fv$ and $\Delta^fv$ uniformly on compact sets.
\end{lemma}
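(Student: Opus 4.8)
The plan is to reduce everything to the explicit coordinate formulas for $\grad^f$ and $\Delta^f$ together with the elementary fact that products and sums of uniformly-on-compacts convergent sequences of continuous functions converge uniformly on compacts, provided the limits are continuous (which here they are). So first I would recall that, by definition, $f_k\to f$ in $\Cow$ means $f_k\to f$ and $\grad^{f_k}f_k\to\grad^ff$ locally uniformly, and $v_k\to v$ in $\Co^2(\R^2)$ means $v_k,\de_\eta v_k,\de_\tau v_k,\de_\eta^2 v_k,\de_\eta\de_\tau v_k,\de_\tau^2 v_k$ all converge locally uniformly to the corresponding derivatives of $v$.

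Next, for the intrinsic gradient I would simply write $\grad^{f_k}v_k=\de_\eta v_k+f_k\,\de_\tau v_k$ and observe that $\de_\eta v_k\to\de_\eta v$ and $\de_\tau v_k\to\de_\tau v$ locally uniformly, while $f_k\to f$ locally uniformly; since on any fixed compact set $K$ the sequences $f_k$ and $\de_\tau v_k$ are uniformly bounded (being uniformly convergent), the product $f_k\,\de_\tau v_k$ converges to $f\,\de_\tau v$ uniformly on $K$, and hence so does the sum. For $\Delta^{f_k}v_k$ I would do the same term by term on the expression \eqref{eq10261049}: each of $\de_\eta^2 v_k$, $2f_k\,\de_\eta\de_\tau v_k$, $f_k^2\,\de_\tau^2 v_k$, and $(\grad^{f_k}f_k)\,\de_\tau v_k$ converges locally uniformly to the corresponding term, using again that local uniform convergence is preserved under multiplication of locally bounded sequences, and for the last term using precisely the hypothesis $\grad^{f_k}f_k\to\grad^ff$ locally uniformly. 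Summing the four terms gives $\Delta^{f_k}v_k\to\Delta^fv$ locally uniformly.

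There is really no serious obstacle here: the only point that deserves a word is that $\Cow$-convergence is defined exactly so as to include the convergence of $\grad^{f_k}f_k$, which is what makes the last summand of $\Delta^{f_k}v_k$ behave; without that one could not control the term $\grad^ff\,\de_\tau v$. Everything else is the standard algebra of uniform convergence on compact sets. I would keep the write-up to a couple of lines, as the statement already advertises that ``the proof trivially follows'' from the explicit expressions.
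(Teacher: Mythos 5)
Your proof is correct and follows exactly the route the paper intends: the paper gives no written proof beyond remarking that the statement ``trivially follows from the explicit expressions of the differential operators $\grad^f$ and $\Delta^f$,'' and your term-by-term argument using the coordinate formulas and the standard algebra of locally uniform convergence is precisely that. Your observation that the definition of $\Cow$-convergence is what controls the summand $(\grad^{f_k}f_k)\,\de_\tau v_k$ is the one point worth recording.
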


If $f\in\Co^2(\R^2)$ is such that $\Gamma_f$ is a minimal surface in $\HH$, then one  shows that $f$ satisfies the differential equation (see \cite{MR2333095})
\begin{equation}\label{eq10241555}
 \grad^f\left( \frac{ \grad^ff }{ \sqrt{1+(\grad^ff)^2} }\right) = 0 .
\end{equation}
Equation \eqref{eq10241555} is equivalent, for $f\in\Co^2(\R^2)$, to 
\begin{equation}\label{eq10241526}
 	\Delta^ff=0 .
\end{equation}
For a generic $f\in\Cow$, equation \eqref{eq10241526} has not the classical interpretation~\eqref{eq10261049}.
However, using a ``Lagrangian interpretation'' of $\grad^f(\grad^ff)=0$, we give the following definition:
\begin{definition}\label{def08101850}
	A function $f\in\Cow$ satisfies \emph{$\Delta^ff=0$ in weak Lagrangian sense}, if for every $p\in\R^2$ there is an integral curve $\gamma$ of $\grad^f$ passing through $p$ such that $ \grad^ff$ is constant along $\gamma$.
\end{definition}
If $f\in\Co^2(\R^2)$ then $\Delta^ff = \grad^f(\grad^ff)=0$ if and only if $\grad^ff$ is constant along \emph{all} integral curves of $\grad^f$, i.e., $\Delta^ff=0$ holds in a \emph{strong} Lagrangian sense, see Remark~\ref{rem08140839}.

Lemma~\ref{lem11191504} will characterize such functions by the integral curves of $\grad^f$. 

\begin{lemma}\label{lem10251359}
	Let $A,B\in\Co^0(\R)$.
	The map $\R^2\to\R^2$ given by
	\[
	\Psi:(t,\zeta)\mapsto \left(t,\frac{A(\zeta)}{2} t^2 + B(\zeta) t + \zeta \right)
	\]
	is a homeomorphism if and only if
	\begin{enumerate}
	\item
	For all $\zeta,\zeta'\in\R$ 
		\begin{enumerate}
		\item[(1a)] 	either $A(\zeta)=A(\zeta')$ and $B(\zeta) = B(\zeta')$,
		\item[(1b)] 	or $2 \big(A(\zeta)-A(\zeta')\big) (\zeta-\zeta') > \big(B(\zeta)-B(\zeta')\big)^2 $.
		\end{enumerate}
	\item If there exists $\zeta_0\in\R$ such that $A(\zeta_0)>0$, then 
		\[
		\limsup_{\zeta\to\infty} \left(\zeta-\frac{B(\zeta)^2}{2A(\zeta)}\right) = +\infty .
		\]
	\item If there exists $\zeta_0\in\R$ such that $A(\zeta_0)<0$, then 
		\[
		\liminf_{\zeta\to-\infty} \left(\zeta-\frac{B(\zeta)^2}{2A(\zeta)}\right) = -\infty .
		\]
	\end{enumerate}
\end{lemma}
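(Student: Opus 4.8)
The plan is to exploit that $\Psi$ has the ``triangular'' form $\Psi(t,\zeta)=(t,F_t(\zeta))$, where for each fixed $t\in\R$ we set
\[
F_t(\zeta):=\tfrac{A(\zeta)}{2}\,t^2+B(\zeta)\,t+\zeta .
\]
Since $\Psi$ is continuous and preserves the first coordinate, it is a homeomorphism of $\R^2$ onto itself if and only if it is a bijection --- a continuous injection $\R^2\to\R^2$ is automatically an open map, by invariance of domain --- and $\Psi$ is a bijection if and only if $F_t\colon\R\to\R$ is a bijection for every $t$. So the statement reduces to characterising when \emph{every} $F_t$ is a continuous bijection of $\R$, equivalently (continuity being automatic) a strictly monotone surjection.

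The first step is injectivity. For $\zeta\neq\zeta'$, the equation $F_t(\zeta)=F_t(\zeta')$ is
\[
\tfrac{A(\zeta)-A(\zeta')}{2}\,t^2+\bigl(B(\zeta)-B(\zeta')\bigr)\,t+(\zeta-\zeta')=0 ,
\]
a polynomial in $t$ of degree at most two. A short case analysis --- leading coefficient zero or not, and in the second case the sign of the discriminant $(B(\zeta)-B(\zeta'))^2-2(A(\zeta)-A(\zeta'))(\zeta-\zeta')$ --- shows that this has no real solution $t$ exactly when (1a) or (1b) holds for $(\zeta,\zeta')$. Hence all $F_t$ are injective if and only if (1) holds. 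Granting (1), the continuous function $t\mapsto F_t(\zeta)-F_t(\zeta')$ never vanishes and equals $\zeta-\zeta'$ at $t=0$, so it has constant sign; therefore $F_t$ is strictly increasing for every $t$, and each $F_t$ is a bijection if and only if $\lim_{\zeta\to+\infty}F_t(\zeta)=+\infty$ and $\lim_{\zeta\to-\infty}F_t(\zeta)=-\infty$. It remains to show these two limit conditions hold for all $t$ precisely when (2) and (3) hold.

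Next I would record what (1) implies: $A$ is nondecreasing on $\R$, and (1b) forces $B$ to be locally $\tfrac12$-Hölder with a constant controlled by the increments of $A$; in particular $B(\zeta)=O(\sqrt{|\zeta|})$ whenever $A$ is bounded on the relevant half-line. On the set $\{A>0\}$ --- a right half-line, since $A$ is nondecreasing --- completing the square gives
\[
F_t(\zeta)=\tfrac{A(\zeta)}{2}\Bigl(t+\tfrac{B(\zeta)}{A(\zeta)}\Bigr)^2+g(\zeta),\qquad g(\zeta):=\zeta-\tfrac{B(\zeta)^2}{2A(\zeta)},
\]
so $F_t(\zeta)\ge g(\zeta)$ there; symmetrically $F_t(\zeta)\le g(\zeta)$ on the left half-line $\{A<0\}$; and $g$ is strictly increasing on each of these half-lines. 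For the limit at $+\infty$ (the limit at $-\infty$ being entirely symmetric and leading to (3)): if $\sup A<+\infty$ then $A$ is bounded on a right half-line, so $F_t(\zeta)=\zeta+O(\sqrt{|\zeta|})+O(1)\to+\infty$ for every $t$, and, if moreover $\sup A>0$, taking the reference point of the Hölder estimate far out (where the remaining increments of $A$ are arbitrarily small) yields $g(\zeta)\ge\tfrac12\zeta-C\to+\infty$, so (2) holds automatically. If instead $\sup A=+\infty$, then $F_t(\zeta)\ge g(\zeta)$ on a right half-line, so (2) --- which now just says $g(\zeta)\to+\infty$ --- immediately gives $F_t(\zeta)\to+\infty$ for every $t$.

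The remaining direction in this last, critical regime is the step I expect to be the real obstacle: if $\sup A=+\infty$ and (2) fails, i.e.\ (as $g$ is monotone) $g$ is bounded above on the right half-line $\{A>0\}$, one must exhibit a single value $t_*$ for which $F_{t_*}$ is bounded above, hence fails to be surjective. The mechanism should be that boundedness of $g=\zeta-\tfrac{B^2}{2A}$ together with the inequality (1b) is very rigid and pins down the parabola vertices $m(\zeta):=-B(\zeta)/A(\zeta)$: it should force $m(\zeta)$ to converge to some $m_*$ at a rate making $A(\zeta)\bigl(m_*-m(\zeta)\bigr)^2$ bounded. Then $t_*=m_*$ works, since
\[
F_{m_*}(\zeta)=\tfrac{A(\zeta)}{2}\bigl(m_*-m(\zeta)\bigr)^2+g(\zeta)
\]
stays bounded as $\zeta\to+\infty$, and $F_{m_*}$ is increasing, so it is bounded above on all of $\R$. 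Making this rigidity quantitative --- deriving the convergence of $m(\zeta)$ and its rate purely from (1b) and the upper bound on $g$, with no differentiability of $A,B$ (presumably by feeding well-chosen triples of parameters into (1b) and bootstrapping) --- is the technical heart of the argument; the mirror statement at $-\infty$ yields the necessity of (3). Assembling the pieces: $\Psi$ is a homeomorphism $\iff$ every $F_t$ is a bijection $\iff$ (1), (2) and (3) all hold.
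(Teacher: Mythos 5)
Your reduction to the one--variable maps $F_t$, the injectivity analysis via the discriminant of the quadratic in $t$, and the sufficiency of (2)--(3) (splitting on whether $A$ stays bounded and using the H\"older-type control $|B(\zeta)-B(\zeta')|\le\sqrt{2(A(\zeta)-A(\zeta'))(\zeta-\zeta')}$ coming from (1b)) are all sound and essentially parallel to the paper's argument. The genuine gap is exactly where you flag it: the \emph{necessity} of (2) when $\sup A=+\infty$. You must show that if $A(\zeta_0)>0$ and $g(\zeta):=\zeta-\tfrac{B(\zeta)^2}{2A(\zeta)}$ stays bounded above, then some single $F_{t_*}$ is bounded above; your proposed mechanism (convergence of the vertices $m(\zeta)=-B(\zeta)/A(\zeta)$ to some $m_*$ at a rate making $A(\zeta)(m_*-m(\zeta))^2$ bounded) is stated only as what ``should'' happen, with no proof, and it is not obvious how to extract such a quantitative rate from (1b) alone for merely continuous $A,B$. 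As written, the proof of the ``homeomorphism $\Rightarrow$ (2)'' implication is therefore incomplete.

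Two complete ways to close it. The paper argues topologically: if $\Psi$ is surjective (hence a homeomorphism), the open sets $U_\zeta=\Psi(\R\times(-\infty,\zeta))=\{(\eta,\tau):\tau<g(\eta,\zeta)\}$ cover $\R^2$; since $A(\zeta_0)>0$, the region $K_M=\{(\eta,\tau):g(\eta,\zeta_0)\le\tau\le M\}$ is compact, so $K_M\subset U_{\zeta_1}$ for a single $\zeta_1$, which forces $\inf_t g(t,\zeta)\ge M$ for all $\zeta\ge\zeta_1$ and hence (2). Alternatively, within your own framework you can avoid the vertex analysis entirely: if $g(\zeta)\le M_0$ on the half-line $\{A>0\}$, the sublevel sets $I_\zeta=\{t:g(t,\zeta)\le M_0+1\}$ are nonempty compact intervals, and they are nested ($I_{\zeta'}\subset I_\zeta$ for $\zeta'>\zeta$, by the monotonicity $g(t,\zeta')>g(t,\zeta)$), so any $t_*\in\bigcap_\zeta I_\zeta$ gives $F_{t_*}$ bounded above by $M_0+1$, contradicting surjectivity. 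Either route replaces the unproved rigidity claim with a compactness argument.
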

\begin{proof}
	Define $ g (t,\zeta) = \frac{A(\zeta)}{2} t^2 + B(\zeta) t + \zeta$, so that $\Psi(t,\zeta)=\left(t, g (t,\zeta)\right)$.
	We first show that $\Psi$ is injective if and only if property $(1)$ holds.
	For $\zeta'>\zeta$, define the quadratic polynomial
	\[
	Q_{\zeta',\zeta}(t)= g (t,\zeta')- g (t,\zeta)
	= \frac{A(\zeta')-A(\zeta)}{2} t^2 + (B(\zeta')-B(\zeta)) t + (\zeta'-\zeta) .
	\]
	The map $\Psi$ is injective if and only if for all $\zeta',\zeta\in\R$ with $\zeta'>\zeta$ the polynomial $Q_{\zeta',\zeta}$ has no zeros.
	If $A(\zeta')=A(\zeta)$, then $Q_{\zeta',\zeta}$ is in fact linear, thus it has no zeros if and only if $B(\zeta')=B(\zeta)$ and we obtain property~(1a).
	If $A(\zeta')\neq A(\zeta)$, then $Q_{\zeta',\zeta}$ has no zeros if and only if its discriminant is strictly negative, i.e., property~(1b) holds.
	
	Next, we assume that $\Psi$ is injective, i.e., that property (1) holds, and we will show that $\Psi$ is surjective if and only if properties (2) and (3) hold.
	By the Invariance of Domain Theorem, the fact that $\Psi$ is surjective is equivalent to $\Psi$ being a homeomorphism.
	Notice that, since $Q_{\zeta',\zeta}(0)=\zeta'-\zeta>0$ for all $\zeta'>\zeta$, we have
	\begin{equation}\label{eq07172015}
	\zeta'>\zeta 
	\quad\THEN\quad
	\forall t\in\R\quad g (t,\zeta')> g (t,\zeta) .
	\end{equation}
	
	Suppose that $\Psi$ is surjective, hence a homeomorphism.
	Suppose $\zeta_0\in\R$ is such that $A(\zeta_0)>0$.
	By~$(1)$ we have that $A$ is monotone increasing, therefore $A(\zeta)>0$ for all $\zeta\ge\zeta_0$.
	It follows that if $\zeta\ge\zeta_0$ then
	\[
	\zeta-\frac{B(\zeta)^2}{2A(\zeta)} = \inf_{t\in\R}  g (t,\zeta).
	\]
	For $M\in\R$ define $K_M=\{(\eta,\tau)\in\R^2: g (\eta,\zeta_0)\le \tau\le M\}$.
	Since $A(\zeta_0)>0$, the set $K_M$ is compact (possibly empty) for all $M\in\R$.
	Next, for $\zeta\in\R$ define $U_\zeta=\Psi(\R\times(-\infty,\zeta))=\{(\eta,\tau):\tau<g(\eta,\zeta)\}$. 
	Since $\Psi$ is surjective, the open sets $U_\zeta$ cover $\R^2$.
	Hence, there is $\zeta_1\ge\zeta_0$ such that $K_M\subset U_{\zeta_1}$.
	Using~\eqref{eq07172015}, we obtain
	\[
	\forall \zeta\ge\zeta_1\quad
	\inf_{t\in\R}  g (t,\zeta) \ge M.
	\]
	Since $M$ is arbitrary, we have proven~(2).
	Property~(3) is proven with a similar argument.
	
	Now we prove the converse implication. 
	Suppose that $A$ and $B$ satisfy properties (2) and (3). 
	In order to prove that $\Psi$ is surjective, we need only to prove that $\lim_{\zeta\to\infty} g (t,\zeta)=+\infty$ and $\lim_{\zeta\to-\infty} g (t,\zeta)=-\infty$, for every $t\in\R$.
	
	If $A(\zeta)=A(0)$ for all $\zeta\ge0$, then $ g (t,\zeta)= g (t,0)+\zeta$ and thus $\lim_{\zeta\to\infty} g (t,\zeta)=+\infty$.
	If $A(\zeta)\le0$ for all $\zeta\in\R$, then there is $C>0$ such that $0\le A(\zeta)-A(0) \le C$ for all $\zeta>0$.
	We may suppose $A(\zeta)>A(0)$ for $\zeta$ large enough.
	Thus, using $(1b)$, 
	\begin{align*}
	 g (t,\zeta) &\ge \frac{A(0)}{2} t^2 + B(0) t + \zeta + (B(\zeta)-B(0)) t \\
	&\ge  \frac{A(0)}{2} t^2 + B(0) t + \zeta - |t| \sqrt{2(A(\zeta)-A(0))\zeta}  \\
	&\ge  \frac{A(0)}{2} t^2 + B(0) t + \zeta - |t|\sqrt{2C} \sqrt{\zeta}  .
	\end{align*}
	The limit $\lim_{\zeta\to\infty} g (t,\zeta)=+\infty$ follows.
	Finally, if $A(\zeta_0)>0$ for some $\zeta_0\in\R$, then for all $\zeta\ge\zeta_0$ we have
	$\inf_{t\in\R} g (t,\zeta) = \zeta-\frac{B(\zeta)^2}{2A(\zeta)}$.
	Property (2) implies  that $\lim_{\zeta\to\infty} g (t,\zeta)=+\infty$.
	
	The limit $\lim_{\zeta\to-\infty} g (t,\zeta)=-\infty$ is deduced similarly from (3).
\end{proof}
\begin{remark}\label{rem08101832}
	If $A,B\in\Co(\R)$ satisfy properties (1), (2) and (3) of the previous Lemma~\ref{lem10251359}, then the function $f$ defined by $f(\Psi(t,\zeta)) = \de_tg(t,\zeta) = A(\zeta) t + B(\zeta)$ belongs to $\Cow$ by Lemma~\ref{lem11190949}.
	Moreover, the curves $t\mapsto g(t,\zeta)$ are integral curves of $\grad^f$ along which $\grad^ff(\Psi(t,\zeta)) = \de_t^2g(t,\zeta) = A(\zeta)$ is constant.
	So, $\Delta^ff=0$ in weak Lagrangian sense.
	The graphs of these functions are examples of ``graphical strips'' as introduced in \cite{MR2472175}.
	For example, for any $A\in\Co^0(\R)$ non-decreasing, we can define $g(t,\zeta):=A(\zeta)t^2+\zeta$ and we obtain a well defined $f\in{\Cow}$ with $\Delta^ff=0$ given by
	\[
	f(t,A(\zeta)t^2+\zeta) = 2A(\zeta)t .
	\]
	The converse also holds, as the next lemma shows.
\end{remark}
\begin{lemma}\label{lem11191504}
	Let $f\in{\Cow}$ satisfying $\Delta^ff=0$ in weak Lagrangian sense.
	Then the curves $t\mapsto(t,g(t,\zeta))$, where $\zeta\in\R$ and
	\begin{equation}\label{eq08101341}
	g(t,\zeta) = \frac{\grad^ff(0,\zeta)}{2} t^2 + f(0,\zeta) t + \zeta ,
	\end{equation}
	are the integral curves of $\grad^f$ along which $\grad^ff$ is constant.
	Moreover, the functions $\zeta\mapsto \grad^ff(0,\zeta)$ and $\zeta\mapsto f(0,\zeta)$ satisfy the conditions (1), (2) and (3) in Lemma~\ref{lem10251359}.
	In particular, $\tau\mapsto \grad^ff(\eta,\tau)$ is non-decreasing, for all $\eta\in\R$.
\end{lemma}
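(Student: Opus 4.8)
The plan is to prove that the map $\Psi(t,\zeta):=\bigl(t,g(t,\zeta)\bigr)$, with $g$ as in \eqref{eq08101341}, is a homeomorphism of $\R^2$ whose fibres $t\mapsto\Psi(t,\zeta)$ are exactly the integral curves of $\grad^f$ along which $\grad^ff$ is constant. Conditions~(1)--(3) will then be immediate from Lemma~\ref{lem10251359} applied to $A(\zeta):=\grad^ff(0,\zeta)$ and $B(\zeta):=f(0,\zeta)$ (which are continuous because $f$ and $\grad^ff$ are), and the monotonicity of $\tau\mapsto\grad^ff(\eta,\tau)$ will follow by inverting the fibres.

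\emph{Local shape and uniqueness of the special curves.} Fix $p_0=(\eta_0,\tau_0)\in\R^2$. The weak Lagrangian hypothesis yields an integral curve $\gamma$ of $\grad^f$ through $p_0$ along which $\grad^ff$ is constant; reparametrising by the first coordinate, $\gamma(t)=\bigl(\eta_0+t,\,\tau_0+h(t)\bigr)$ on a neighbourhood $I$ of $0$, with $h(0)=0$ and $h'(t)=f(\gamma(t))$. By Lemma~\ref{lem11191340} the lift $t\mapsto\gamma(t)*f(\gamma(t))\in\Gamma_f$ is of class $\Co^1$; since its first coordinate is $f\circ\gamma=h'$, we get $h'\in\Co^1$, hence $h\in\Co^2(I)$, and since $f$ is intrinsically $\Co^1$ with differential $\grad^ff$ (Lemma~\ref{lem11190949} and the blow-up characterisation recalled before it) one has $h''=\grad^ff\circ\gamma$. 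The hypothesis forces this to be the constant $\grad^ff(p_0)$, so Taylor's formula gives $h(t)=\tfrac12\grad^ff(p_0)t^2+f(p_0)t$; for $p_0=(0,\zeta)$ this reads $\gamma(t)=\bigl(t,g(t,\zeta)\bigr)$ on $I$. The same argument shows that any two integral curves of $\grad^f$ with constant $\grad^ff$ passing through a common point $q$ coincide, being the unique quadratic determined at $q$ by its value, its first derivative $f(q)$, and its second derivative $\grad^ff(q)$.

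\emph{Globalisation.} Fix $\zeta$, and let $(t_-,t_+)$ be the union of the domains of all integral curves $\gamma$ of $\grad^f$ with $\gamma(0)=(0,\zeta)$ and $\grad^ff\circ\gamma$ constant; by the previous step these all agree with $t\mapsto(t,g(t,\zeta))$, so they glue into a single such curve on $(t_-,t_+)$. Suppose $t_+<\infty$. As $t\to t_+^-$ the polynomial curve $t\mapsto(t,g(t,\zeta))$ converges to $q:=(t_+,g(t_+,\zeta))$ and $\grad^ff$ converges to $\grad^ff(q)$, so the constant value along $\gamma$ equals $\grad^ff(q)$. Applying the hypothesis at $q$ and concatenating the resulting curve (shifted so as to reach $q$ at parameter $t_+$) with $\gamma$ produces a curve that is $\Co^1$ at $t_+$ — both one-sided $\tau$-derivatives there equal $f(q)$, by continuity of the identity $\de_t g(t,\zeta)=f(\gamma(t))$ on the left and because the new curve is an integral curve of $\grad^f$ on the right — which is again an integral curve of $\grad^f$ with $\grad^ff$ constant ($\equiv\grad^ff(q)$ on both pieces), defined past $t_+$; this contradicts the maximality of $(t_-,t_+)$. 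Hence $t_+=+\infty$, and symmetrically $t_-=-\infty$. Conversely, any integral curve of $\grad^f$ with constant $\grad^ff$, normalised to cross $\{t=0\}$, is by the first step of this form. This is the first assertion of the lemma.

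\emph{$\Psi$ is a homeomorphism, and the conditions and monotonicity follow.} The map $\Psi$ is continuous; it is surjective because through every $(\eta_0,\tau_0)$ there is, by the above, a global integral curve with constant $\grad^ff$, which crosses $\{t=0\}$ at some $(0,\zeta)$ and hence equals $t\mapsto(t,g(t,\zeta))$, giving $(\eta_0,\tau_0)=\Psi(\eta_0,\zeta)$; it is injective because $\Psi(t,\zeta_1)=\Psi(t,\zeta_2)$ forces the quadratics $g(\cdot,\zeta_1)$ and $g(\cdot,\zeta_2)$ to share their value, first derivative and second derivative at $t$, hence to be equal, so $\zeta_1=\zeta_2$. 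By the Invariance of Domain Theorem $\Psi$ is a homeomorphism, and Lemma~\ref{lem10251359} then yields that $A$ and $B$ satisfy (1), (2) and (3). Finally, (1) implies $\bigl(\grad^ff(0,\zeta)-\grad^ff(0,\zeta')\bigr)(\zeta-\zeta')\ge 0$, so $A$ is non-decreasing; and for every $\eta\in\R$ and $\zeta'>\zeta$ the polynomial $t\mapsto g(t,\zeta')-g(t,\zeta)$ has degree at most $2$, is positive at $t=0$ (it equals $\zeta'-\zeta$), and has no real zero (injectivity of $\Psi$), hence is positive everywhere; thus $\zeta\mapsto g(\eta,\zeta)$ is a strictly increasing continuous surjection of $\R$ (surjectivity of $\Psi$), with strictly increasing inverse $\tau\mapsto\zeta(\eta,\tau)$, and since $\grad^ff$ is constantly equal to $A(\zeta(\eta,\tau))$ on the fibre through $(\eta,\tau)$ we conclude that $\tau\mapsto\grad^ff(\eta,\tau)$ is non-decreasing. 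The step I expect to be the main obstacle is the identification $h''=\grad^ff\circ\gamma$ in the local analysis: the characteristic ODE $\dot\tau=f(t,\tau)$ has a merely continuous right-hand side, so neither this regularity nor the uniqueness used afterwards can be read off the ODE, and both must be extracted from the blow-up definition of the intrinsic differential together with Lemma~\ref{lem11191340}; once that is in hand, the globalisation by concatenation and the topological argument are routine.
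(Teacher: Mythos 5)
Your proof is correct and follows essentially the same route as the paper's: the special curves are quadratics uniquely determined by $f(p)$ and $\grad^ff(p)$, they extend to all of $\R$ by a limit/concatenation argument, the resulting map $\Psi$ is a continuous bijection and hence a homeomorphism, and Lemma~\ref{lem10251359} plus order-preservation of $\zeta\mapsto g(\eta,\zeta)$ gives the conclusions. The only difference is that you spell out, via Lemma~\ref{lem11191340} and the blow-up definition, why the curve supplied by the weak Lagrangian hypothesis satisfies $h''=\grad^ff\circ\gamma$ — a regularity point the paper absorbs into its invocation of Lemma~\ref{lem11190949}.
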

\begin{proof}[Proof of Lemma \ref{lem11191504}]
	Given a function $g_p:I\to\R$ like in Lemma \ref{lem11190949}  along which $\grad^ff$ is constant,
	we have $g_p''(t)=\grad^ff(p)$ for all $t\in I$, i.e., $g_p$ is a polynomial of second degree.
	Moreover, such a $g_p$ is unique for every $p$, because it is completely determined by $f(p)$ and $\grad^ff(p)$.
	
	It follows that $g_p$ is defined on $\R$.
	Indeed, suppose $I=(a,b)$ and set $q=\lim_{t\to b}p+(t,g_p(t))$, which exists because $g_p$ is a polynomial.
	If $g_q:J\to\R$ is a function like in Lemma \ref{lem11190949} along which $\grad^ff$ is constant, then $g_q$ is uniquely determined by $f(q)$ and $\grad^ff(q)$, where
	\begin{align*}
	f(q) &= \lim_{t\to b}f(p+(t,g_p(t))) = \lim_{t\to b} g_p'(t) , \\
	\grad^ff(q) &= \lim_{t\to b}\grad^ff(p+(t,g_p(t))) = \lim_{t\to b}g_p''(t) .
	\end{align*}
	Hence, $g_q(t)=g_p(b+t)$ for $t<0$ and so $g_p$ can be extended beyond $b$.
	Similarly, we can extend $g_p$ to values below $a$.
	
	If we consider $p=(0,\zeta)$, then $g_p(t)=g(t,\zeta)$, where $g(t,\zeta)$ is given in \eqref{eq08101341}.
	If $p\in\R^2$, then the curve $t\mapsto p+(t,g_p(t))$ intersects the axis $\{0\}\times\R$ at some point, and thus $g_p$ is of the form described in \eqref{eq08101341} up to a change of variables in $t$.
	We conclude that 
	the map $(t,\zeta)\mapsto (t,g(t,\zeta))$ is a homeomorphism.
	Therefore, the conditions stated in Lemma~\ref{lem10251359} hold true.
	
	Finally, since $(f(0,\zeta)-f(0,\zeta'))^2\ge0$, then $\zeta\mapsto \grad^ff(0,\zeta)$ is non-decreasing.
	Since $\grad^ff(t,g(t,\zeta)) = \grad^ff(0,\zeta)$ and since, for $t\in\R$ fixed, the map $\zeta\mapsto g(t,\zeta)$ is a ordering-preserving homeomorphism $\R\to\R$, then the map $\tau\mapsto\grad^ff(\eta,\tau)$ is non-decreasing as well, for all $\eta\in\R$.
\end{proof}
\begin{remark}
	Lemma~\ref{lem11191504} states in particular that, if $\Delta^ff=0$ in weak Lagrangian sense then $\Gamma_f$ is foliated by horizontal straight lines.
	Indeed, notice that any parabola $t\mapsto g(t,\zeta)$ in $\R^2$ lifts to a straight line in $\Gamma_f$. 
	In \cite[Theorem 3.5]{MR3406514} Galli and Ritoré are able to prove that, if $f\in\Co^1(\R^2)$ and if $\Gamma_f$ is a minimal surface in $\HH$, then $\Gamma_f$ is foliated by horizontal straight lines, i.e., $\Delta^ff=0$ holds in weak Lagrangian sense. 
\end{remark}
\begin{remark}\label{rem08140839}
	One may wonder wether Definition~\ref{def08101850} for weak Lagrangian solutions to $\Delta^ff=0$ is equivalent to a stronger condition, namely that $\grad^ff$ is constant along \emph{all} integral curves of $\grad^f$.
	This is the case when $f\in C^1(\R^2)$, because integral curves are unique at each point.
	The following example shows that strong and weak conditions are not equivalent.
	Indeed, there are functions for which the curves $t\mapsto (t,g(t,\zeta))$ described in Lemma~\ref{lem11191504} do not exhaust all the integral curves of $\grad^f$.
\end{remark}
Let $h\in\Co^2(\R)$ and define $k:\R^2\to\R$ by requiring that for each $s\in\R$ the function $t\mapsto k(t,s)$ is the unique polynomial of second degree with $k(s,s)=h(s)$, $\de_tk(s,s)=h'(s)$ and $\de^2_tk(s,s)=h''(s)$.
Explicitly, we have
\[
k(t,s) = \frac{h''(s)}{2} t^2 + (h'(s)-h''(s)s) t + h(s)-h'(s)s+\frac{h''(s)}2s^2 .
\]
If the map $K(t,s)=(t,k(t,s))$ is a homeomorphism $\R^2\to\R^2$, then we may define a function $f\in\Cow$ by $f(K(t,s))=\de_tk(t,s)$, 
as we did in Remark~\ref{rem08101832}.
Then $t\mapsto K(t,s)$ are integral curves of $\grad^f$ and $\grad^ff(K(t,s))=\de_t^2k(t,s)=h''(s)$.
It follows that $\Delta^ff=0$ holds in weak Lagrangian sense.
However, $s\mapsto K(s,s)=(s,h(s))$ is an integral curve of $\grad^f$, because $f(K(s,s))=h'(s)$.
Since $\grad^ff(K(s,s))=h''(s)$, there is no need for $\grad^ff$ to be constant along this curve.

As an example, consider $h(s)=s^3$, for which we have $k(t,s)=3st^2-3s^2t+s^3$.
We show that the map $K$ is in this case a homeomorphism.
Define $\zeta(s)=s^3$, $A(\zeta(s))=6s=6\zeta^{1/3}$, $B(\zeta(s))=-3s^2=-3\zeta^{2/3}$ and the functions $g(t,\zeta)$ and $G(t,\zeta)$ as in Lemma~\ref{lem10251359}. 
Since $K(t,s)=G(t,\zeta(s))$ and since $\zeta(\cdot)$ is a homeomorphism $\R\to\R$, we need only to show that $G$ is a homeomorphism $\R^2\to\R^2$, i.e., that the functions $A$ and $B$ satisfy all conditions of Lemma~\ref{lem10251359}:
\begin{enumerate}
\item[(1):]
	Let $\zeta=\zeta(s),\zeta'=\zeta(s')\in\R$.
	If $A(\zeta)=A(\zeta')$, then $s=s'$ and thus $B(\zeta)=B(\zeta')$.
	If instead $A(\zeta)\neq A(\zeta')$, then $s\neq s'$ and thus
	\begin{multline*}
	2(A(\zeta)-A(\zeta'))(\zeta-\zeta') - (B(\zeta)-B(\zeta'))^2 \\
	= 2 (6s-6s')(s^3-s'^3) - 9 (s'^2-s^2)^2
	= 3 (s-s')^4 >0 .
	\end{multline*}
\item[(2)\&(3):]
	Since $\zeta-\frac{B(\zeta)^2}{2A(\zeta)} = \frac14 s^3$ and since $\zeta\to\pm\infty$ if and only if $s\to\pm\infty$, then $\lim_{\zeta\to+\infty} \zeta-\frac{B(\zeta)^2}{2A(\zeta)} = \lim_{s\to+\infty}\frac14 s^3 = +\infty$ and $\lim_{\zeta\to-\infty} \zeta-\frac{B(\zeta)^2}{2A(\zeta)} = \lim_{s\to-\infty}\frac14 s^3 = -\infty$.
\end{enumerate}
The function $f$ can be explicitly computed as 
$f(\eta,\tau) = 3\eta^2 - 3(\tau-\eta^3)^{2/3} $.
Finally, as we noticed before, $s\mapsto (s,s^3)$ is an integral curve of $\grad^f$ and $\grad^ff(s,s^3)=6s$ is not constant.


\begin{lemma}\label{lem10251843}
	Let $f\in\Cow$. 
	If $\Delta^ff=0$ in weak Lagrangian sense, then there is a sequence $\{f_k\}_{k\in\N}\subset\Co^\infty(\R^2)$ converging to $f$ in $\Cow$  such that $\Delta^{f_k}f_k=0$ for all $k\in\N$.
\end{lemma}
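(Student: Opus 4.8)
The plan is to feed mollified data into the reconstruction procedure underlying Lemma~\ref{lem11191504}. By that lemma, putting $A(\zeta):=\grad^ff(0,\zeta)$ and $B(\zeta):=f(0,\zeta)$ — two continuous functions obeying conditions~(1)--(3) of Lemma~\ref{lem10251359} — and $g(t,\zeta):=\frac{A(\zeta)}{2}t^2+B(\zeta)t+\zeta$ as in~\eqref{eq08101341}, the map $\Psi(t,\zeta):=(t,g(t,\zeta))$ is a homeomorphism of $\R^2$, the curves $t\mapsto\Psi(t,\zeta)$ are exactly the integral curves of $\grad^f$, and along them $f(\Psi(t,\zeta))=\de_tg(t,\zeta)$ while $\grad^ff(\Psi(t,\zeta))=\de_t^2g(t,\zeta)=A(\zeta)$. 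I would then approximate $(A,B)$: fix a compactly supported symmetric mollifier $\rho$ and a sequence $\epsilon_k\downarrow 0$, and set $A_k:=A*\rho_{1/k}$, $B_k:=B*\rho_{1/k}\in\Co^\infty(\R)$ and
\[
g_k(t,\zeta):=\frac{A_k(\zeta)}{2}t^2+B_k(\zeta)t+(1+\epsilon_k)\zeta .
\]
Since $\rho$ is symmetric, $g_k(t,\cdot)=\bigl(g(t,\cdot)*\rho_{1/k}\bigr)+\epsilon_k\,\zeta$. The approximants $f_k$ will be obtained from $g_k$ by the same recipe that produces $f$ from $g$.

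Next I would establish two properties of $\Psi_k(t,\zeta):=(t,g_k(t,\zeta))$. By~\eqref{eq07172015} the function $g(t,\cdot)$ is strictly increasing for every $t$, with $\lim_{\zeta\to\pm\infty}g(t,\zeta)=\pm\infty$ since $\Psi$ is onto; hence $g_k(t,\cdot)$ is again strictly increasing and proper, so $\Psi_k$ is a homeomorphism of $\R^2$. Moreover, the mollification of the non-decreasing function $g(t,\cdot)$ is non-decreasing, so $\de_\zeta g_k=\de_\zeta\bigl(g(t,\cdot)*\rho_{1/k}\bigr)+\epsilon_k\ge\epsilon_k>0$; this is the whole point of the summand $\epsilon_k\,\zeta$, and it makes $\Psi_k$ a $\Co^\infty$-diffeomorphism of $\R^2$. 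After the affine substitution $\sigma=(1+\epsilon_k)\zeta$, the function $g_k$ takes the normalized form of Lemma~\ref{lem10251359} with $\Co^\infty$ coefficients, which satisfy conditions~(1)--(3) since $\Psi_k$ is a homeomorphism. By Remark~\ref{rem08101832}, the function $f_k$ determined by $f_k(\Psi_k(t,\zeta))=\de_tg_k(t,\zeta)$ then belongs to $\Cow$ and satisfies $\Delta^{f_k}f_k=0$ in the weak Lagrangian sense; and since $\Psi_k$ is a diffeomorphism, $f_k=(\de_tg_k)\circ\Psi_k^{-1}\in\Co^\infty(\R^2)$. For a smooth $f_k$ the integral curves of $\grad^{f_k}$ are unique, so $\grad^{f_k}f_k$ is constant along all of them (cf.\ Remark~\ref{rem08140839}), i.e.\ $\Delta^{f_k}f_k=\grad^{f_k}(\grad^{f_k}f_k)=0$ holds in the classical sense.

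It remains to check that $f_k\to f$ in $\Cow$. Mollification of continuous functions gives $A_k\to A$ and $B_k\to B$ uniformly on compact sets, whence (using also $\epsilon_k\to0$) $g_k\to g$ and $\de_tg_k\to\de_tg$ uniformly on compact sets, so $\Psi_k\to\Psi$ uniformly on compact sets. Since $\Psi_k$ and $\Psi$ preserve the first coordinate and are strictly increasing in the second, a standard argument — comparing $g_k(\eta,\cdot)$ with $g(\eta,\cdot)$ on compact sets and using the strict monotonicity of $g(\eta,\cdot)$ — yields $\Psi_k^{-1}\to\Psi^{-1}$ uniformly on compact sets. Composing, $f_k=(\de_tg_k)\circ\Psi_k^{-1}\to(\de_tg)\circ\Psi^{-1}=f$ uniformly on compact sets; and since $\grad^{f_k}f_k(\Psi_k(t,\zeta))=\de_t^2g_k(t,\zeta)=A_k(\zeta)$, we get that $\grad^{f_k}f_k$ is $A_k$ composed with the second component of $\Psi_k^{-1}$, which converges to $A$ composed with the second component of $\Psi^{-1}$, namely $\grad^ff$. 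This is precisely the statement that $f_k\to f$ in $\Cow$.

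I expect the genuine difficulty to be making $\Psi_k$ a \emph{diffeomorphism} rather than merely a homeomorphism, which is what upgrades $f_k$ from $\Cow$ to $\Co^\infty$: a bare mollification produces a smooth, strictly increasing fibre map $\zeta\mapsto(g(t,\cdot)*\rho_{1/k})(\zeta)$ whose derivative can still vanish at isolated points. The remedy is the auxiliary strictly increasing term $\epsilon_k\zeta$ — geometrically, a slight opening of the foliation of $\Gamma_f$ by horizontal lines — and one then has to re-check that conditions~(1)--(3) of Lemma~\ref{lem10251359} and the homeomorphism property survive the perturbation, and that the inverse maps $\Psi_k^{-1}$ still converge locally uniformly. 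The remaining steps are routine applications of the characterizations in Lemmas~\ref{lem11190949}, \ref{lem10251359}, \ref{lem11191504} and Remark~\ref{rem08101832}.
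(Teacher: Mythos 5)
Your proof is correct and, at the strategic level, follows the same route as the paper: extract the data $A=\grad^ff(0,\cdot)$, $B=f(0,\cdot)$ of the parabola foliation from Lemma~\ref{lem11191504}, mollify them, and rebuild the approximants from the mollified foliation. The execution differs in two substantive ways. First, where the paper keeps $g_\epsilon(t,\zeta)=\frac{A_\epsilon(\zeta)}{2}t^2+B_\epsilon(\zeta)t+\zeta$ and verifies conditions (1)--(3) of Lemma~\ref{lem10251359} by hand (a pointwise argument for (1a), Jensen's inequality for (1b), (2) and (3)), you bypass that verification entirely: the identity $g_k(t,\cdot)=g(t,\cdot)*\rho_{1/k}+\epsilon_k\,\zeta$ (exact because the mollifier is symmetric) lets you inherit strict monotonicity and properness of the fibre maps $\zeta\mapsto g_k(t,\zeta)$ directly from those of $g(t,\cdot)$, which gives the homeomorphism property of $\Psi_k$ without touching (1)--(3); this is shorter and arguably more transparent. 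Second, your perturbation $\epsilon_k\zeta$ forces $\de_\zeta g_k\ge\epsilon_k>0$, so $\Psi_k$ is a genuine $\Co^\infty$-diffeomorphism and $f_k=(\de_tg_k)\circ\Psi_k^{-1}$ is manifestly smooth. This buys you something the paper's proof passes over in silence: conditions (1)--(3) for the mollified data only yield $2A_\epsilon'\ge(B_\epsilon')^2$, hence $\de_\zeta g_\epsilon=\frac{A_\epsilon'}{2}t^2+B_\epsilon' t+1\ge0$ with possible equality at isolated points, so the paper's $G_\epsilon$ is a priori only a homeomorphism and the asserted smoothness of $f_\epsilon$ really does call for a perturbation of the kind you introduce (or a separate argument). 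The two places where you appeal to ``standard arguments'' --- properness of $g_k(t,\cdot)$ and the locally uniform convergence $\Psi_k^{-1}\to\Psi^{-1}$ --- are one-line consequences of monotonicity and are treated with the same brevity in the paper itself, so I see no gap.
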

\begin{proof}
	Let $\{\rho_\epsilon\}_{\epsilon>0}\subset\Co^\infty(\R)$ be a family of mollifiers with 
	$\spt(\rho_\epsilon)\subset[-\epsilon,\epsilon]$, $\rho_\epsilon\ge0$, $\rho_\epsilon(0)>0$ and $\int_{\R}\rho_\epsilon(r)\dd r = 1$.
	Fix $f\in\Cow$ with $\Delta^ff=0$.
	Set $A(\zeta) := \grad^ff(0,\zeta)$ and $B(\zeta):=f(0,\zeta)$.
	Define
	\begin{align*}
	 	A_\epsilon(\zeta) &:= \int_{\R} \grad^ff(0,\zeta-r)\rho_\epsilon(r)\dd r , \\
		B_\epsilon(\zeta) &:= \int_{\R} f(0,\zeta-r)\rho_\epsilon(r)\dd r , \\
		g_\epsilon(t,\zeta) &:= \frac{A_\epsilon(\zeta)}{2} t^2 + B_\epsilon(\zeta) t + \zeta .
	\end{align*}
	We claim that, for all $\epsilon>0$, all conditions stated in Lemma~\ref{lem10251359} hold for $A_\epsilon$ and $B_\epsilon$.
	Let $\zeta,\zeta'\in\R$ with $\zeta<\zeta'$.
	First, suppose that $A_\epsilon(\zeta)=A_\epsilon(\zeta')$.
	Notice that $A(\zeta-r)-A(\zeta'-r)\le 0$ for all $r\in\R$, because $A$ is non-decreasing.
	Thus, we deduce from
	\[
	0=A_\epsilon(\zeta)-A_\epsilon(\zeta')
	= \int_\R (A(\zeta-r)-A(\zeta'-r)) \rho_\epsilon(r) \dd r
	\]
	that $(B(\zeta-r)-B(\zeta'-r))\rho_\epsilon(r)=0$ for all $r\in\R$ and therefore that $B_\epsilon(\zeta)=B_\epsilon(\zeta')$, i.e., (1a) holds\\
	Second, suppose that $A_\epsilon(\zeta)\neq A_\epsilon(\zeta')$.
	Using Jensen's inequality, we have
	\begin{multline*}
	2 \big(A_\epsilon(\zeta)-A_\epsilon(\zeta')\big)(\zeta-\zeta') \\
	= \int_\R 2\big(A(\zeta-r)-A(\zeta'-r)\big) \big((\zeta-r)-(\zeta'-r)\big) \rho_\epsilon(r) \dd r \\
	> \int_\R \big(B(\zeta-r)-B(\zeta'-r)\big)^2\rho_\epsilon(r)\dd r 
	\ge \left( \int_\R (B(\zeta-r)-B(\zeta'-r))\rho_\epsilon(r) \dd r \right)^2.
	\end{multline*}
	So, condition (1b) is also verified.\\
	Suppose that $A_\epsilon(\zeta_0)>0$ for some $\zeta_0\in\R$.
	By the monotonicity of $A$ and the positivity of $\rho_\epsilon$, we may assume $A(\zeta_0)>0$. 
	Let $M>0$.
	Since property $(2)$ of Lemma~\ref{lem10251359} holds for $A$,
	there is $\zeta_1>\zeta_0$ so that for all $\zeta>\zeta_1$
	\[
	M < \zeta - \frac{B(\zeta)^2}{2A(\zeta)} = \frac{2A(\zeta)\zeta-B(\zeta)^2}{2A(\zeta)} .
	\]
	Using Jensen inequality, we have for all $\zeta>\zeta_1+\epsilon$
	\begin{multline*}
	2 A_\epsilon(\zeta)\zeta-B_\epsilon(\zeta)^2
	\ge \int_\R \left( 2\zeta A(\zeta-r) - B(\zeta-r)^2 \right) \rho_\epsilon(r) \dd r \\
	= 2 \int_\R A(\zeta-r) r\rho_\epsilon(r) \dd r + \int_\R \left(2(\zeta-r)A(\zeta-r) - B(\zeta-r)^2\right) \rho_\epsilon(r) \dd r \\
	\ge -2\epsilon \int_\R A(\zeta-r) \rho_\epsilon(r) \dd r +  2M \int_\R A(\zeta-r) \rho_\epsilon(r) \dd r 
	= 2A_\epsilon(\zeta) (M-\epsilon).
	\end{multline*}
	Thus, $M-\epsilon < \zeta - \frac{B_\epsilon(\zeta)^2}{2A_\epsilon(\zeta)}$ for all $\zeta>\zeta_1+\epsilon$.
	Since $M$ was arbitrary, we obtain property~$(2)$ of Lemma~\ref{lem10251359}.
	Property~$(3)$ can be similarly obtained.
	
	 The functions $G_\epsilon:\R^2\to\R^2$, $G_\epsilon(t,\zeta) := (t,g_\epsilon(t,\zeta))$, 
	are homeomorphisms and, as $\epsilon\to0$, they converge to $G_0$ uniformly on compact sets.
	It follows that $G_\epsilon^{-1}$ also converge to $G_0^{-1}$, as $\epsilon\to0$.
	
	 For $\epsilon>0$, define $f_\epsilon\in\Co^\infty(\R^2)$ via
	\[
	f_\epsilon(t,g_\epsilon(t,\zeta)) = A_\epsilon(\zeta)t+B_\epsilon(\zeta).
	\]
	By the continuity of $G_\epsilon$ and $G_\epsilon^{-1}$ in $\epsilon$, $f_\epsilon$ and $\grad^{f_\epsilon}f_\epsilon$ converge to $f_0$ and $\grad^{f_0}{f_0}$ uniformly on compact sets. 
	Finally, $\Delta^{f_\epsilon}f_\epsilon=0$ by construction.
\end{proof}

\section{A Lagrangian approach to contact variations}\label{sec11251132}

\begin{proposition}\label{prop09141457}
	Let $\phi=(\phi_1,\phi_2):\R^2\to\R^2$ be a $\Co^\infty$-diffeomorphism.
	Let $f\in\Cow$ and assume
	\begin{equation}\label{eq07021739}
	\grad^f\phi_1(p)\neq0\qquad\forall p\in\R^2 .
	\end{equation}
	Define $\bar f:\R^2\to\R$ as
	\[
	\bar f\circ\phi = \frac{\grad^f\phi_2}{\grad^f\phi_1} .
	\]
	Then $\bar f\in\Cow$ and
	\begin{equation}\label{eq10241705}
	\grad^{\bar f}\bar f\circ\phi 
	= \frac{\Delta^f\phi_2}{(\grad^f\phi_1)^2} - \frac{\grad^f\phi_2}{(\grad^f\phi_1)^3} \Delta^f\phi_1 .
	\end{equation}
\end{proposition}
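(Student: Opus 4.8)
The strategy is to verify directly the Lagrangian characterisation of $\Cow$ given by Lemma~\ref{lem11190949}: for every point of $\R^2$ we must produce a $\Co^2$ curve through it along which $\bar f$ is the first derivative of the second coordinate and the candidate gradient is the second derivative. The subtlety is that $f$ itself is only continuous, so we cannot differentiate it twice; but along an integral curve $\gamma$ of $\grad^f$ supplied by Lemma~\ref{lem11190949} the function $t\mapsto f(\gamma(t))$ is $\Co^1$ with derivative $t\mapsto(\grad^ff)(\gamma(t))$, and this is enough to carry out all the differentiations we need.

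Fix $q\in\R^2$ and set $p:=\phi^{-1}(q)$. By Lemma~\ref{lem11190949} there is a $\Co^2$ function $g_p$ on a neighbourhood $I$ of $0$ with $g_p(0)=0$, $g_p'(t)=f(\gamma(t))$ and $g_p''(t)=(\grad^ff)(\gamma(t))$, where $\gamma(t):=p+(t,g_p(t))$, so that $\dot\gamma=(1,f\circ\gamma)$. Put $h_i:=\phi_i\circ\gamma$ for $i=1,2$. Differentiating along $\gamma$ gives $h_i'=(\grad^f\phi_i)\circ\gamma$; differentiating once more, evaluating the term involving $\tfrac{d}{dt}f(\gamma(t))$ via $g_p''=(\grad^ff)\circ\gamma$, a direct computation gives $h_i''=(\Delta^f\phi_i)\circ\gamma$ with $\Delta^f$ as in~\eqref{eq10261049}. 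Since $\grad^ff$ and the derivatives of $\phi$ are continuous, $h_1$ and $h_2$ are of class $\Co^2$. Because $h_1'(0)=(\grad^f\phi_1)(p)\neq0$ by~\eqref{eq07021739}, the inverse function theorem provides a $\Co^2$ local inverse $s\mapsto t(s)$ of $t\mapsto h_1(t)-h_1(0)$ near $0$, with $t(0)=0$. Define $\bar g_q(s):=h_2(t(s))-h_2(0)$; this is $\Co^2$, $\bar g_q(0)=0$, and since $q=\phi(p)=(h_1(0),h_2(0))$ the curve $s\mapsto q+(s,\bar g_q(s))$ coincides with $s\mapsto\phi(\gamma(t(s)))$.

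It then remains to compute derivatives. From $t'(s)=1/h_1'(t(s))$ and the chain rule,
\[
\bar g_q'(s)=\frac{h_2'(t(s))}{h_1'(t(s))}=\left(\frac{\grad^f\phi_2}{\grad^f\phi_1}\right)(\gamma(t(s)))=(\bar f\circ\phi)(\gamma(t(s)))=\bar f\big(q+(s,\bar g_q(s))\big),
\]
and the quotient rule, after substituting $h_i'=(\grad^f\phi_i)\circ\gamma$, $h_i''=(\Delta^f\phi_i)\circ\gamma$ and $t'=1/h_1'$, gives
\[
\bar g_q''(s)=\frac{h_2''h_1'-h_2'h_1''}{(h_1')^3}(t(s))=\left(\frac{\Delta^f\phi_2}{(\grad^f\phi_1)^2}-\frac{\grad^f\phi_2}{(\grad^f\phi_1)^3}\,\Delta^f\phi_1\right)(\gamma(t(s))).
\]
Let $\psi:\R^2\to\R$ be defined by letting $\psi\circ\phi$ equal the right-hand side of~\eqref{eq10241705}; it is continuous, since $\grad^ff$, the derivatives of $\phi$, and $\phi^{-1}$ are continuous and $\grad^f\phi_1$ never vanishes. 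Then $\bar g_q''(s)=\psi\big(q+(s,\bar g_q(s))\big)$. As $\bar f=\big(\frac{\grad^f\phi_2}{\grad^f\phi_1}\big)\circ\phi^{-1}$ is continuous, Lemma~\ref{lem11190949} applies to $\bar f$ with these curves $\bar g_q$ and shows that $\bar f\in\Cow$ with differential $\grad^{\bar f}\bar f=\psi$, which is precisely~\eqref{eq10241705}. The only point requiring care is the regularity bookkeeping: all differentiations must be performed along the curves $\gamma$, where $f$ restricts to a $\Co^1$ function, so that $h_1,h_2$ are genuinely $\Co^2$ and the reparametrisation by $h_1$ (which is where hypothesis~\eqref{eq07021739} is used) is licit; after that the argument is just the chain and quotient rules.
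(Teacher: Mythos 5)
Your proof is correct and follows essentially the same route as the paper's: both parametrise the image curve $\phi(\gamma(t))$ by its first coordinate (you via the inverse function theorem applied to $h_1$, the paper via the implicit function theorem, which is the same step), compute $h_i'=(\grad^f\phi_i)\circ\gamma$ and $h_i''=(\Delta^f\phi_i)\circ\gamma$ along the integral curve where $f$ is differentiable, and conclude by the chain/quotient rule together with Lemma~\ref{lem11190949}. Your explicit remarks on well-definedness of $\bar f$ and continuity of the candidate differential are slightly more careful than the paper's write-up but do not change the argument.
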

Notice that, if $f\in\Co^1(\R^2)$, then $\bar f\in\Co^1(\R^2)$ as well.
\begin{remark}\label{rem10261125}
	If $\{\phi^\epsilon\}_{\epsilon>0}$ is a smooth one-parameter family of diffeomorphisms $\phi^\epsilon:\R^2\to\R^2$ with $\phi^0=\Id$, then, for $\epsilon>0$ small enough, the functions $f_\epsilon$ defined by
	\[
	f_\epsilon\circ\phi^\epsilon = \frac{\grad^f\phi_2^\epsilon}{\grad^f\phi_1^\epsilon} .
	\]
	belong to $\Cow$ and converge to $f$ in $\Cow$.
\end{remark}

\begin{proof}
	The idea is to transform via $\phi$ the integral curves of $\grad^f$ into the ones of $\grad^{\bar f}$.
	Fix $p=(\eta ,\tau )$, let $q:=(\bar \eta ,\bar \tau ):=\phi(p)$ and let $g_p:I\to\R$ be like in Lemma \ref{lem11190949}.
	Thanks to the condition $\grad^f\phi_1\neq0$ and the Implicit Function Theorem, there exist two $\Co^2$-function $s:I\to\R$ and $\bar g_q:s(I)\to\R$, such that 
	\[
	q+(s,\bar g_q(s)) = \phi(p+(t,g_p(t)) ,
	\qquad \forall t\in I.
	\]
	Therefore
	\[
	\begin{cases}
	s(t) = \phi_1(\eta +t,\tau +g_p(t)) - \bar \eta  \\
	\bar g_q(s(t)) = \phi_2(\eta +t,\tau +g_p(t)) - \bar \tau  .
	\end{cases}
	\]
	
	We define
	\[
	\bar f(q) := \bar g_q'(0) .
	\]
	Notice that this value does not depend on the choice of $g_p$, as far as $t\mapsto (t,g_p(t))$ is an integral curve of $\grad^f$.

	We want to write $\bar g_q'(0)$.
	Set
	\[
	p_t := (\eta +t,\tau +g_p(t)).
	\]
	First
	\begin{equation*}
	\frac{\dd}{\dd t}s(t) 
	= \de_\eta \phi_1(p_t) + \de_\tau \phi_1(p_t) g_p'(t) 
	= \grad^f\phi_1(p_t) ,
	\end{equation*}
	\[
	\frac{\dd}{\dd t} \bar g_q(s(t))
	= \de_\eta \phi_2(p_t) + \de_\tau \phi_2(p_t) g_p'(t) 
	= \grad^f\phi_2(p_t) .
	\]
	Since
	\begin{equation*}
	\frac{\dd}{\dd t} \bar g_q(s(t)) = \bar g_q'(s(t))\cdot \frac{\dd}{\dd t}s(t) ,
	\end{equation*}
	we have for $s=0=t$
	\[
	\bar f(q) = \frac{ \grad^f\phi_2(p) }{ \grad^f\phi_1(p) } .
	\]
	
	$\grad^{\bar f}\bar f(q)$ is the derivative of $\bar f$ along the curve $q+(s,\bar g_q(s))$ at $s=0$, i.e.,
	\[
	\grad^{\bar f}\bar f(q) = \bar g_q''(0) .
	\]
	As above, we want to write down $\bar g_q''(0)$ in a more explicit way.
	\begin{multline*}
	\frac{\dd^2}{\dd t^2}s(t)|_{t=0} 
	= \de_\eta ^2\phi_1(p) + \de_\tau \de_\eta \phi_1(p) f(p) + \hfill \\ \hfill
		+ \de_\eta \de_\tau \phi_1(p) f(p) + \de_\tau ^2\phi_1(p) (f(p))^2 + \de_\tau \phi_1(p) \grad^ff(p) 
	= \Delta^f\phi_1(p) . 
	\end{multline*}
	\begin{multline*}
	\frac{\dd^2}{\dd t^2} \bar g_q(s(t))|_{t=0}
	= \de_\eta ^2\phi_2(p) + \de_\tau \de_\eta \phi_2(p) f(p) + \hfill \\ \hfill
		+ \de_\eta \de_\tau \phi_2(p) f(p) + \de_\tau ^2\phi_2(p) (f(p))^2 + \de_\tau \phi_2(p) \grad^ff(p) 
	= \Delta^f\phi_2(p)
	\end{multline*}
	
	Since
	\[
	\frac{\dd^2}{\dd t^2} \bar g_q(s(t)) 
	= \bar g_q''(s(t))\cdot \left(\frac{\dd}{\dd t}s(t) \right)^2 + \bar g_q'(s(t))\cdot \frac{\dd^2}{\dd t^2}s(t) ,
	\]
	we have
	\begin{align*}
	\grad^{\bar f}\bar f(q) 
	&= \bar g_q''(0)
	= \frac{ \frac{\dd^2}{\dd t^2} \bar g_q|_{t=0} - \bar g_q'(0)\cdot \frac{\dd^2}{\dd t^2}s|_{t=0} }
		{(\frac{\dd}{\dd t}s|_{t=0})^2 } \\
	&= \frac1{(\grad^f\phi_1(p))^2 } \cdot \left(
		\Delta^f\phi_2(p)
		- \frac{\grad^f\phi_2(p)}{\grad^f\phi_1(p)} \cdot \Delta^f\phi_1(p) 
		\right) .
	\end{align*}
	By Lemma \ref{lem11190949}, the function $\bar f$ belongs to $\Cow$.
\end{proof}

\section{Contact transformations}\label{sec11241822}

A diffeomorphism $\Phi:\HH\to\HH$ is a \emph{contact diffeomorphism} if $\dd \Phi(H)\subset H$, see~\cite{MR2312336,MR1317384}.
Contact diffeomorphisms are the only diffeomorphisms that preserve the sub-Riemannian perimeter.

\begin{proposition}\label{prop11202024}
	Let $\Phi:\HH\to\HH$ be a diffeomorphism of class $\Co^2$.
	If, for all $E\subset\HH$ measurable and all $\Omega\subset\HH$ open, it holds
	\begin{equation}\label{eq01221851}
	P(E;\Omega)<\infty \qquad\THEN\qquad P(\Phi(E);\Phi(\Omega))<\infty,
	\end{equation}
	then $\Phi$ is contact.
\end{proposition}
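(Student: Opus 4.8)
The plan is to argue by contraposition: assuming $\Phi$ is not contact, I would construct a measurable set $E$ of locally finite perimeter whose image $\Phi(E)$ fails to have locally finite perimeter. The starting point is a pointwise analysis of the differential. If $\Phi$ is not contact, there is a point $p_0\in\HH$ at which $\dd\Phi_{p_0}(H_{p_0})\not\subset H_{\Phi(p_0)}$; since $\Phi$ is $\Co^2$ and $H$ is a smooth subbundle, this failure persists on an open neighbourhood $U$ of $p_0$. The geometric content I want to exploit is that a hyperplane transverse to the horizontal distribution is ``bad'' for the perimeter in a quantitative way: a vertical piece of surface — one whose tangent plane contains the $Z$ direction but whose normal is horizontal is the extreme opposite. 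Concretely, I would take $E$ near $p_0$ to be (a left-translate of) a half-space bounded by a plane whose tangent space at $p_0$ is exactly $\dd\Phi_{p_0}^{-1}$ of a vertical plane through $\Phi(p_0)$; then $\de E$ is smooth and transverse to $H$ near $p_0$, so $P(E;\Omega)<\infty$ for bounded $\Omega$, while $\Phi(\de E)$ is tangent to a vertical plane at $\Phi(p_0)$.

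The key steps, in order, are: (i) reduce to a local statement by left-translating so that $p_0=\Phi(p_0)=0$ and choosing coordinates; (ii) recall/establish the perimeter formula for a smooth hypersurface $\Sigma=\de E$, namely that $P(E;\Omega)=\int_{\Sigma\cap\Omega}\|\nu_H\|\dd\scr H^2_{\mathrm{eucl}}$ up to a bounded density, where $\nu_H$ is the horizontal component of the Euclidean unit normal — so finiteness of perimeter of a smooth piece is automatic, and the pathology must come from a \emph{non-smooth} image; (iii) therefore refine the construction: instead of a smooth surface, build $E$ as a countable union of thin horizontal slabs accumulating toward $p_0$, each of small perimeter, arranged so that $\sum_k P(E_k;\Omega)<\infty$ but, because $\dd\Phi$ tilts the characteristic direction, $\sum_k P(\Phi(E_k);\Phi(\Omega))=+\infty$. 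The cleanest incarnation: use the fact (implicit in the $\Cow$-theory recalled in Section \ref{subs11241752} and in Remark \ref{rem10261128}) that adding a constant to an intrinsic graph function can destroy finiteness of perimeter; pick $E=E_f$ for a suitable $f\in\Cow$ whose graph is transverse to the characteristic locus structure that $\Phi$ creates, and track how $\Phi$ distorts the intrinsic-graph presentation of $\de E$.

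The main obstacle I anticipate is step (iii): ruling out that $\Phi(E)$ might accidentally have finite perimeter for \emph{every} finite-perimeter $E$ even though $\Phi$ is not contact. A single smooth test surface does not suffice, because the perimeter of its image is always a finite integral; the divergence must be produced by a limiting/summing construction, and one must verify that the $\Co^2$-regularity of $\Phi$ is exactly enough to get a uniform lower bound on how much $\dd\Phi$ tilts horizontal covectors on a neighbourhood, while still allowing the summed source perimeter to converge. I would handle this by working with the explicit action of $\dd\Phi$ on the contact form: writing $\vartheta=\dd z-\frac12(x\,\dd y-y\,\dd x)$ for the standard contact form annihilating $H$, the condition $\dd\Phi(H)\subset H$ is $\Phi^*\vartheta=\lambda\vartheta$ for a nonvanishing function $\lambda$; failure means $\Phi^*\vartheta=\lambda\vartheta+\mu$ with $\mu$ a $1$-form having nonzero horizontal part at $p_0$, and I would use $\mu$ to compute, to first order, the horizontal normal of $\Phi(\Sigma_k)$ for the slab boundaries $\Sigma_k$ and extract the divergent sum. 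The Euclidean/sub-Riemannian perimeter comparison and the change-of-variables for $\scr H^2$ under $\Phi$ are then routine.
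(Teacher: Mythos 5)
Your overall strategy (contraposition, localizing the failure of contactness to an open set, and invoking the pathological example of Remark~\ref{rem10261128}) points in the right direction, and you correctly diagnose that a single smooth test surface cannot produce the contradiction. But there is a genuine gap exactly where you place your step (iii): you never produce a concrete mechanism forcing $P(\Phi(E);\Phi(\Omega))=+\infty$, and the two candidates you sketch (a sum of thin slabs with a first-order contact-form computation; ``tracking'' how $\Phi$ distorts the intrinsic-graph presentation of the fractal graph) are both left unexecuted and would be substantially harder than what is actually needed. In particular, estimating from below the perimeter of the image of a set whose boundary has Euclidean Hausdorff dimension strictly greater than $2$ under a non-contact $\Co^2$ map is not a routine change-of-variables; your plan offers no way to rule out accidental cancellation.

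The paper's proof avoids this difficulty entirely by a duality argument (Lemma~\ref{lem10261002}) that you are missing. One never shows directly that some image has infinite perimeter. Instead, on the open set $\Omega$ where $\dd\Phi(H_p)\not\subset H_{\Phi(p)}$, every unit vector field $U$ with $\spt U\ssubset\Omega$ splits as $U=V+W$ with $V$ horizontal, $\Phi_*W$ horizontal, and both of controlled norm; this is possible precisely because $H_p$ and $\dd\Phi^{-1}(H_{\Phi(p)})$ are distinct planes in $T_p\HH$, hence span it. Then $\int_E\div V\dd\scr L^3$ is controlled by $P(E;\Omega)$, while $\int_E\div W\dd\scr L^3=\int_{\Phi(E)}\div(\Phi_*W)\dd\scr L^3$ is controlled by $P(\Phi(E);\Phi(\Omega))$, so the hypothesis~\eqref{eq01221851} forces every set of finite sub-Riemannian perimeter in $\Omega$ to have finite \emph{Riemannian} perimeter there. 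The contradiction is then immediate from the set of Remark~\ref{rem10261128}, which has finite sub-Riemannian perimeter but infinite Riemannian perimeter since its boundary has Euclidean Hausdorff dimension larger than two. You should replace your step (iii) by this decomposition-of-test-vector-fields argument; as written, your proposal does not close.
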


We will show in this section that any variation of an intrinsic graph $\Gamma_f$ via contact diffeomorphisms is equivalent to a variation of $f$ via the transformations of Proposition~\ref{prop09141457} and Remark~\ref{rem10261125}.

\begin{proposition}\label{prop11191058}
	Let $\phi:\R^2\to\R^2$ be a $\Co^\infty$-diffeomorphism and $f,\bar f\in{\Cow}$ as in Proposition \ref{prop09141457}.
	Then there is a contact diffeomorphism $\Phi:\Omega\to\Phi(\Omega)$, where $\Omega$ and $\Phi(\Omega)$ are open subsets of $\HH$ with $\Gamma_f\subset\Omega$, such that $\Phi(\Gamma_f)=\Gamma_{\bar f}$.
\end{proposition}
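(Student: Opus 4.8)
The plan is to produce $\Phi$ as the classical \emph{first prolongation} (the lift to $1$-jets) of the planar diffeomorphism $\phi$, after rewriting $\HH$ in the coordinates adapted to $\pi_X$ mentioned in the footnote of Section~\ref{subs11241752}.

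First I would pass to the coordinates $(\xi,\eta,\tau)$ on $\HH$ defined by $(\xi,\eta,\tau)\leftrightarrow(0,\eta,\tau)*(\xi,0,0)$, i.e.\ $\xi=x$, $\eta=y$, $\tau=z+\tfrac12 xy$ (equivalently $(\eta,\tau)=\pi_X(x,y,z)$). A short computation gives, in these coordinates,
\[
X=\de_\xi,\qquad Y=\de_\eta+\xi\,\de_\tau,\qquad Z=\de_\tau ,
\]
so that $H=\ker\alpha$ with the contact form $\alpha:=\dd\tau-\xi\,\dd\eta$, the intrinsic graph is the ordinary graph $\Gamma_f=\{\xi=f(\eta,\tau)\}$, and $\pi_X$ becomes the projection $(\xi,\eta,\tau)\mapsto(\eta,\tau)$. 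Thus $(\HH,H)$ is identified with the $1$-jet space $J^1(\R,\R)$ with its standard contact structure, and, since $H=\ker\alpha$, a diffeomorphism is contact exactly when it pulls $\alpha$ back to a nowhere-vanishing multiple of itself. Under this identification the operator $\grad^f=\de_\eta+f\,\de_\tau$ is precisely the total-derivative operator $\de_\eta+\xi\,\de_\tau$ restricted to $\Gamma_f=\{\xi=f\}$.

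Next I would set $\Omega:=\{(\xi,\eta,\tau):(\de_\eta+\xi\,\de_\tau)\phi_1(\eta,\tau)\neq0\}$ and define $\Phi\colon\Omega\to\HH$ by
\[
\Phi(\xi,\eta,\tau):=\left(\frac{(\de_\eta+\xi\,\de_\tau)\phi_2(\eta,\tau)}{(\de_\eta+\xi\,\de_\tau)\phi_1(\eta,\tau)}\,,\ \phi_1(\eta,\tau)\,,\ \phi_2(\eta,\tau)\right).
\]
The set $\Omega$ is open and contains $\Gamma_f$, since on $\Gamma_f$ the denominator equals $\grad^f\phi_1$, which never vanishes by~\eqref{eq07021739}. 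Then I would check three things. (i) $\Phi$ is a diffeomorphism of $\Omega$ onto the open set $\Phi(\Omega)$: on each fibre $\{\eta=\eta_0,\ \tau=\tau_0\}$ the first component is a linear-fractional function of $\xi$ with nonzero determinant $-\det D\phi(\eta_0,\tau_0)$, hence injective in $\xi$, and combined with the injectivity of $\phi$ on the base this makes $\Phi$ injective; moreover $\det D\Phi=(\det D\phi)^2\big/\big((\de_\eta+\xi\,\de_\tau)\phi_1\big)^2\neq0$ on $\Omega$, so $\Phi$ is a local diffeomorphism, hence a diffeomorphism onto the open set $\Phi(\Omega)$ (its inverse is the prolongation of $\phi^{-1}$). (ii) $\Phi$ is contact: writing $\Phi=(\Xi,\phi_1,\phi_2)$ one computes $\Phi^*\alpha=\dd\phi_2-\Xi\,\dd\phi_1=\lambda\,\alpha$ with $\lambda=\det(D\phi)\big/\big((\de_\eta+\xi\,\de_\tau)\phi_1\big)$, which is nowhere zero on $\Omega$; this is exactly the condition $\dd\Phi(H)=H$. (iii) $\Phi(\Gamma_f)=\Gamma_{\bar f}$: substituting $\xi=f(\eta,\tau)$ in the first component of $\Phi$ turns each $(\de_\eta+\xi\,\de_\tau)\phi_i$ into $\grad^f\phi_i$, so by the definition of $\bar f$ in Proposition~\ref{prop09141457} one gets $\Phi\bigl(f(\eta,\tau),\eta,\tau\bigr)=\bigl((\bar f\circ\phi)(\eta,\tau),\,\phi_1(\eta,\tau),\,\phi_2(\eta,\tau)\bigr)$, which is the point of $\Gamma_{\bar f}$ lying over $\phi(\eta,\tau)$; since $\phi$ is onto $\R^2$ this yields $\Phi(\Gamma_f)=\Gamma_{\bar f}$, and in particular $\Gamma_{\bar f}=\Phi(\Gamma_f)\subset\Phi(\Omega)$, as required.

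The computations in (i)--(iii) are routine once the right map is written down, so there is no serious obstacle. The only slightly delicate point is the global structure of $\Phi$ — verifying that it is an honest diffeomorphism onto an open set and not merely a local one — which is handled by the fibrewise injectivity of the linear-fractional map together with the non-vanishing of $\det D\Phi$ (equivalently, by exhibiting the prolongation of $\phi^{-1}$ as its inverse). The real content lies in the first step: in the $\pi_X$-adapted coordinates, $\HH$ becomes $J^1(\R,\R)$ and $\grad^f$ becomes the total derivative along $\Gamma_f$, so that Proposition~\ref{prop09141457} is precisely the statement that $\phi$ prolongs to a contact transformation carrying $\Gamma_f$ onto $\Gamma_{\bar f}$.
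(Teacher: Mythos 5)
Your construction is correct and is essentially the paper's own proof: in the same coordinates of the second kind the paper defines exactly the map $\Phi(\xi,\eta,\tau)=\bigl(\grad^\xi\phi_2/\grad^\xi\phi_1,\phi_1,\phi_2\bigr)$ on $\Omega=\{\grad^\xi\phi_1\neq0\}$ and verifies the same three points, the only cosmetic difference being that it checks contactness by pushing the frame $\tilde X,\tilde Y$ forward rather than via $\Phi^*\alpha=\lambda\alpha$. Your fibrewise linear-fractional argument for global injectivity (and the identification of the inverse as the prolongation of $\phi^{-1}$) is a welcome extra precision over the paper's bare computation of $\det\dd\Phi$, but the route is the same.
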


\begin{proposition}\label{prop11191059}
	Let	$\Phi^\epsilon:\HH\to\HH$, $\epsilon\in\R$, be a smooth one-parameter family of contact diffeomorphisms such that there is a compact set $K\subset\HH$ with $\Phi^\epsilon|_{\HH\setminus K}=\Id$ for all $\epsilon$ and $\Phi^0=\Id$.
	Let $f\in\Co^\infty(\R^2)$.
	Then there is $\epsilon_0>0$ such that for all $\epsilon$ with $|\epsilon|<\epsilon_0$, 
	the maps $\phi^\epsilon:\R^2\to\R^2$,
	\[
	\phi^\epsilon(p) :=\pi_{X}\circ\Phi^\epsilon (p*f(p)),
	\]
	form a smooth family of $\Co^\infty$-diffeomorphism of $\R^2$.
	
	Moreover, if $f^\epsilon$ is the function defined via $f$ and $\phi^\epsilon$ as in Proposition~\ref{prop09141457}, then 
	\[
	\Phi^\epsilon(\Gamma_f) = \Gamma_{f^\epsilon} .
	\]
\end{proposition}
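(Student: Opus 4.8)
The plan is to start from the definition $\phi^\epsilon(p) := \pi_X \circ \Phi^\epsilon(p * f(p))$ and verify in sequence: that $\phi^\epsilon$ is well-defined and smooth, that it is a diffeomorphism for small $\epsilon$, that the hypothesis $\grad^f\phi_1^\epsilon \neq 0$ of Proposition~\ref{prop09141457} holds, and finally that the resulting $f^\epsilon$ has $\Gamma_{f^\epsilon} = \Phi^\epsilon(\Gamma_f)$. The smoothness of $\phi^\epsilon$ in $(p,\epsilon)$ is immediate: $p \mapsto p * f(p)$ is a smooth embedding $\R^2 \hookrightarrow \HH$ since $f \in \Co^\infty(\R^2)$, the family $\Phi^\epsilon$ is smooth by hypothesis, and $\pi_X$ is a smooth submersion. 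Since $\phi^0 = \pi_X \circ \Id \circ (p * f(p)) = \Id$ (using $\pi_X(p * f(p)) = p$), the map $\phi^\epsilon$ is a small perturbation of the identity. To upgrade this to \emph{diffeomorphism}, I would combine two facts: on the compact set $\pi_X(K)$ (plus a neighbourhood), $\phi^\epsilon$ is $\Co^1$-close to $\Id$ uniformly as $\epsilon \to 0$, so its differential is invertible and it is a local diffeomorphism there; outside $\pi_X(K)$ we have $\Phi^\epsilon = \Id$, hence $\phi^\epsilon = \Id$ there; a standard degree/proper-map argument then gives global injectivity and surjectivity for $|\epsilon| < \epsilon_0$.

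Next I would check the non-vanishing condition $\grad^f\phi_1^\epsilon(p) \neq 0$. The key is Lemma~\ref{lem11191340}: the curve $t \mapsto \gamma(t) * f(\gamma(t))$ is horizontal in $\Gamma_f$ exactly when $\gamma$ is an integral curve of $\grad^f$. Since $\Phi^\epsilon$ is contact, it sends horizontal curves to horizontal curves, so $\Phi^\epsilon$ maps the horizontal foliation of $\Gamma_f$ to the horizontal foliation of $\Phi^\epsilon(\Gamma_f)$. Now $\grad^f\phi_1^\epsilon(p)$ is precisely $\frac{\dd}{\dd t}\big|_0 \phi_1^\epsilon(\gamma(t))$ where $\gamma$ is the integral curve of $\grad^f$ through $p$; writing $\Phi^\epsilon(\gamma(t) * f(\gamma(t))) = \phi^\epsilon(\gamma(t)) * (\text{something})$ and projecting, this derivative measures the horizontal velocity of the image curve in the first Heisenberg coordinate. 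If it vanished at some point, the image of a horizontal (hence non-constant, $\Co^1$, transverse-to-vertical) curve would fail to be an intrinsic graph curve parametrised properly — more concretely, $\grad^f\phi_1^\epsilon$ is $\Co^0$-close to $\grad^f(\Id_1) = \grad^f\eta = 1 \neq 0$ for small $\epsilon$ on the relevant compact set, and equals $1$ outside it, so shrinking $\epsilon_0$ if necessary gives $\grad^f\phi_1^\epsilon > 1/2 > 0$ everywhere. This lets us invoke Proposition~\ref{prop09141457} to produce $f^\epsilon \in \Cow$ (in fact $\Co^\infty$, by the remark following that proposition).

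The final identity $\Phi^\epsilon(\Gamma_f) = \Gamma_{f^\epsilon}$ is where the construction in the proof of Proposition~\ref{prop09141457} pays off. By Proposition~\ref{prop11191058} (applied with $\phi = \phi^\epsilon$), there is \emph{some} contact diffeomorphism $\tilde\Phi$, defined near $\Gamma_f$, with $\tilde\Phi(\Gamma_f) = \Gamma_{f^\epsilon}$ and, crucially, $\pi_X \circ \tilde\Phi(p * f(p)) = \phi^\epsilon(p) = \pi_X \circ \Phi^\epsilon(p * f(p))$. So both $\tilde\Phi$ and $\Phi^\epsilon$ send the point $p * f(p) \in \Gamma_f$ to a point of $\HH$ with the same $\pi_X$-image; since a point of an intrinsic graph is determined by its $\pi_X$-projection together with the graph, and both images lie on intrinsic graphs, I need to argue $\Phi^\epsilon(p*f(p))$ actually lies on $\Gamma_{f^\epsilon}$. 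The cleanest route: $\Phi^\epsilon(\Gamma_f)$ is a $\Co^\infty$ surface through which the horizontal lines of $\Gamma_f$ are pushed to horizontal lines (contactness), and $\pi_X$ restricted to it is a diffeomorphism onto $\R^2$ (that is exactly the diffeomorphism property of $\phi^\epsilon$), so $\Phi^\epsilon(\Gamma_f)$ is the intrinsic graph of a well-defined function; computing that function along the pushed-forward integral curves via the chain-rule identities in the proof of Proposition~\ref{prop09141457} shows it equals $\bar f = f^\epsilon$. I expect the main obstacle to be precisely this last bookkeeping — matching the abstractly-constructed $\bar f$ from Proposition~\ref{prop09141457} with the concrete surface $\Phi^\epsilon(\Gamma_f)$ — which requires checking that the intrinsic-graph-defining data (value and $\grad$-derivative along horizontal curves) computed from $\Phi^\epsilon$ agree with those computed from $\phi^\epsilon$, and this is where Lemma~\ref{lem11191340} and the formulas $\frac{\dd}{\dd t}s(t) = \grad^f\phi_1(p_t)$, $\frac{\dd}{\dd t}\bar g_q(s(t)) = \grad^f\phi_2(p_t)$ from that proof must be invoked essentially verbatim.
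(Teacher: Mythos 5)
Your proposal is correct and follows essentially the same route as the paper's (very terse) proof: smoothness is immediate, the diffeomorphism property comes from $\Co^1$-closeness to the identity for small $\epsilon$ together with $\phi^\epsilon=\Id$ outside a compact set (the paper phrases this as a covering-map argument, you as a degree/proper-map argument), and the identity $\Phi^\epsilon(\Gamma_f)=\Gamma_{f^\epsilon}$ is read off from Lemma~\ref{lem11191340} by matching the pushed-forward horizontal curves with the integral curves of $\grad^{f^\epsilon}$. The detour through Proposition~\ref{prop11191058} in your last step is unnecessary (the direct argument you sketch afterwards is exactly what the paper invokes), but it is not a gap.
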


\subsection{Proof of Proposition \ref{prop11202024}}
We use an argument by contradiction. 
Assume that $\Phi$ is not a contact diffeomorphism.
Then there is an open and bounded set $\Omega\subset\HH$ such that for all $p\in\Omega$ it holds $\dd\Phi(H_p) \not\subset H_{\Phi(p)}$.
Thanks to the following lemma and Remark~\ref{rem10261128}, we get a contradiction with the property~\eqref{eq01221851}.

\begin{lemma}\label{lem10261002}
	Let $\Phi:\HH\to\HH$ be a diffeomorphism of class $\Co^2$.
	Let $\Omega\subset\HH$ be an open and bounded set such that for all $p\in\Omega$
	\[
	\dd\Phi(H_p) \not\subset H_{\Phi(p)} .
	\]
	Let $E\subset\HH$ be measurable.
	If $P(E;\Omega)<\infty$ and $P(\Phi(E);\Phi(\Omega))<\infty$, then $E$ has finite Riemannian perimeter in $\Omega$.
\end{lemma}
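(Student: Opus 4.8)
The plan is to read the two hypotheses as statements about the distributional derivatives of $\chi_E$. Recall that $P(E;\Omega)<\infty$ means exactly that $X\chi_E$ and $Y\chi_E$ are finite Radon measures on $\Omega$, whereas $E$ has finite Riemannian (equivalently, ambient Euclidean) perimeter in $\Omega$ precisely when $\de_x\chi_E,\de_y\chi_E,\de_z\chi_E$ are finite Radon measures on $\Omega$; since on the bounded set $\Omega$ the frames $(\de_x,\de_y,\de_z)$ and $(X,Y,Z)$ differ by an invertible change of basis with bounded $\Co^\infty$ entries, the latter is equivalent to asking that $X\chi_E$, $Y\chi_E$ and $Z\chi_E$ be finite Radon measures on $\Omega$. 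So the only thing to prove is that $Z\chi_E$ is a Radon measure on $\Omega$. The point is that $P(E;\Omega)<\infty$ bounds the derivatives of $\chi_E$ in the directions of the plane field $H$, that $P(\Phi(E);\Phi(\Omega))<\infty$ will be seen to bound them in the directions of the plane field $\tilde H_p:=(\dd\Phi_p)^{-1}\big(H_{\Phi(p)}\big)$, and that the transversality hypothesis says precisely $H_p\ne\tilde H_p$, so the two $2$-planes $H_p$ and $\tilde H_p$ span the $3$-dimensional space $T_p\HH$; between them they control every direction.

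First I would make the control coming from $\Phi(E)$ explicit. Introduce the $\Co^1$ vector fields $\tilde X,\tilde Y$ on $\Omega$ characterised by $\dd\Phi_p(\tilde X_p)=X_{\Phi(p)}$ and $\dd\Phi_p(\tilde Y_p)=Y_{\Phi(p)}$; these frame $\tilde H$, and by construction $\Phi_*\tilde X=X$, $\Phi_*\tilde Y=Y$ are horizontal. Since $\Phi$ is a $\Co^2$ diffeomorphism and $\overline\Omega$, $\overline{\Phi(\Omega)}$ are compact, the fields $\tilde X,\tilde Y$, their Euclidean divergences, the Jacobian $\tilde J:=|\det\dd\Phi^{-1}|$ and its first derivatives are all bounded on the relevant sets. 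Now fix $W\in\{\tilde X,\tilde Y\}$ and $\varphi\in\Co^\infty_c(\Omega)$, write $\langle W\chi_E,\varphi\rangle=-\int_E\div(\varphi W)\,\dd\scr L^3$, and change variables by $q=\Phi(p)$; using $\div X=\div Y=0$ together with the pointwise identity $(W\varphi)\circ\Phi^{-1}=X\psi$ (resp.\ $Y\psi$), where $\psi:=\varphi\circ\Phi^{-1}\in\Co^2_c(\Phi(\Omega))$, and integrating by parts once more on $\Phi(\Omega)$, one ends up with $\langle W\chi_E,\varphi\rangle$ expressed as a pairing of $X\chi_{\Phi(E)}$ or $Y\chi_{\Phi(E)}$ with a $\Co^1_c$ function of the form $(\text{bounded})\cdot\psi$, plus integrals over $E$ and $\Phi(E)$ of bounded functions times the test function. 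Consequently $|\langle W\chi_E,\varphi\rangle|\le C\|\varphi\|_\infty$ with $C$ finite and depending only on $P(\Phi(E);\Phi(\Omega))$ and on the $\Co^1$ data of $\Phi$. By the Riesz representation theorem $\tilde X\chi_E$ and $\tilde Y\chi_E$ are finite Radon measures on $\Omega$, and of course so are $X\chi_E$ and $Y\chi_E$ by hypothesis.

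It then remains to express $Z$ through these four fields. Since $H_p\ne\tilde H_p$ for every $p\in\Omega$, the vectors $X_p,Y_p,\tilde X_p,\tilde Y_p$ span $T_p\HH$; hence on a neighbourhood of each $p_0\in\Omega$ three of them already form a frame, and Cramer's rule writes $Z=aX+bY+c\,\tilde X+d\,\tilde Y$ there (one coefficient being $0$) with coefficients that are $\Co^1$ in $p$ and locally bounded on $\Omega$, the boundedness being ensured by $H_p\ne\tilde H_p$. Testing the identity $Z\chi_E=a\,X\chi_E+b\,Y\chi_E+c\,\tilde X\chi_E+d\,\tilde Y\chi_E$ against $\varphi\in\Co^\infty_c$ of small support and transferring each coefficient onto the test function leaves, besides pairings of the four finite measures above with bounded functions, only integrals over $E$ of bounded integrands; hence $Z\chi_E$ is a Radon measure on $\Omega$. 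Together with $X\chi_E,Y\chi_E$ this shows that $\de_x\chi_E,\de_y\chi_E,\de_z\chi_E$ are Radon measures on $\Omega$, i.e.\ $E$ has finite Riemannian perimeter in $\Omega$. (When the lemma is invoked inside the proof of Proposition~\ref{prop11202024} one may shrink $\Omega$ so that the transversality holds on $\overline\Omega$; then the coefficients $a,b,c,d$ are globally bounded and one gets the quantitative bound $P_{\mathrm R}(E;\Omega)\le C$. With transversality only on the open set $\Omega$ the argument still gives the conclusion locally on $\Omega$, which is all that is needed downstream.)

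I expect the change-of-variables bookkeeping in the second step to be the main technical obstacle: one must check that pushing the finite-mass measures $X\chi_{\Phi(E)},Y\chi_{\Phi(E)}$ back through $\Phi$ produces genuine finite measures rather than distributions of positive order, keeping track of every lower-order term coming from differentiating the Jacobian $\tilde J$ and the coefficients of $\Phi_*\tilde X,\Phi_*\tilde Y$. This is exactly where the $\Co^2$ (rather than merely $\Co^1$) regularity of $\Phi$ enters, since it makes $\dd\Phi^{-1}$ and $\tilde J$ of class $\Co^1$. The only other point deserving care is the passage from $H_p\ne\tilde H_p$ to bounded coefficients $a,b,c,d$, which is harmless on compact subsets of $\Omega$ but degenerates towards $\partial\Omega$ if the transversality itself degenerates there.
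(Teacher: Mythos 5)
Your proof is correct and follows essentially the same strategy as the paper's: in both arguments the transversality hypothesis is used to say that $H_p$ and $\tilde H_p:=(\dd\Phi_p)^{-1}\big(H_{\Phi(p)}\big)$ together span $T_p\HH$, and the identity $\div(\Phi_*W)=\div(W)\circ\Phi^{-1}\cdot J(\Phi^{-1})$ transfers the control coming from $P(\Phi(E);\Phi(\Omega))$ back to $\Omega$. The paper phrases this by splitting an arbitrary test field $U=V+W$ with $V$ horizontal and $\dd\Phi(W)$ horizontal and bounding the supremum defining $P_{\scr R}(E;\Omega)$ directly, which is just the dual formulation of your expression of $Z$ in the frame $\{X,Y,\tilde X,\tilde Y\}$; your explicit caveat about the coefficients possibly degenerating near $\de\Omega$ applies equally to the paper's uniform constant $K$ and is harmless for the intended application.
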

\begin{proof}
	We extend the scalar product  $\langle \cdot,\cdot \rangle$ to the whole $T\HH$ 
	in such a way that 
	$(X,Y,Z)$ is an orthonormal frame.
	The Riemannian perimeter is defined as
	\[
	P_{\scr R}(E;\Omega):=
	\sup\left\{ \int_E\div U\dd\scr L^3:\ U\in\Vec(T\HH),\ \spt U\ssubset\Omega,\ \|U\|\le1\right\} .
	\]
	
	Let $U\in\Vec(T\HH)$ with $\spt(U)\ssubset\Omega$ and $\|U\|\le 1$.
	Then there are $V,W\in\Vec(T\HH)$ with $\spt(V)\cup\spt(W)=\spt(U)$, $V+W=U$, $V(p)\in H_p$ for all $p$, $\|V\|\le K$ and $\|W\|\le K$, and $\Phi_*W(p)\in H_p$ for all $p$, where $K\ge0$ depends on $\Phi$ and $\Omega$, but not on $U$.
	
	Remind that, if $W$ is a smooth vector field on $\HH$, then\footnote{A sketch of the proof of this formula: it is clearer to show the dual formula $\div(\Phi^*W) = \div(W)\circ\Phi \cdot J(\Phi)$; consider $W$ as a 2-form and the divergence as the exterior derivative $\dd$; remind that $\dd\Phi^*=\Phi^*\dd$; the formula follows.}
	\[
	\div(\Phi_*W) = \div(W)\circ\Phi^{-1} \cdot J(\Phi^{-1}) .
	\]
	Therefore 
	$\int_E\div W\dd\LL^3 
	= \int_{\Phi(E)}(\div W)\circ\Phi^{-1} J\Phi^{-1}\dd\LL^3
	= \int_{\Phi(E)}\div(\Phi_*W) \dd \LL^3$.
	Moreover, since $\Omega$ is bounded, 
	we can assume $\|\dd\Phi(v)\|\le K\|v\|$ for all $v\in T\Omega$, where $K\ge0$ is the same constant as above.
	Therefore
	\begin{align*}
	\int_E \div U\dd\LL^3
	&= \int_E \div V\dd\LL^3 + \int_E \div W\dd\LL^3 \\
	&= \int_E \div V\dd\LL^3 + \int_{\Phi(E)}\div(\Phi_*W) \dd \LL^3 \\
	&\le K P(E;\Omega) + K^2 P(\Phi(E);\Phi(\Omega)).
	\end{align*}
	This implies that $P_{\scr R}(E;\Omega)\le K P(E;\Omega) + K^2 P(\Phi(E);\Phi(\Omega))<\infty$.
\end{proof}

\subsection{Proof of Proposition \ref{prop11191058}}\label{sec10261906}
In this case our choice of coordinates is not helpful. 
So, we consider the exponential coordinates of second kind $(\xi,\eta,\tau)\mapsto \exp(\eta B+\tau C)*\exp(\xi A)$, using the notation of Section~\ref{subs11191130}. 

We define the map $\Phi$ as
\[
\Phi\left( \xi,\eta,\tau \right) := 
\left(  \frac{\grad^\xi\phi_2}{\grad^\xi\phi_1} (\eta,\tau),\phi_1(\eta,\tau),\phi_2(\eta,\tau) \right)
\]
Clearly, $\Phi$ is well defined and smooth on the open set 
\[
\Omega:= \{( \xi,\eta,\tau): \grad^\xi\phi_1 (\eta,\tau) \neq0 \},
\]
$\Gamma_f\subset\Omega$ by the hypothesis of Proposition \ref{prop09141457} and
$\Phi(\Gamma_f) = \Gamma_{\bar f}$.
In these coordinates, the differential of $\Phi$ is
\[
\dd \Phi(\xi,\eta,\tau)
= 
\begin{pmatrix}
 	\de_\xi\left( \frac{\grad^\xi\phi_2}{\grad^\xi\phi_1} \right)
		& \de_\eta\left( \frac{\grad^\xi\phi_2}{\grad^\xi\phi_1} \right)
		& \de_\tau\left( \frac{\grad^\xi\phi_2}{\grad^\xi\phi_1} \right) \\
	0 & \de_\eta\phi_1 & \de_\tau\phi_1 \\
	0 & \de_\eta\phi_2 & \de_\tau\phi_2 
\end{pmatrix}
\]
Since $\phi$ is a diffeomorphism, $\Phi$ is a diffeomorphism if and only if $\de_\xi\left( \frac{\grad^\xi\phi_2}{\grad^\xi\phi_1} \right)\neq0$.
A short computation shows that
\[
\de_\xi\left( \frac{\grad^\xi\phi_2}{\grad^\xi\phi_1} \right)
= \frac{ \det(\dd\phi) }{ (\grad^\xi\phi_1)^2 },
\]
which is  non-zero.

Now, we need to show that $\Phi$ is a contact diffeomorphism.
In this system of coordinates, the left-invariant vector fields $X,Y,Z$ are written as
\[
\tilde X(\xi,\eta,\tau) = \de_\xi,
\qquad
\tilde Y(\xi,\eta,\tau) = \de_\eta + \xi\de_\tau,
\qquad
\tilde Z(\xi,\eta,\tau) = \de_\tau .
\]
We have
\[
\dd\Phi \left(\tilde X(\xi,\eta,\tau)\right)
= \de_\xi\left( \frac{\grad^\xi\phi_2}{\grad^\xi\phi_1} \right) \tilde X(\Phi(\xi,\eta,\tau)) ,
\]
\[
\dd \Phi \left(\tilde Y(\xi,\eta,\tau)\right)
= \grad^\xi \left( \frac{\grad^\xi\phi_2}{\grad^\xi\phi_1} \right) \tilde X(\Phi(\xi,\eta,\tau))
	+ \grad^\xi\phi_1 \tilde Y(\Phi(\xi,\eta,\tau)) .
\]
Therefore, $\dd\Phi(H)\subset H$. 
\qed

\subsection{Proof of Proposition \ref{prop11191059}}

The functions $\phi^\epsilon:\R^2\to\R^2$ are well defined and smooth for all $\epsilon\in\R$.
Since $\Phi^\epsilon$ and all its derivative converge to $\Id$ uniformly on $\HH$, there exists $\epsilon_0>0$ such that for all $\epsilon$ with $|\epsilon|<\epsilon_0$, the vector field $X$ is not tangent to $\Phi^\epsilon(\Gamma_f)$ at any point.
Therefore, $\det(\dd\phi^\epsilon)\neq0$ for all such $\epsilon$.
Since $\phi^\epsilon|_{\pi_X(K)} =\Id$, $\phi^\epsilon$ is a covering map and therefore it is a smooth diffeomorphism.

The last statement is a direct consequence Lemma \ref{lem11191340}.
\qed

\section{First Contact Variation}\label{sec10262029}
Similar formulas for the first and the second variation for the sub-Riemannian perimeter  in the Heisenberg group can be found in \cite{MR2472175,MR3558526,MR3044134,Montefalcone2015Intrinsic-varia}.

In all the formulas below, we set $\psi:=\grad^ff$.
\begin{proposition}\label{prop11170018}
	Let $f\in\Cow$ be such that $\Gamma_f$ is an area-minimizing surface.
	Then for all $V_1,V_2\in\Co^\infty_c(\R^2)$ it holds
	\begin{equation}\label{eq11162345}
	0= \int_{\R^2} \bigg[ \frac{\psi}{\sqrt{1+\psi^2}}\left( 
		-2 \psi \cdot \grad^fV_1 
		- f \cdot \Delta^fV_1 \right)
		+ \sqrt{1+\psi^2} \de_\eta V_1 
	\bigg]\dd\eta\dd\tau .
	\end{equation}
	and
	\begin{equation}\label{eq11162346}
	0 = \int_{\R^2}\bigg[ \frac{\psi}{\sqrt{1+\psi^2}} \Delta^fV_2
		+ \sqrt{1+\psi^2} \de_\tau V_2
		\bigg]\dd\eta\dd\tau .
	\end{equation}
\end{proposition}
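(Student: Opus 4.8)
The plan is to detect the criticality of $\Gamma_f$ through the comparison with graph competitors recalled at the end of Section~\ref{subs11241814}, and then to differentiate the area along the Lagrangian variations of Section~\ref{sec11251132}, using the closed formula~\eqref{eq10241705}. Fix $V_1,V_2\in\Co^\infty_c(\R^2)$ and a bounded open set $\omega\subset\R^2$ with $\spt V_1\cup\spt V_2\ssubset\omega$, and for $\epsilon\in\R$ put $\phi^\epsilon:=(\eta+\epsilon V_1,\,\tau+\epsilon V_2):\R^2\to\R^2$. For $|\epsilon|$ small this is a $\Co^\infty$-diffeomorphism of $\R^2$ equal to $\Id$ off $\spt V_1\cup\spt V_2$, so that $\phi^\epsilon(\omega)=\omega$, and $\grad^f\phi_1^\epsilon=1+\epsilon\grad^fV_1$ is nowhere zero, so~\eqref{eq07021739} holds. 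By Proposition~\ref{prop09141457} and Remark~\ref{rem10261125} we then get functions $f^\epsilon\in\Cow$ with $f^\epsilon=f$ off $\spt V_1\cup\spt V_2$; in particular $\{f\neq f^\epsilon\}\ssubset\omega$. (Equivalently, by Propositions~\ref{prop11191058} and~\ref{prop11191059}, this family is induced by a one-parameter family of contact diffeomorphisms, as in Theorem~\ref{teo10261843}.)

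Since $\Gamma_f$ is area-minimizing, the comparison recalled in Section~\ref{subs11241814}, applied with $g=f^\epsilon$, gives $\int_\omega\sqrt{1+\psi^2}\le\int_\omega\sqrt{1+(\grad^{f^\epsilon}f^\epsilon)^2}$ for every small $\epsilon$. Thus $\epsilon=0$ minimizes $a(\epsilon):=\int_\omega\sqrt{1+(\grad^{f^\epsilon}f^\epsilon)^2}\dd\eta\dd\tau$, which is finite because $\omega$ is bounded and $\grad^{f^\epsilon}f^\epsilon$ is continuous. It remains to show that $a$ is differentiable at $0$ and to compute $a'(0)$.

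Changing variables $q=\phi^\epsilon(p)$ (with $\det\dd\phi^\epsilon>0$) gives $a(\epsilon)=\int_\omega\sqrt{1+(\grad^{f^\epsilon}f^\epsilon\circ\phi^\epsilon)^2}\,\det\dd\phi^\epsilon\dd p$. By~\eqref{eq10241705}, the factor $\grad^{f^\epsilon}f^\epsilon\circ\phi^\epsilon$ is an explicit rational expression in $f$, $\psi=\grad^ff$ and the derivatives $\de^\alpha\phi_i^\epsilon$ (which are affine in $\epsilon$), whose denominator $(\grad^f\phi_1^\epsilon)^2$ stays bounded away from $0$ on $\omega$; hence the new integrand and its $\epsilon$-derivative are continuous in $(p,\epsilon)$ and locally bounded, so one may differentiate under the integral sign. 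Using $\grad^f\eta=1$, $\grad^f\tau=f$, $\Delta^f\eta=0$ and $\Delta^f\tau=\psi$, we get to first order $\grad^f\phi_1^\epsilon=1+\epsilon\grad^fV_1$, $\grad^f\phi_2^\epsilon=f+\epsilon\grad^fV_2$, $\Delta^f\phi_1^\epsilon=\epsilon\Delta^fV_1$, $\Delta^f\phi_2^\epsilon=\psi+\epsilon\Delta^fV_2$, so that~\eqref{eq10241705} gives
\[
\grad^{f^\epsilon}f^\epsilon\circ\phi^\epsilon=\psi+\epsilon\big(\Delta^fV_2-2\psi\,\grad^fV_1-f\,\Delta^fV_1\big)+O(\epsilon^2),
\]
while $\det\dd\phi^\epsilon=1+\epsilon(\de_\eta V_1+\de_\tau V_2)+O(\epsilon^2)$.

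Expanding $\sqrt{1+(\,\cdot\,)^2}$ and collecting the $\epsilon$-linear terms, the identity $a'(0)=0$ becomes
\[
0=\int_{\R^2}\bigg[\frac{\psi}{\sqrt{1+\psi^2}}\big(\Delta^fV_2-2\psi\,\grad^fV_1-f\,\Delta^fV_1\big)+\sqrt{1+\psi^2}\,(\de_\eta V_1+\de_\tau V_2)\bigg]\dd\eta\dd\tau,
\]
where the integral extends to all of $\R^2$ since the integrand is supported in $\spt V_1\cup\spt V_2$. Choosing $V_2=0$ yields~\eqref{eq11162345} and choosing $V_1=0$ yields~\eqref{eq11162346}. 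The one step that really needs care is the differentiation under the integral sign when $f\in\Cow\setminus\Co^1(\R^2)$, where $f^\epsilon$ has no a priori regularity in $\epsilon$: this is exactly what~\eqref{eq10241705} circumvents, by exhibiting $\grad^{f^\epsilon}f^\epsilon\circ\phi^\epsilon$ as an elementary function of $\epsilon$ with continuous coefficients.
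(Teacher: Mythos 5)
Your proof is correct and follows the paper's own route: a Lagrangian variation built from Proposition~\ref{prop09141457}, the change of variables by $\phi^\epsilon$, the first-order expansion of \eqref{eq10241705}, and the comparison inequality recalled at the end of Section~\ref{subs11241814}; the resulting expression for the derivative at $\epsilon=0$ coincides with the paper's $\gamma'(0)$, and the splitting into \eqref{eq11162345} and \eqref{eq11162346} by linearity in $(V_1,V_2)$ is the same. The one point where you genuinely diverge is the justification of differentiability in $\epsilon$: the paper works with an arbitrary smooth family $\phi^\epsilon$ with velocity $V$ and proves $\gamma\in\Co^1$ by approximating $f$ with smooth functions (Lemmas~\ref{lem10241912}, \ref{lem10241748} and~\ref{lem10251040}), whereas you fix the affine family $\phi^\epsilon=\Id+\epsilon V$, so that \eqref{eq10241705} exhibits $(\grad^{f^\epsilon}f^\epsilon)\circ\phi^\epsilon$ as a rational function of $\epsilon$ with continuous coefficients and denominator bounded away from zero, making differentiation under the integral sign elementary. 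This is a legitimate and cleaner argument for the first variation alone; the paper's heavier setup (general families and a formula for $\gamma'(\epsilon)$ at every $\epsilon$, established by approximation) is what it then reuses in Lemma~\ref{lem08202239} to differentiate a second time.
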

\begin{proposition}\label{prop11170019}
	Let $f\in\Co^\infty(\R^2)$ be such that for all $V_2\in\Co^\infty_c(\R^2)$ the equation \eqref{eq11162346} holds.
	Then \eqref{eq11162345} holds as well for all $V_1\in\Co^\infty_c(\R^2)$.
\end{proposition}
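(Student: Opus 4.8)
The plan is to exploit the fact that $f\in\Co^\infty(\R^2)$, so that the operators $\grad^f$ and $\Delta^f$ have their classical interpretations $\grad^f v=\de_\eta v+f\de_\tau v$ and $\Delta^f v=\grad^f\grad^f v$, and then to reduce \eqref{eq11162345} to \eqref{eq11162346} by an explicit algebraic manipulation of the integrands. The key observation is the pointwise identity, valid for smooth $f$, that for any $V_1\in\Co^\infty_c(\R^2)$ one has $\grad^f(fV_1)=\grad^f f\cdot V_1+f\grad^f V_1=\psi V_1+f\grad^f V_1$ and hence $\Delta^f(fV_1)=\grad^f(\psi V_1)+\grad^f(f\grad^f V_1)=\grad^f\psi\cdot V_1+\psi\grad^f V_1+\psi\grad^f V_1+f\Delta^f V_1$. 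Since $f$ is smooth and satisfies \eqref{eq11162346} for all test functions, I would first want to know that $\grad^f\psi=\Delta^f f=0$ (in the strong, classical sense); this is exactly the Euler--Lagrange consequence of \eqref{eq11162346} for smooth $f$, obtained by integrating \eqref{eq11162346} by parts to get $\grad^f\bigl(\psi/\sqrt{1+\psi^2}\bigr)=0$ in the classical sense, which for $\Co^\infty$ functions is equivalent to $\Delta^f f=0$ (as recorded after \eqref{eq10241526}). So the first step is: from \eqref{eq11162346} for all $V_2$, deduce $\Delta^f f=0$ pointwise, equivalently $\grad^f\psi\equiv 0$.

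With $\grad^f\psi\equiv 0$ in hand, the displayed identity collapses to $\Delta^f(fV_1)=2\psi\grad^f V_1+f\Delta^f V_1$, i.e.
\[
-2\psi\grad^f V_1-f\Delta^f V_1=-\Delta^f(fV_1).
\]
Substituting this into the right-hand side of \eqref{eq11162345}, the first bracket becomes $-\frac{\psi}{\sqrt{1+\psi^2}}\Delta^f(fV_1)$, so \eqref{eq11162345} is equivalent to
\[
0=\int_{\R^2}\biggl[-\frac{\psi}{\sqrt{1+\psi^2}}\Delta^f(fV_1)+\sqrt{1+\psi^2}\,\de_\eta V_1\biggr]\dd\eta\dd\tau.
\]
The second step is then to recognize this as an instance of \eqref{eq11162346}: I would like to apply \eqref{eq11162346} with the test function $V_2:=fV_1$, which lies in $\Co^\infty_c(\R^2)$ because $f$ is smooth and $V_1$ is compactly supported. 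That gives $\int\frac{\psi}{\sqrt{1+\psi^2}}\Delta^f(fV_1)=-\int\sqrt{1+\psi^2}\,\de_\tau(fV_1)$, so the displayed equation above reduces to proving
\[
\int_{\R^2}\sqrt{1+\psi^2}\,\bigl(\de_\eta V_1+\de_\tau(fV_1)\bigr)\dd\eta\dd\tau=0.
\]
But $\de_\eta V_1+\de_\tau(fV_1)=\de_\eta V_1+f\de_\tau V_1+(\de_\tau f)V_1=\grad^f V_1+(\de_\tau f)V_1$, and one checks $\grad^f(\sqrt{1+\psi^2}\,V_1)$ against this; alternatively, $\de_\eta V_1+\de_\tau(fV_1)$ is precisely the divergence (in the Euclidean sense on $\R^2$) of the vector field $(V_1,fV_1)$ only up to the term $\de_\tau f\cdot V_1$, so I would instead observe directly that $\de_\eta V_1+\de_\tau(fV_1)=\div_{(\eta,\tau)}\bigl(V_1\,\grad^f\bigr)$ acting suitably, i.e. it equals $\de_\eta V_1+\de_\tau(fV_1)$, whose integral against the weight $\sqrt{1+\psi^2}$ is handled by integrating by parts once more and invoking $\grad^f\psi=0$ again to kill the resulting derivative of the weight along $\grad^f$.

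The main obstacle I anticipate is bookkeeping rather than conceptual: making sure that the ``classical'' identities for $\grad^f$ and $\Delta^f$ are used consistently (the operator $\Delta^f$ in \eqref{eq10261049} is defined for general $f\in\Cow$, but here $f\in\Co^\infty$, so $\Delta^f=\grad^f\grad^f$ and all the Leibniz-type identities hold), and handling the final weighted integration by parts cleanly — the derivative of $\sqrt{1+\psi^2}$ that appears must be expressed as $\frac{\psi}{\sqrt{1+\psi^2}}\grad^f\psi=0$, which is where the hypothesis \eqref{eq11162346} (equivalently $\Delta^f f=0$) enters a second time. A secondary point to be careful about is that $V_2=fV_1$ need not have support inside any prescribed $\omega$, but \eqref{eq11162346} is assumed for \emph{all} $V_2\in\Co^\infty_c(\R^2)$, so this causes no trouble. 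Once these pieces are assembled, \eqref{eq11162345} follows for every $V_1\in\Co^\infty_c(\R^2)$, which is the assertion.
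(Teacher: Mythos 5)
Your choice of test function $V_2:=fV_1$ is exactly the paper's key idea, but your first step contains a genuine error: the hypothesis \eqref{eq11162346} for all $V_2\in\Co^\infty_c(\R^2)$ does \emph{not} imply $\grad^f\bigl(\psi/\sqrt{1+\psi^2}\bigr)=0$, hence does not imply $\Delta^ff=0$. Integrating \eqref{eq11162346} by parts requires moving \emph{two} derivatives off $V_2$ (since $\Delta^fV_2=\grad^f\grad^fV_2$), and what comes out is the second-order equation \eqref{eq10151603}, namely $(\grad^f+2\de_\tau f)\grad^f\bigl(\psi/\sqrt{1+\psi^2}\bigr)=0$ --- this is precisely the content of Proposition~\ref{prop11170020}. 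The implication only goes the other way: $\Delta^ff=0$ is sufficient for \eqref{eq11162346} but is not forced by it. Since your subsequent computations invoke $\grad^f\psi\equiv0$ twice (once to collapse $\Delta^f(fV_1)$ to $2\psi\grad^fV_1+f\Delta^fV_1$, and once more to kill the derivative of the weight $\sqrt{1+\psi^2}$ in the final integration by parts), the argument as written proves the proposition only under the strictly stronger hypothesis $\Delta^ff=0$, not under the stated one.

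The gap is repairable without changing your overall strategy, and the repair is what the paper actually does: keep the term $\grad^f\psi\cdot V_1$ in the expansion $\Delta^f(fV_1)=\grad^f\psi\,V_1+2\psi\grad^fV_1+f\Delta^fV_1$, observe that
\[
\frac{\psi\,\grad^f\psi}{\sqrt{1+\psi^2}}=\grad^f\bigl(\sqrt{1+\psi^2}\bigr),
\]
and group this with the term $\sqrt{1+\psi^2}\,\de_\tau f\,V_1$ coming from $\de_\tau(fV_1)$. By the adjoint identity $(\grad^f)^*=-\grad^f-\de_\tau f$ from Appendix~\ref{subs11241610}, the integral of $\bigl(\grad^f\sqrt{1+\psi^2}+\sqrt{1+\psi^2}\,\de_\tau f\bigr)V_1$ equals $-\int\sqrt{1+\psi^2}\,\grad^fV_1$, which cancels against the $+\sqrt{1+\psi^2}\,\grad^fV_1$ produced by rewriting $f\de_\tau V_1=\grad^fV_1-\de_\eta V_1$. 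No pointwise vanishing of $\grad^f\psi$ is ever needed; the problematic terms cancel in the integral identically.
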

\begin{proposition}\label{prop11170020}
	A function $f\in\Co^\infty(\R^2)$ satisfies \eqref{eq11162346}  for all $V_2\in\Co^\infty_c(\R^2)$ if and only if
	\begin{equation}\label{eq10151603}
		(\grad^f + 2\de_\tau f)
		\grad^f
		\left(\frac{\psi}{\sqrt{1+\psi^2}}\right) = 0 .
	\end{equation}
\end{proposition}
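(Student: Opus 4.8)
The plan is to start from the integral identity \eqref{eq11162346} and move all derivatives off the test function $V_2$ by integration by parts, so that the condition ``\eqref{eq11162346} holds for all $V_2\in\Co^\infty_c(\R^2)$'' becomes ``a certain continuous function vanishes identically.'' First I would recall that for $f\in\Co^\infty(\R^2)$ the operator $\Delta^f$ is genuinely second order, $\Delta^fV_2=\de_\eta^2V_2+2f\de_\eta\de_\tau V_2+f^2\de_\tau^2V_2+\psi\,\de_\tau V_2$ with $\psi=\grad^ff$, and that $\grad^f=\de_\eta+f\de_\tau$. Writing $w:=\psi/\sqrt{1+\psi^2}$ and $\sqrt{1+\psi^2}=1/\sqrt{1-w^2}$ (note $w^2<1$), \eqref{eq11162346} reads
\[
0=\int_{\R^2}\Big[w\,\Delta^fV_2+\tfrac1{\sqrt{1-w^2}}\,\de_\tau V_2\Big]\dd\eta\dd\tau .
\]
Since this must hold for every $V_2\in\Co^\infty_c$, it is equivalent to the statement that the formal $L^2$-adjoint applied to the pair $(w,1/\sqrt{1-w^2})$ vanishes: $(\Delta^f)^{*}w-\de_\tau\big(1/\sqrt{1-w^2}\big)=0$ in the sense of distributions, hence classically since $f$, and therefore $w$, is smooth.

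The core computation is thus to identify $(\Delta^f)^{*}$, the formal adjoint of $\Delta^f$ acting on the coefficient $w$. I would compute it term by term, integrating by parts twice in the second-order terms and once in the first-order term, being careful with the $\eta$- and $\tau$-derivatives falling on the variable coefficients $f$ and $\psi$. The expected outcome is that, after collecting terms, the adjoint reorganizes into the divergence-form expression $\grad^f(\grad^f w)$ plus lower-order corrections built from $\de_\eta f$, $\de_\tau f$ and $\grad^f f=\psi$; a convenient intermediate identity is $\Delta^f v=\grad^f(\grad^f v)$ for smooth $v$ (stated in the excerpt), which lets me treat $\Delta^f$ as the composition $\grad^f\circ\grad^f$ and use the first-order adjoint $(\grad^f)^{*}u=-\grad^f u-(\div\grad^f)u=-\grad^f u-\de_\tau f\,u$ repeatedly. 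Applying this twice gives $(\Delta^f)^{*}w=\grad^f\grad^f w+(\text{terms involving }\de_\tau f)\,\grad^f w+(\dots)w$, and the single $\de_\tau$ derivative hitting $1/\sqrt{1-w^2}$ contributes $\frac{w}{(1-w^2)^{3/2}}\de_\tau w=\sqrt{1+\psi^2}\,w\,\de_\tau w$, which I would rewrite in terms of $\de_\tau w$ and the algebraic relation between $w$ and $\psi$. The claim to reach is that all of this collapses exactly to
\[
(\grad^f+2\de_\tau f)\,\grad^f(w)=0 ,
\]
i.e.\ \eqref{eq10151603}, and conversely that \eqref{eq10151603} forces the bracket in \eqref{eq11162346} to be an exact $\grad^f$-type divergence, so that the integral vanishes for all compactly supported $V_2$ after integrating by parts back.

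The main obstacle I anticipate is purely bookkeeping: correctly handling the factor $\sqrt{1+\psi^2}$, which is nonlinear in $\psi=\grad^ff$, when a $\de_\tau$ or $\de_\eta$ derivative lands on it, and making sure the numerous lower-order terms (those with $\de_\tau f$, $\de_\eta f$, and $\psi$) cancel down to the single clean coefficient $2\de_\tau f$ in front of the outer $\grad^f$. A useful sanity check along the way is the linear case: if $f=\alpha\eta$ (a vertical plane) then $\psi\equiv\alpha$ is constant, $\grad^f w=0$, and both \eqref{eq11162346} and \eqref{eq10151603} hold trivially, which confirms the normalization of the constant $2$. Once the adjoint identity is pinned down, both implications of the ``if and only if'' follow immediately by the fundamental lemma of the calculus of variations, using smoothness of $f$ to pass from the distributional identity to the pointwise one, and this completes the proof.
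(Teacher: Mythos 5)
Your proposal follows essentially the same route as the paper: write $\Delta^f=\grad^f\circ\grad^f$, integrate by parts twice using $(\grad^f)^*=-\grad^f-\de_\tau f$ together with one $\de_\tau$-integration by parts, use the commutator identity $\de_\tau\grad^f=\grad^f\de_\tau+\de_\tau f\,\de_\tau$ (equivalently $\de_\tau\psi=\grad^f(\de_\tau f)+(\de_\tau f)^2$) to collapse the lower-order terms to the single coefficient $2\de_\tau f$, and conclude by the fundamental lemma of the calculus of variations. The only blemish is a harmless exponent slip in your sketch: $\de_\tau\bigl(1/\sqrt{1-w^2}\bigr)=\frac{w\,\de_\tau w}{(1-w^2)^{3/2}}=(1+\psi^2)^{3/2}\,w\,\de_\tau w=w\,\de_\tau\psi$, not $\sqrt{1+\psi^2}\,w\,\de_\tau w$.
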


\subsection{Proof of Proposition \ref{prop11170018}}\label{sec11171157}
Let $f\in\Cow$, $\omega\subset\R^2$ an open and bounded set and
 $V=(V_1,V_2):\R^2\to\R^2$ a smooth vector field with $\spt V\ssubset\omega$.
Let $\phi^\epsilon:\R^2\to\R^2$ be a smooth one-parameter family of diffeomorphism such that $\{\phi^\epsilon\neq\Id\}\subset\spt V$ for all $\epsilon>0$ and, for all $p\in\R^2$,
\[
\begin{cases}
 	\phi^0(p)=p \\
	\de_\epsilon\phi^\epsilon(p)|_{\epsilon=0} = V(p) .
\end{cases}
\]	
	
Notice that $\nabla^f\phi^\epsilon_1 = \de_\eta \phi^\epsilon_1 + f \de_\tau\phi^\epsilon_1$ is not zero for $\epsilon$ small enough, because $\nabla^f\phi^\epsilon_1$ converges to 1 uniformly as $\epsilon\to0$.
Hence, by Proposition \ref{prop09141457}, there is an interval $I=(-\hat\epsilon,\hat\epsilon)$ such that the function given by
\begin{equation}\label{10241847}
	f_\epsilon\circ\phi^\epsilon = \frac{\grad^f\phi_2^\epsilon}{\grad^f\phi_1^\epsilon}
\end{equation}
is well defined for all $\epsilon\in I$.
Define $\gamma:I\to\R$ as
\begin{align*}
\gamma(\epsilon) &:= \int_\omega \sqrt{1+(\nabla^{f_\epsilon}f_\epsilon)^2} \dd \eta \dd \tau  \\
&= \int_\omega \sqrt{1+((\nabla^{f_\epsilon}f_\epsilon)\circ\phi^\epsilon)^2} J_{\phi^\epsilon} \dd \eta \dd \tau,
\end{align*}
where we performed a change of coordinates via $\phi^\epsilon$ and 
\[
J_{\phi^\epsilon} = \de_\eta \phi^\epsilon_1 \de_\tau\phi^\epsilon_2 - \de_\tau\phi^\epsilon_1 \de_\eta \phi^\epsilon_2 
\]
is the Jacobian of $\phi^\epsilon$.
Using equality~\eqref{eq10241705} and Lemma~\ref{lem10241748}, it is immediate to see that $\gamma$ is continuous.

\begin{lemma}\label{lem10251040}
	The function $\gamma:I\to\R$ is continuously differentiable and
\begin{multline}\label{eq10241902}
 	\gamma'(\epsilon) =
	 \int_\omega \bigg[ \frac{((\nabla^{f_{\epsilon}}f_{\epsilon})\circ\phi^\epsilon)}{\sqrt{1+((\nabla^{f_{\epsilon}}f_{\epsilon})\circ\phi^\epsilon)^2}} A_f(\epsilon)  J_{\phi^\epsilon} 
	+ \\
	+ \sqrt{1+((\nabla^{f_{\epsilon}}f_{\epsilon})\circ\phi^\epsilon)^2} \de_\epsilon J_{\phi^\epsilon}
	\bigg] \dd \eta \dd \tau ,
\end{multline}
where
\begin{multline}\label{eq10251042}
A_f(\epsilon) 
:= \frac{\Delta^f\de_\epsilon\phi^\epsilon_2}{(\grad^f\phi^\epsilon_1)^2}
	-2 \frac{\Delta^f\phi^\epsilon_2}{(\grad^f\phi^\epsilon_1)^3} \grad^f\de_\epsilon\phi^\epsilon_1 +\\ 
	- \frac{\grad^f\de_\epsilon\phi^\epsilon_2}{(\grad^f\phi^\epsilon_1)^3} \Delta^f\phi^\epsilon_1
 	+3 \frac{\grad^f\phi^\epsilon_2}{(\grad^f\phi^\epsilon_1)^4} \grad^f\de_\epsilon\phi^\epsilon_1\cdot \Delta^f\phi^\epsilon_1
	- \frac{\grad^f\phi^\epsilon_2}{(\grad^f\phi^\epsilon_1)^3} \Delta^f\de_\epsilon\phi^\epsilon_1 .
\end{multline}
\end{lemma}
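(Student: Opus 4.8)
The plan is to reduce $\gamma$ to a bounded integral whose integrand depends on $\epsilon$ only through the fixed smooth family $\phi^\epsilon$, to make that integrand explicit, to differentiate it by hand, and then to invoke the Leibniz rule for differentiation under the integral sign. First I would use Proposition~\ref{prop09141457}: since $\grad^f\phi^\epsilon_1$ is nonvanishing on $\R^2$ for every $\epsilon\in I$ by the very choice of $I$, equation~\eqref{eq10241705} applies with $\phi=\phi^\epsilon$ and $\bar f=f_\epsilon$ and gives
\[
h(\epsilon,\cdot):=(\grad^{f_\epsilon}f_\epsilon)\circ\phi^\epsilon
= \frac{\Delta^f\phi^\epsilon_2}{(\grad^f\phi^\epsilon_1)^2}
 - \frac{\grad^f\phi^\epsilon_2}{(\grad^f\phi^\epsilon_1)^3}\,\Delta^f\phi^\epsilon_1 ,
\]
so that $\gamma(\epsilon)=\int_\omega\sqrt{1+h(\epsilon,\cdot)^2}\,J_{\phi^\epsilon}\,\dd\eta\dd\tau$. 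Outside $\spt V$ one has $\phi^\epsilon=\Id$, hence there $h=\psi$, $J_{\phi^\epsilon}=1$ and $\de_\epsilon\phi^\epsilon=0$; thus both the $\epsilon$-dependence of the integrand and the support of its $\epsilon$-derivative are confined to the fixed compact set $\spt V$.

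The second step is the computation of $\de_\epsilon h$. The key observations are that $\de_\epsilon$ commutes with $\de_\eta$ and $\de_\tau$, by joint smoothness of $(\epsilon,\eta,\tau)\mapsto\phi^\epsilon(\eta,\tau)$, and that $f$ and $\psi=\grad^ff$ do not depend on $\epsilon$; since by~\eqref{eq10261049} the operators $\grad^f$ and $\Delta^f$ are linear in their argument with $\epsilon$-independent coefficients built from $1$, $f$, $f^2$ and $\psi$, this yields
\[
\de_\epsilon\big(\grad^f\phi^\epsilon_i\big)=\grad^f\big(\de_\epsilon\phi^\epsilon_i\big),
\qquad
\de_\epsilon\big(\Delta^f\phi^\epsilon_i\big)=\Delta^f\big(\de_\epsilon\phi^\epsilon_i\big),
\qquad i=1,2 ,
\]
where $\Delta^f\de_\epsilon\phi^\epsilon_i$ is well defined because $\de_\epsilon\phi^\epsilon_i$ is $\Co^\infty$ in $(\eta,\tau)$. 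Applying the quotient and product rules to $h$, using these identities and the fact that $(\grad^f\phi^\epsilon_1)^k$ never vanishes on $I$, I obtain exactly $\de_\epsilon h=A_f(\epsilon)$ with $A_f$ as in~\eqref{eq10251042}. Since $\de_\epsilon J_{\phi^\epsilon}$ follows immediately from $J_{\phi^\epsilon}=\de_\eta\phi^\epsilon_1\,\de_\tau\phi^\epsilon_2-\de_\tau\phi^\epsilon_1\,\de_\eta\phi^\epsilon_2$, the integrand $F(\epsilon,\cdot):=\sqrt{1+h^2}\,J_{\phi^\epsilon}$ is differentiable in $\epsilon$ with
\[
\de_\epsilon F=\frac{h}{\sqrt{1+h^2}}\,A_f(\epsilon)\,J_{\phi^\epsilon}+\sqrt{1+h^2}\,\de_\epsilon J_{\phi^\epsilon} ,
\]
which is precisely the integrand appearing in~\eqref{eq10241902}.

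The last step is to pass the derivative inside the integral. On $\overline{I'}\times\overline{\omega}$, for each $I'\ssubset I$, the functions $h$, $A_f(\epsilon)$, $J_{\phi^\epsilon}$ and $\de_\epsilon J_{\phi^\epsilon}$ are continuous — here one uses only that $f$ and $\psi$ are continuous and that $\grad^f\phi^\epsilon_1$ is continuous and bounded away from zero on the compact set $\overline{I'}\times\spt V$ — so $F$ and $\de_\epsilon F$ are bounded there, and $\de_\epsilon F$ vanishes off $\spt V$. The classical Leibniz rule then gives $\gamma\in\Co^1(I)$ with $\gamma'(\epsilon)=\int_\omega\de_\epsilon F\,\dd\eta\dd\tau$, which is~\eqref{eq10241902}. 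I expect this last step to be the only genuinely delicate point: because $f$ is merely $\Cow$, the integrand is continuous but not smooth in $(\eta,\tau)$, so one must verify the hypotheses of the Leibniz rule with some care — which do hold, precisely because the non-smooth ingredients $f$ and $\psi$ are $\epsilon$-independent while the $\epsilon$-variation is localized in the fixed compact $\spt V$. Should one wish to avoid this bookkeeping, an alternative is to prove the statement first for $f\in\Co^\infty(\R^2)$, where it is classical, then take smooth approximants $f_k\to f$ in $\Cow$ by Lemma~\ref{lem10241912} and pass to the limit using Lemma~\ref{lem10241748}, obtaining $\gamma_k\to\gamma$ and $\gamma_k'\to$ the right-hand side of~\eqref{eq10241902}, locally uniformly in $\epsilon$; the only extra care needed there is to choose the interval on which $f_{k,\epsilon}$ is defined independently of $k$, which follows from the locally uniform convergence $\grad^{f_k}\phi^\epsilon_1\to\grad^f\phi^\epsilon_1$ together with $\grad^f\phi^0_1\equiv1$.
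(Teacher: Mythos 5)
Your proposal is correct, but your primary route differs from the paper's. The paper proves the formula first for $f\in\Co^\infty(\R^2)$, where $\gamma$ is smooth and the differentiation is unproblematic, and then handles general $f\in\Cow$ by the approximation scheme of Lemma~\ref{lem10241912}: it introduces $\gamma_k$ built from smooth approximants $f_k$, uses Lemma~\ref{lem10241748} to get $A_{f_k}\to A_f$ uniformly, and concludes $\gamma\in\Co^1(I)$ with $\gamma'=\eta$ from the uniform convergence of $\gamma_k$ and $\gamma_k'$ (with exactly the uniformity-of-$I$-in-$k$ caveat you flag in your last sentence — so your ``alternative'' is in fact the paper's argument). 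Your main argument instead works directly with $f\in\Cow$, exploiting the observation that the entire $\epsilon$-dependence of the integrand sits in the jointly smooth family $\phi^\epsilon$, while the non-smooth data $f$ and $\psi=\grad^ff$ are $\epsilon$-independent continuous coefficients of the linear operators $\grad^f$ and $\Delta^f$; hence $\de_\epsilon$ commutes with $\grad^f$ and $\Delta^f$ on $\phi^\epsilon_i$, the pointwise derivative of the integrand is jointly continuous and supported in $\spt V$, and the classical Leibniz rule applies. This is a legitimate and arguably more economical proof of this particular lemma, since it bypasses the approximation machinery entirely; what the paper's route buys is a uniform template that is reused verbatim for the second variation in Lemma~\ref{lem08202239} and again in Theorem~\ref{teo11241753}, where approximation by Lagrangian solutions is unavoidable anyway.
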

\begin{proof}[Proof of Lemma~\ref{lem10251040}]
	First, suppose $f\in\Co^\infty(\R^2)$.
	Then $\gamma\in\Co^\infty(I)$ and
	\begin{multline*}
	\gamma'(\epsilon) 
	= \int_\omega\bigg[ \frac{((\nabla^{f_\epsilon}f_\epsilon)\circ\phi^\epsilon)}{\sqrt{1+((\nabla^{f_\epsilon}f_\epsilon)\circ\phi^\epsilon)^2}}\de_\epsilon((\nabla^{f_\epsilon}f_\epsilon)\circ\phi^\epsilon) J_{\phi^\epsilon} 
		+ \\
		+ \sqrt{1+((\nabla^{f_\epsilon}f_\epsilon)\circ\phi^\epsilon)^2} \de_\epsilon J_{\phi^\epsilon}
		\bigg]\dd \eta \dd \tau .
	\end{multline*}
	Applying the formula in Proposition \ref{prop09141457} and the identity $\grad^f\de_\epsilon = \de_\epsilon\grad^f$, one obtains
	\[
	\de_\epsilon((\nabla^{f_\epsilon}f_\epsilon)\circ\phi^\epsilon) 
	= A_f(\epsilon)
	\]
	and thus formula~\eqref{eq10241902} holds in the smooth case.
	
	Next, suppose $f=f_\infty$ is the limit in $\Cow$ of a sequence $f_k\in\Co^\infty(\R^2)$, as in Lemma~\ref{lem10241912}.
	Notice that $\nabla^{f_k}\phi^\epsilon_1$ is not zero for $\epsilon$ small enough and $k$ large enough.
	Indeed, $|\grad^{f_k}\phi^\epsilon_1-\grad^{f_\infty}\phi^\epsilon_1| \le \|f_k-f\|_{\LL^\infty(\spt V)} \|\de_\tau\phi_1^\epsilon\|_{\LL^\infty(\spt V)}$ and $\grad^{f_\infty}\phi^\epsilon_1$ converges to one uniformly on $\R^2$ as $\epsilon\to0$.
	Hence, there is an interval $I\subset\R$ centered at zero such that
	the functions $f_{k,\epsilon}$ as in Proposition~\ref{prop09141457} are well defined for $\epsilon\in I$ and, without loss of generality, for all $k\in\N\cup\{\infty\}$.
	For $k\in\N\cup\{\infty\}$, define $\gamma_k:I\to\R$ as
	\[
	\gamma_k(\epsilon) := \int_\omega \sqrt{1+(\grad^{f_{k,\epsilon}}f_{k,\epsilon})^2} \dd \eta \dd \tau  
	\]
	Define also the function $\eta:I\to\R$ as the right-hand side of \eqref{eq10241902}.
	From Lemma~\ref{lem10241748}, it follows that $\{A_{f_k}\}_{k\in\N}$ converges to $A_f$ uniformly on $I$.
	Therefore, we have that $\{\gamma_k\}_{k\in\N}$ and $\{\gamma_k'\}_{k\in\N}$ converge to $\gamma$ and $\eta$ uniformly on $I$.
	We conclude that $\gamma\in\Co^1(I)$ and $\gamma'=\eta$.
\end{proof}

In order to evaluate $\gamma'(0)$, notice that 
\begin{align*}
\grad^f\phi^0_1 &= 1 
	& \grad^f\phi^0_2 &= f \\
\grad^f\de_\epsilon\phi^\epsilon_1|_{\epsilon=0} &= \grad^fV_1 
	& \grad^f\de_\epsilon\phi^\epsilon_2|_{\epsilon=0} &= \grad^fV_2 \\
\Delta^f\phi^0_1 &= 0 
	& \Delta^f\phi^0_2 &= \psi \\
\Delta^f\de_\epsilon\phi^\epsilon_1 |_{\epsilon=0} &= \Delta^f V_1 
	& \Delta^f\de_\epsilon\phi^\epsilon_2 |_{\epsilon=0} &= \Delta^f V_2 .
\end{align*}
Therefore
\[
A_f(0) 
= \Delta^fV_2 
-2 \psi \grad^fV_1 
- f \Delta^fV_1 .
\]
Moreover, using the facts $\de_\tau\phi^0_1 = \de_\eta \phi^0_2 = 0$ and $\de_\eta \phi^0_1 = \de_\tau\phi^0_2 = 1$ and that the derivatives $\de_\epsilon$, $\de_\eta $ and $\de_\tau$ commute, we have
\[
\de_\epsilon J_{\phi^\epsilon}|_{\epsilon=0} 
= \de_\eta V_1 + \de_\tau V_2 .
\]
Putting all together, we obtain
\begin{multline*}
\gamma'(0) 
= \int_\omega \bigg[ \frac{\psi}{\sqrt{1+\psi^2}}\left(\Delta^fV_2
	-2 \psi \cdot \grad^fV_1 
	- f \cdot \Delta^fV_1 \right)
	+  \\ 
	+ \sqrt{1+\psi^2} (\de_\eta V_1 + \de_\tau V_2)
	\bigg] \dd \eta \dd \tau .
\end{multline*}
Since $\Gamma_f$ is an area-minimizing surface, then $\gamma'(0)=0$ for all $V_1,V_2\in\Co^\infty_c(\R^2)$.
Since this expression is linear in $V$, then we obtain both conditions \eqref{eq11162345} and \eqref{eq11162346}.
\qed

\subsection{Proof of Proposition \ref{prop11170019}}
Let $V_1\in\Co^\infty(\R^2)$ and set $V_2:=fV_1\in\Co^\infty_c(\R^2)$.
Then
\begin{multline*}
0 = \int_{\R^2}\bigg[ \frac{\psi}{\sqrt{1+\psi^2}} \Delta^fV_2 
		+ \sqrt{1+\psi^2} \de_\tau V_2
		\bigg]\dd\eta\dd\tau \\
= \int_{\R^2}\bigg[ \frac{\psi}{\sqrt{1+\psi^2}} (\grad^f\psi V_1 + 2\psi\grad^fV_1 + f\Delta^fV_1) + \hfill \\ \hfill
		+ \sqrt{1+\psi^2} (\de_\tau fV_1 + f \de_\tau V_1)
		\bigg]\dd\eta\dd\tau \\
= \int_{\R^2}\bigg[ \frac{\psi}{\sqrt{1+\psi^2}}   (2\psi\grad^fV_1 + f\Delta^fV_1) + \hfill \\ 
		+ \left(\frac{\psi\grad^f\psi}{\sqrt{1+\psi^2}} +\sqrt{1+\psi^2} \de_\tau f \right) V_1 + \\ \hfill
		+ \sqrt{1+\psi^2} (\grad^fV_1 - \de_\eta V_1)
		\bigg]\dd\eta\dd\tau \\
= \int_{\R^2}\bigg[ \frac{\psi}{\sqrt{1+\psi^2}}   (2\psi\grad^fV_1 + f\Delta^fV_1)
		- \sqrt{1+\psi^2} \grad^fV_1 + \hfill \\ \hfill
		+ \sqrt{1+\psi^2} (\grad^fV_1 - \de_\eta V_1)
		\bigg]\dd\eta\dd\tau \\
= \int_{\R^2}\bigg[ \frac{\psi}{\sqrt{1+\psi^2}}   (2\psi\grad^fV_1 + f\Delta^fV_1)
		- \sqrt{1+\psi^2}  \de_\eta V_1)
		\bigg]\dd\eta\dd\tau .\hfill
\end{multline*}
Hence \eqref{eq11162345} holds true for $V_1$ as well.
\qed

\subsection{Proof of Proposition \ref{prop11170020}}
We have for all $V_2\in\Co^\infty_c(\R^2)$
\begin{multline*}
\int_{\R^2}\left[ \frac{\psi}{\sqrt{1+\psi^2}} \grad^f\grad^fV_2 + \sqrt{1+\psi^2} \de_\tau V_2 \right]\dd\eta\dd\tau \\
= -\int_{\R^2}\left[ 
	\grad^f\left(\frac{\psi}{\sqrt{1+\psi^2}}\right) \grad^fV_2 
	+ \frac{\de_\tau f\psi}{\sqrt{1+\psi^2}} \grad^fV_2
	+ \de_\tau (\sqrt{1+\psi^2}) V_2 \right]\dd\eta\dd\tau \\
= \int_{\R^2}\left[ 
	\grad^f\grad^f\left(\frac{\psi}{\sqrt{1+\psi^2}}\right) V_2 
	+ \de_\tau f\grad^f\left(\frac{\psi}{\sqrt{1+\psi^2}}\right) V_2 + \right.\hfill\\
	+ \grad^f(\de_\tau f) \frac{\psi}{\sqrt{1+\psi^2}} V_2
	+ \de_\tau f \grad^f\left(\frac{\psi}{\sqrt{1+\psi^2}}\right) V_2 
	+ (\de_\tau f)^2 \frac{\psi}{\sqrt{1+\psi^2}}V_2 +\\ \hfill\left.
	- \frac{\psi}{\sqrt{1+\psi^2}} \de_\tau \psi V_2 \right]\dd\eta\dd\tau .
\end{multline*}
Therefore, using the fact that $\de_\tau \psi = \grad^f(\de_\tau f) + (\de_\tau f)^2$, we get that \eqref{eq11162346} is equivalent to
\begin{equation*}
 	 \grad^f\grad^f\left(\frac{\psi}{\sqrt{1+\psi^2}}\right)  
	+ 2\de_\tau f\cdot\grad^f\left(\frac{\psi}{\sqrt{1+\psi^2}}\right) = 0 .
\end{equation*}
\qed

\section{Second Contact Variation}\label{sec10262031}

Similarly to the previous sections, we set $\psi:=\grad^ff$.

\begin{proposition}\label{prop11171154}
	If the intrinsic graph of $f\in\Cow$ is an area-minimizing surface, then, for all $V_1,V_2\in\Co^\infty_c(\R^2)$, we have:
	\begin{multline}\label{eq11170006}
	0\le \II_f(V_1,V_2) :=
	\int_{\R^2}\bigg[  \frac{  (\Delta^fV_2 -2 \psi \grad^fV_1 - f \Delta^fV_1)^2 }{ (1+\psi^2)^{\frac32} } +  \\
	+ \frac{ \psi }{ (1+\psi^2)^{\frac12} } 
	\left(
	- 4  \Delta^f V_2 \cdot \grad^f V_1  
	- 2 \grad^f V_2 \cdot \Delta^f V_1 +  \right.\hfill\\ \hfill\left.
	+ 6  f \cdot \grad^f V_1 \cdot \Delta^f V_1  
	+ 6  \psi \cdot (\grad^f V_1)^2  
	\right) + \\
	+ 2 \frac{ \psi }{ (1+\psi^2)^{\frac12} }
		 (\Delta^fV_2 -2 \psi \grad^fV_1 - f \Delta^fV_1)  (\de_\eta V_1 + \de_\tau V_2) +\\
	+ 2 (1+\psi^2)^{\frac12}   (\de_\eta V_1 \de_\tau V_2 - \de_\tau V_1 \de_\eta V_2) 
	\bigg]\dd\eta\dd\tau .
	\end{multline}
\end{proposition}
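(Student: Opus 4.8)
The plan is to compute the second derivative at $\epsilon=0$ of the area functional $\gamma$ introduced in Section~\ref{sec11171157}, identify it with $\II_f(V_1,V_2)$, and read off positivity from area-minimality.

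First I would set up the variation exactly as in Section~\ref{sec11171157}: fix $V=(V_1,V_2)$ with $V_1,V_2\in\Co^\infty_c(\R^2)$, a bounded open set $\omega$ with $\spt V\ssubset\omega$, and a smooth one-parameter family of diffeomorphisms $\phi^\epsilon:\R^2\to\R^2$ with $\phi^0=\Id$, $\de_\epsilon\phi^\epsilon|_{\epsilon=0}=V$ and $\{\phi^\epsilon\neq\Id\}\subset\spt V$. Let $f_\epsilon\in\Cow$ be the function~\eqref{10241847} (well defined for small $\epsilon$ by Proposition~\ref{prop09141457}, since $\grad^f\phi_1^\epsilon\to1$), and set $\gamma(\epsilon)=\int_\omega\sqrt{1+(\grad^{f_\epsilon}f_\epsilon)^2}\dd\eta\dd\tau$. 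Since $\phi^\epsilon=\Id$ off $\spt V$ we get $f_\epsilon=f$ off $\spt V$, so $\{f_\epsilon\neq f\}\ssubset\omega$; as $\Gamma_f$ is area-minimizing, the inequality recalled at the end of Section~\ref{subs11241814} yields $\gamma(\epsilon)\geq\gamma(0)$ for $\epsilon$ near $0$. By the same approximation scheme used in the proof of Lemma~\ref{lem10251040} --- reduce to $f\in\Co^\infty(\R^2)$ via Lemma~\ref{lem10241912}, differentiate twice under the integral sign in the smooth case, and pass to the limit with Lemma~\ref{lem10241748}, noting that $A_{f_k}(\epsilon)$ and $\de_\epsilon A_{f_k}(\epsilon)$ converge uniformly on $\omega\times I$ --- one checks that $\gamma\in\Co^2$ near $0$. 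Hence $\gamma'(0)=0$ (already established in Section~\ref{sec11171157}) and $\gamma''(0)\geq0$, so it only remains to prove $\gamma''(0)=\II_f(V_1,V_2)$.

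Next I would differentiate the expression~\eqref{eq10241902} for $\gamma'(\epsilon)$ once more. Writing $w_\epsilon=(\grad^{f_\epsilon}f_\epsilon)\circ\phi^\epsilon$, recall from the proof of Lemma~\ref{lem10251040} that $\de_\epsilon w_\epsilon=A_f(\epsilon)$, whence $\de_\epsilon\big(w/\sqrt{1+w^2}\big)=A_f(\epsilon)/(1+w^2)^{3/2}$ and $\de_\epsilon\sqrt{1+w^2}=w\,A_f(\epsilon)/\sqrt{1+w^2}$. Using this together with the values at $\epsilon=0$ collected just before~\eqref{eq10251042} (so $w_0=\psi$, $A_f(0)=\Delta^fV_2-2\psi\grad^fV_1-f\Delta^fV_1$, $J_{\phi^0}=1$), with $W:=(W_1,W_2):=\de_\epsilon^2\phi^\epsilon|_{\epsilon=0}\in\Co^\infty_c(\R^2)\times\Co^\infty_c(\R^2)$, with the Jacobian identities $\de_\epsilon J_{\phi^\epsilon}|_{\epsilon=0}=\de_\eta V_1+\de_\tau V_2$ and
\[
\de_\epsilon^2 J_{\phi^\epsilon}|_{\epsilon=0}=\de_\eta W_1+\de_\tau W_2+2(\de_\eta V_1\de_\tau V_2-\de_\tau V_1\de_\eta V_2),
\]
and with the term-by-term differentiation of the five summands of~\eqref{eq10251042} (using $\grad^f\de_\epsilon=\de_\epsilon\grad^f$, $\Delta^f\de_\epsilon=\de_\epsilon\Delta^f$ and $\Delta^f\phi^0_1=0$), which gives
\begin{multline*}
\de_\epsilon A_f(\epsilon)|_{\epsilon=0}=\big(\Delta^fW_2-2\psi\grad^fW_1-f\Delta^fW_1\big)\\
+\big(6\psi(\grad^fV_1)^2+6f\grad^fV_1\Delta^fV_1-4\grad^fV_1\Delta^fV_2-2\grad^fV_2\Delta^fV_1\big),
\end{multline*}
I would arrive at an explicit formula for $\gamma''(0)$ as the integral over $\omega$ of $\frac{(A_f(0))^2}{(1+\psi^2)^{3/2}}$, of $\frac{\psi}{\sqrt{1+\psi^2}}\,\de_\epsilon A_f(\epsilon)|_{\epsilon=0}$, of $2\frac{\psi A_f(0)}{\sqrt{1+\psi^2}}(\de_\eta V_1+\de_\tau V_2)$ (two equal contributions, from $\de_\epsilon$ hitting $J_{\phi^\epsilon}$ in $\frac{w}{\sqrt{1+w^2}}A_f(\epsilon)J_{\phi^\epsilon}$ and $\de_\epsilon$ hitting $\sqrt{1+w^2}$ in $\sqrt{1+w^2}\,\de_\epsilon J_{\phi^\epsilon}$), and of $\sqrt{1+\psi^2}\,\de_\epsilon^2 J_{\phi^\epsilon}|_{\epsilon=0}$.

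Finally I would sort this integrand. The terms that depend on the ``acceleration'' $W$ collect precisely into
\[
\int_\omega\Big[\frac{\psi}{\sqrt{1+\psi^2}}\big(\Delta^fW_2-2\psi\grad^fW_1-f\Delta^fW_1\big)+\sqrt{1+\psi^2}\,(\de_\eta W_1+\de_\tau W_2)\Big]\dd\eta\dd\tau,
\]
which is the first contact variation of $\scr A$ in the direction $W$ and hence vanishes, $\Gamma_f$ being area-minimizing (Section~\ref{sec11171157}). What survives is exactly the four-term integrand of~\eqref{eq11170006}: the square $(\Delta^fV_2-2\psi\grad^fV_1-f\Delta^fV_1)^2/(1+\psi^2)^{3/2}$; the $\psi(1+\psi^2)^{-1/2}$-weighted quadratic bracket, coming from the second line of $\de_\epsilon A_f(\epsilon)|_{\epsilon=0}$ above; the mixed term $2\psi(1+\psi^2)^{-1/2}(\Delta^fV_2-2\psi\grad^fV_1-f\Delta^fV_1)(\de_\eta V_1+\de_\tau V_2)$; and $2\sqrt{1+\psi^2}(\de_\eta V_1\de_\tau V_2-\de_\tau V_1\de_\eta V_2)$. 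Replacing the domain $\omega$ by $\R^2$ (legitimate, since every integrand is supported in $\omega$) gives $\gamma''(0)=\II_f(V_1,V_2)$, and $\gamma''(0)\geq0$ concludes the proof. The main obstacle is the bookkeeping in differentiating $A_f(\epsilon)$ and checking that the $W$-dependent remainder reassembles into the first contact variation; once that cancellation is recognised, everything else is routine.
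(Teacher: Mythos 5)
Your proposal is correct and follows essentially the same route as the paper: differentiate $\gamma$ twice via the Lagrangian variation of Proposition~\ref{prop09141457}, justify $\gamma\in\Co^2$ by smooth approximation (Lemmas~\ref{lem10241912} and~\ref{lem10241748}), compute $\de_\epsilon A_f(\epsilon)|_{\epsilon=0}$ (your expression agrees with the paper's $B_f(0)$), and discard the $W$-dependent terms because they reassemble into the vanishing first variation. The only cosmetic difference is that the paper records the full formula for $B_f(\epsilon)$ at general $\epsilon$ to run the approximation argument, whereas you only evaluate it at $\epsilon=0$; this does not affect correctness.
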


\subsection{Proof of Proposition \ref{prop11171154}}
Let $\omega\subset\R^2$ be an open and bounded set and
 $V=(V_1,V_2):\R^2\to\R^2$ a smooth vector field with $\spt V\ssubset\omega$.
Let $\phi^\epsilon=(\phi^\epsilon_1,\phi^\epsilon_2):\R^2\to\R^2$ be a smooth one-parameter family of diffeomorphism such that 
$\{\phi^\epsilon\neq\Id\}\subset\spt V$ for all $\epsilon>0$ and, for all $p\in\R^2$,
\[
\begin{cases}
 	\phi^0(p)=p \\
	\de_\epsilon\phi^\epsilon(p)|_{\epsilon=0} = V(p) .
\end{cases}
\]	
Define $W_i(p) := \de_\epsilon^2\phi^\epsilon_i(p)|_{\epsilon=0}$.
Then $W=(W_1,W_2):\R^2\to\R^2$ is a smooth vector field with $\spt W\ssubset\omega$.

As for the first variation, see Section \ref{sec11171157}, define
\[
\gamma(\epsilon) := \int_\omega \sqrt{1+(\nabla^{f_\epsilon}f_\epsilon)^2} \dd \eta \dd \tau . 
\]

\begin{lemma}\label{lem08202239}
	The function $\gamma:I\to\R$ is twice continuously differentiable and 
	\begin{multline}\label{eq10251033}
	\gamma''(\epsilon) = \int_\omega \bigg[  \frac{  A_f(\epsilon)^2 }{ (1+(\grad^{f_\epsilon}f_\epsilon\circ\phi^\epsilon)^2)^{\frac32} } J_{\phi^\epsilon} 
	+ \frac{ (\grad^{f_\epsilon}f_\epsilon\circ\phi^\epsilon) B_f(\epsilon) }{ (1+(\grad^{f_\epsilon}f_\epsilon\circ\phi^\epsilon)^2)^{\frac12} } J_{\phi^\epsilon} + \\
	+ 2 \frac{ (\grad^{f_\epsilon}f_\epsilon\circ\phi^\epsilon) A_f(\epsilon) }{ (1+(\grad^{f_\epsilon}f_\epsilon\circ\phi^\epsilon)^2)^{\frac12} } \de_\epsilon J_{\phi^\epsilon} 
	+ (1+(\grad^{f_\epsilon}f_\epsilon\circ\phi^\epsilon)^2)^{\frac12} \de_\epsilon^2 J_{\phi^\epsilon} \bigg]\dd y\dd z ,
	\end{multline}
	where $A_f(\epsilon)$ is defined as in \eqref{eq10251042} and
	\begin{multline*}
	B_f(\epsilon) := 
	\frac{ \Delta^f\de_\epsilon^2\phi_2^\epsilon }{ (\grad^f\phi^\epsilon_1)^2 } 
	- 2 \frac{ \Delta^f\de_\epsilon \phi^\epsilon_2 \cdot \grad^f\de_\epsilon\phi_1^\epsilon }{ (\grad^f\phi^\epsilon_1)^3 } + \\
	- 2 \frac{ \Delta^f\de_\epsilon \phi^\epsilon_2 \cdot \grad^f\de_\epsilon\phi_1^\epsilon }{ (\grad^f\phi^\epsilon_1)^3 }  
	- 2  \frac{ \Delta^f \phi^\epsilon_2 \cdot \grad^f\de_\epsilon^2\phi_1^\epsilon }{ (\grad^f\phi^\epsilon_1)^3 } 
	+ 6 \frac{ \Delta^f \phi^\epsilon_2 \cdot (\grad^f\de_\epsilon\phi_1^\epsilon)^2 }{ (\grad^f\phi^\epsilon_1)^4 } + \\
	-  \frac{ \grad^f\de_\epsilon^2 \phi^\epsilon_2 \cdot \Delta^f\phi_1^\epsilon }{ (\grad^f\phi^\epsilon_1)^3 } 
	- \frac{ \grad^f\de_\epsilon \phi^\epsilon_2 \cdot \Delta^f\de_\epsilon\phi_1^\epsilon }{ (\grad^f\phi^\epsilon_1)^3 } 
	+ 3 \frac{ \grad^f\de_\epsilon \phi^\epsilon_2 \cdot \Delta^f\phi_1^\epsilon \cdot \grad^f\de_\epsilon\phi_1^\epsilon}{ (\grad^f\phi^\epsilon_1)^4 } + \\
	+ 3 \frac{ \grad^f\de_\epsilon \phi^\epsilon_2 \cdot \grad^f\de_\epsilon\phi_1^\epsilon \cdot \Delta^f\phi_1^\epsilon }{ (\grad^f\phi^\epsilon_1)^4 } 
	+ 3 \frac{ \grad^f \phi^\epsilon_2 \cdot \grad^f\de_\epsilon^2\phi_1^\epsilon \cdot \Delta^f\phi_1^\epsilon }{ (\grad^f\phi^\epsilon_1)^4 } +\hfill \\
	\hfill + 3 \frac{ \grad^f \phi^\epsilon_2 \cdot \grad^f\de_\epsilon\phi_1^\epsilon \cdot \Delta^f\de_\epsilon\phi_1^\epsilon }{ (\grad^f\phi^\epsilon_1)^4 } 
	- 12 \frac{ \grad^f \phi^\epsilon_2 \cdot (\grad^f\de_\epsilon\phi_1^\epsilon)^2 \cdot \Delta^f\phi_1^\epsilon }{ (\grad^f\phi^\epsilon_1)^5 } + \\
	- \frac{ \grad^f \de_\epsilon\phi^\epsilon_2 \cdot \Delta^f\de_\epsilon\phi_1^\epsilon }{ (\grad^f\phi^\epsilon_1)^3 }
	- \frac{ \grad^f \phi^\epsilon_2  \cdot \Delta^f\de_\epsilon^2\phi_1^\epsilon }{ (\grad^f\phi^\epsilon_1)^3 }
	+ 3 \frac{ \grad^f \phi^\epsilon_2  \cdot \Delta^f\de_\epsilon\phi_1^\epsilon \cdot \grad^f\de_\epsilon\phi_1^\epsilon}{ (\grad^f\phi^\epsilon_1)^4 } .
	\end{multline*}
\end{lemma}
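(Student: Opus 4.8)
The plan is to imitate the two-step argument used for Lemma~\ref{lem10251040}: first prove the formula when $f$ is smooth by differentiating \eqref{eq10241902} once more under the integral sign, and then pass to a general $f\in\Cow$ by smooth approximation.

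For the smooth case, I would take $f\in\Co^\infty(\R^2)$, so that (exactly as in the proof of Lemma~\ref{lem10251040}) $f_\epsilon$ depends smoothly on $\epsilon$, $\gamma\in\Co^\infty(I)$, and it suffices to differentiate the integrand of \eqref{eq10241902} in $\epsilon$. Writing $\psi_\epsilon:=(\grad^{f_\epsilon}f_\epsilon)\circ\phi^\epsilon$, the two ingredients are $\de_\epsilon\psi_\epsilon=A_f(\epsilon)$ — already obtained in the proof of Lemma~\ref{lem10251040} — and $\de_\epsilon A_f(\epsilon)=B_f(\epsilon)$. The latter is a termwise differentiation of the explicit expression \eqref{eq10251042} for $A_f(\epsilon)$, using the product and quotient rules together with the commutations $\grad^f\de_\epsilon=\de_\epsilon\grad^f$ and $\Delta^f\de_\epsilon=\de_\epsilon\Delta^f$ (legitimate because $f$ does not depend on $\epsilon$); one checks that the outcome is exactly the displayed expression for $B_f(\epsilon)$. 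Granting these, I would compute
\[
\de_\epsilon\!\left(\frac{\psi_\epsilon}{\sqrt{1+\psi_\epsilon^2}}\right)=\frac{A_f(\epsilon)}{(1+\psi_\epsilon^2)^{3/2}},\qquad
\de_\epsilon\!\left(\sqrt{1+\psi_\epsilon^2}\right)=\frac{\psi_\epsilon A_f(\epsilon)}{\sqrt{1+\psi_\epsilon^2}},
\]
apply the product rule to the two summands of the integrand of \eqref{eq10241902}, and collect terms; this produces precisely the integrand of \eqref{eq10251033}. Differentiation under the integral sign is justified since $\spt V\ssubset\omega$ and every factor is smooth in $(\epsilon,\eta,\tau)$ near $\{0\}\times\overline\omega$.

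For a general $f=f_\infty\in\Cow$, I would pick, using Lemma~\ref{lem10241912}, a sequence $f_k\in\Co^\infty(\R^2)$ with $f_k\to f$ in $\Cow$. As in Lemma~\ref{lem10251040}, shrinking $I$ if needed, the estimate $|\grad^{f_k}\phi_1^\epsilon-\grad^{f_\infty}\phi_1^\epsilon|\le\|f_k-f\|_{\LL^\infty(\spt V)}\|\de_\tau\phi_1^\epsilon\|_{\LL^\infty(\spt V)}$ together with $\grad^{f_\infty}\phi_1^\epsilon\to1$ uniformly ensures that $\grad^{f_k}\phi_1^\epsilon$ stays bounded away from $0$ uniformly in $\epsilon\in I$ and $k\in\N\cup\{\infty\}$, so that the functions $f_{k,\epsilon}$ of Proposition~\ref{prop09141457} and the quantities $A_{f_k}(\epsilon),B_{f_k}(\epsilon)$ are all defined. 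Since $A_{f_k}$ and $B_{f_k}$ are assembled from $\grad^{f_k}$ and $\Delta^{f_k}$ applied to the fixed smooth functions $\phi_i^\epsilon,\de_\epsilon\phi_i^\epsilon,\de_\epsilon^2\phi_i^\epsilon$ by sums, products and quotients with denominators bounded away from $0$, Lemma~\ref{lem10241748} gives $A_{f_k}\to A_f$ and $B_{f_k}\to B_f$ uniformly on $I$, and likewise $(\grad^{f_{k,\epsilon}}f_{k,\epsilon})\circ\phi^\epsilon\to\psi_\epsilon$ uniformly on $I$. Setting $\gamma_k(\epsilon):=\int_\omega\sqrt{1+(\grad^{f_{k,\epsilon}}f_{k,\epsilon})^2}\dd\eta\dd\tau$, it then follows that $\gamma_k\to\gamma$, that $\gamma_k'$ converges uniformly on $I$ to the right-hand side of \eqref{eq10241902} (Lemma~\ref{lem10251040}), and that $\gamma_k''$ converges uniformly on $I$ to the right-hand side of \eqref{eq10251033}. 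The standard theorem on differentiating uniform limits then yields $\gamma\in\Co^2(I)$ together with the claimed formula for $\gamma''$.

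The hard part is purely bookkeeping: verifying $\de_\epsilon A_f(\epsilon)=B_f(\epsilon)$ amounts to matching the some fifteen terms produced by the product and quotient rules against the printed expression for $B_f$, which is tedious but conceptually routine. Everything else is a direct transcription of the first-variation argument.
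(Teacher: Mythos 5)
Your proposal is correct and follows essentially the same two-step scheme as the paper: establish the formula for smooth $f$ by differentiating under the integral sign and identifying $\de_\epsilon A_f(\epsilon)=B_f(\epsilon)$ (equivalently, $\de_\epsilon^2\big((\grad^{f_\epsilon}f_\epsilon)\circ\phi^\epsilon\big)=B_f(\epsilon)$, which is what the paper checks), and then pass to general $f\in\Cow$ via the approximation of Lemma~\ref{lem10241912}, using Lemma~\ref{lem10241748} to get uniform convergence of $A_{f_k},B_{f_k}$ and hence of $\gamma_k''$. The only stylistic difference is that you differentiate the already-established first-variation integrand \eqref{eq10241902} rather than writing $\gamma''$ from scratch, which is the same computation arranged slightly differently.
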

\begin{proof}[Proof of Lemma~\ref{lem08202239}]
	This lemma is a continuation of Lemma~\ref{lem10251040}.
	
	First, suppose $f\in\Co^\infty(\R^2)$.
	Then, the function $\gamma$ is smooth and its second derivative is
	\begin{multline*}
	\gamma''(\epsilon)
	= \int_\omega\bigg[  \frac{  (\de_\epsilon(\grad^{f_\epsilon}f_\epsilon\circ\phi^\epsilon))^2 }{ (1+(\grad^{f_\epsilon}f_\epsilon\circ\phi^\epsilon)^2)^{\frac32} } J_{\phi^\epsilon} + \hfill \\
	+ \frac{ (\grad^{f_\epsilon}f_\epsilon\circ\phi^\epsilon) \de_\epsilon^2(\grad^{f_\epsilon}f_\epsilon\circ\phi^\epsilon) }{ (1+(\grad^{f_\epsilon}f_\epsilon\circ\phi^\epsilon)^2)^{\frac12} } J_{\phi^\epsilon} + \\
	+ 2 \frac{ (\grad^{f_\epsilon}f_\epsilon\circ\phi^\epsilon) \de_\epsilon(\grad^{f_\epsilon}f_\epsilon\circ\phi^\epsilon) }{ (1+(\grad^{f_\epsilon}f_\epsilon\circ\phi^\epsilon)^2)^{\frac12} } \de_\epsilon J_{\phi^\epsilon} +\\
\hfill	+ (1+(\grad^{f_\epsilon}f_\epsilon\circ\phi^\epsilon)^2)^{\frac12} \de_\epsilon^2 J_{\phi^\epsilon} \bigg]\dd y\dd z .
	\end{multline*}
	One checks by direct computation that
	\begin{align*}
	\de_\epsilon(\grad^{f_\epsilon}f_\epsilon\circ\phi^\epsilon) 
		&= A_f(\epsilon) ,\\
	\de_\epsilon^2(\grad^{f_\epsilon}f_\epsilon\circ\phi^\epsilon) 
		&= B_f(\epsilon) ,
	\end{align*}
	thus \eqref{eq10251033} is proven in the smooth case.
	
	Next, suppose $f=f_\infty$ is the limit in $\Cow$ of a sequence $f_k\in\Co^\infty(\R^2)$, as in Lemma~\ref{lem10241912}.
	Define $f_{k,\epsilon}$ and $I\subset\R$ and $\gamma_k:I\to\R$ as in the proof of Lemma~\ref{lem10251040}.
	Define also $\eta:I\to\R$ as the right-hand side of \eqref{eq10251033}.
	By Lemma~\ref{lem10241748}, $\{A_{f_k}\}_{k\in\N}$ and $\{B_{f_k}\}_{k\in\N}$ converge to $A_f$ and $B_f$ uniformly on $I$.
	Therefore, we have that the convergences $\gamma_k\to \gamma$ and $\gamma_k'\to\gamma'$ and $\gamma_k''\to\eta$ are uniform on $I$.
	We conclude that $\gamma\in\Co^2(I)$ and $\gamma''=\eta$.
\end{proof}

Next, one can directly check that
\begin{multline*}
\gamma''(0)
 = \int_\omega \bigg[ \frac{  (\Delta^fV_2 -2 \psi \grad^fV_1 - f \Delta^fV_1)^2 }{ (1+\psi^2)^{\frac32} } +  \\
+ \frac{ \psi }{ (1+\psi^2)^{\frac12} } 
( \Delta^fW_2 
-  f  \cdot \Delta^fW_1
- 2   \psi \cdot \grad^fW_1
- 4  \Delta^f V_2 \cdot \grad^f V_1  
- 2 \grad^f V_2 \cdot \Delta^f V_1 + \hfill \\ \hfill
+ 6  f \cdot \grad^f V_1 \cdot \Delta^f V_1  
+ 6  \psi \cdot (\grad^f V_1)^2  
) + \\
+ 2 \frac{ \psi }{ (1+\psi^2)^{\frac12} } (\Delta^fV_2 -2 \psi \grad^fV_1 - f \Delta^fV_1) (\de_\eta V_1 + \de_\tau V_2) +\\
\hfill	+ (1+\psi^2)^{\frac12}  (\de_\eta W_1 + \de_\tau W_2 + 2 (\de_\eta V_1 \de_\tau V_2 - \de_\tau V_1 \de_\eta V_2)) \bigg]\dd \eta \dd \tau  .
\end{multline*}

Finally, if $\Gamma_f$ is an area-minimizing surface, then $\gamma'(0)=0$ and $\gamma''(0)\ge 0$.
Notice that the terms containing $W_1$ and $W_2$ in the expression of $\gamma''(0)$ are zero because $\gamma'(0)=0$.
So, the second variation formula \eqref{eq11170006} is proven.
\qed

\section{Contact variations in the case $\Delta^ff=0$}\label{sec10262032}
In this final section we prove our main result.
We show that there is a quite large class of functions in $\Cow$ that satisfy both conditions on the first and second contact variation.
Since we know that the only intrinsic graphs of smooth functions that are area minimizers are the vertical planes, our result shows that variations along contact diffeomorphisms are not selective enough.

As usual, we set $\psi:=\grad^ff$.

\begin{lemma}\label{lem11171413}
	Let $f\in\Co^\infty(\R^2)$ be such that $\Delta^ff=0$.
	Then
	\begin{multline*}
	\II_f(V_1,V_2)
	= \int_{\R^2} \bigg[ \frac{  (\Delta^fV_2 -2 \psi \grad^fV_1 - f \Delta^fV_1)^2 }{ (1+\psi^2)^{\frac32} } + \\
	+ \de_\tau \left(\frac{ \psi }{ (1+\psi^2)^{\frac12} }\right)
	\left(
		\grad^fV_2 - \grad^f(fV_1)
	\right)^2 \bigg]\dd\eta\dd\tau .
	\end{multline*}
\end{lemma}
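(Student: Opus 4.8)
The plan is to start from the formula for $\II_f$ in Proposition~\ref{prop11171154} and simplify it using the hypothesis $\Delta^ff=0$, which for smooth $f$ means precisely $\grad^f\psi=0$: the function $\psi$, and hence also $h:=\psi/\sqrt{1+\psi^2}$ and $\sqrt{1+\psi^2}$, are constant along the integral curves of $\grad^f$, so that $\grad^fh=\grad^f\sqrt{1+\psi^2}=0$, while $\de_\tau h=\de_\tau\psi/(1+\psi^2)^{3/2}$. First I would record that, since $\grad^f$ is a derivation and $\grad^ff=\psi$, for every $V_1\in\Co^\infty_c(\R^2)$ one has $\grad^f(fV_1)=\psi V_1+f\grad^fV_1$, and applying $\grad^f$ once more and using $\grad^f\psi=0$,
\[
\Delta^f(fV_1)=2\psi\,\grad^fV_1+f\,\Delta^fV_1 .
\]
Consequently, putting $U:=V_2-fV_1\in\Co^\infty_c(\R^2)$, the quantity $\Delta^fV_2-2\psi\grad^fV_1-f\Delta^fV_1$ appearing both in $\II_f$ and in the assertion equals $\Delta^fU$, and the combination $\grad^fV_2-\grad^f(fV_1)$ in the assertion equals $\grad^fU$. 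Hence the lemma is equivalent to
\[
\II_f(V_1,V_2)=\int_{\R^2}\frac{(\Delta^fU)^2}{(1+\psi^2)^{3/2}}\dd\eta\dd\tau+\int_{\R^2}\de_\tau h\,(\grad^fU)^2\dd\eta\dd\tau ,
\]
and since the first integral equals the integral of the first summand in the formula of Proposition~\ref{prop11171154}, it remains to show that $\int_{\R^2}\mathcal R\dd\eta\dd\tau=\int_{\R^2}\de_\tau h\,(\grad^fU)^2\dd\eta\dd\tau$, where $\mathcal R$ denotes the sum of all the remaining integrand terms of $\II_f$.

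Since $\mathcal R$ is homogeneous quadratic in $(V_1,V_2)$, I would substitute $V_2=U+fV_1$ and split $\mathcal R$ into its part with $V_1=0$, its part with $U=0$ (i.e. $V_2=fV_1$), and the part bilinear in $U$ and $V_1$. Setting $V_1=0$ annihilates every term of $\mathcal R$ containing $\grad^fV_1$, $\Delta^fV_1$ or the Jacobian, leaving only $2h\,(\grad^f\grad^fU)\,\de_\tau U$, and
\[
\int_{\R^2}2h\,(\grad^f\grad^fU)\,\de_\tau U\dd\eta\dd\tau=\int_{\R^2}\de_\tau h\,(\grad^fU)^2\dd\eta\dd\tau
\]
follows from a single integration by parts: $h$ passes through $\grad^f$ because $\grad^fh=0$, the commutator $[\grad^f,\de_\tau]=-(\de_\tau f)\de_\tau$ replaces $\grad^f\de_\tau U$ by $\de_\tau\grad^fU-(\de_\tau f)\de_\tau U$, the $(\de_\tau f)$-term is exactly cancelled by the correction coming from the $\grad^f$-adjunction, and $-\int_{\R^2}h\,\de_\tau\big((\grad^fU)^2\big)=\int_{\R^2}\de_\tau h\,(\grad^fU)^2$. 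The part with $U=0$ corresponds to the variation vector field $V_1\grad^f$, which only reparametrises $\Gamma_f$ along its own ruling lines and therefore leaves its area unchanged, so it contributes nothing; the bilinear part contributes nothing either. Establishing these two vanishings is where the work lies.

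All the computations above, and the two vanishings, use only: the adjunctions $\int_{\R^2}\grad^f(a)\,b=-\int_{\R^2}a\,(\grad^f b+(\de_\tau f)b)$, $\int_{\R^2}\de_\eta(a)\,b=-\int_{\R^2}a\,\de_\eta b$ and $\int_{\R^2}\de_\tau(a)\,b=-\int_{\R^2}a\,\de_\tau b$ for compactly supported integrands; the commutators $[\grad^f,\de_\eta]=-(\de_\eta f)\de_\tau$ and $[\grad^f,\de_\tau]=-(\de_\tau f)\de_\tau$; the identity $\de_\tau\psi=\grad^f(\de_\tau f)+(\de_\tau f)^2$; and the vanishings $\grad^f\psi=\grad^fh=\grad^f\sqrt{1+\psi^2}=0$; the Jacobian term $2\sqrt{1+\psi^2}(\de_\eta V_1\de_\tau V_2-\de_\tau V_1\de_\eta V_2)$ being first rewritten as $2\sqrt{1+\psi^2}\big(\de_\eta(V_1\de_\tau V_2)-\de_\tau(V_1\de_\eta V_2)\big)$. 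The mechanism is always the one seen above: after integrating by parts so that every derivative of a test function sits inside one $\grad^f$ or one $\de_\tau$, the $(\de_\tau f)$-corrections generated by the $\grad^f$-adjunction cancel term by term against those produced by the commutators, and nothing survives that involves a first-order coefficient of $f$ except $\de_\tau\psi$, which is exactly what assembles $\de_\tau h$. The only genuine obstacle is bookkeeping: Proposition~\ref{prop11171154} has a long list of terms and the substitution $V_2=U+fV_1$ multiplies their number, so verifying that the $U=0$ part and the bilinear part integrate to zero is lengthy; it is, however, entirely mechanical once the facts just listed are in hand, and a convenient organisation is to collect those two groups of terms separately, integrate by parts, and check the cancellation of the coefficients term by term.
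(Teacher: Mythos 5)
Your reduction is sound as far as it goes: for smooth $f$ with $\grad^f\psi=0$ one indeed has $\Delta^f(fV_1)=2\psi\grad^fV_1+f\Delta^fV_1$, so with $U:=V_2-fV_1$ the first summand of $\II_f$ is $(\Delta^fU)^2(1+\psi^2)^{-3/2}$ and the target is $\int\de_\tau\big(\psi(1+\psi^2)^{-1/2}\big)(\grad^fU)^2$; and your treatment of the $V_1=0$ piece is correct and is exactly the paper's computation of the term $2\,\psi(1+\psi^2)^{-1/2}\,\Delta^fV_2\,\de_\tau V_2$ via the adjunction identity \eqref{eq01121606}. The difficulty is that this is the \emph{easy} third of the statement. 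You explicitly defer the other two thirds — the vanishing of $\int\mathcal R(V_1,fV_1)$ and of the cross term bilinear in $U$ and $V_1$ — calling them ``entirely mechanical,'' and these are precisely where the content of the lemma lies: the paper's proof is essentially nothing but these cancellations (its Lemmas \ref{lem11171449} and \ref{lem11171446}, each a page of carefully ordered integrations by parts in which the $\de_\tau f$ corrections from the $\grad^f$-adjoint must cancel against those from the commutator $[\grad^f,\de_\tau]$). Asserting that a delicate cancellation is mechanical is not the same as exhibiting it, and nothing in your write-up certifies that the bookkeeping closes.

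Two further points. For the $U=0$ piece your geometric heuristic (the flow of $V_1\grad^f$ preserves each integral curve of $\grad^f$, hence $f_\epsilon=f$ and the area is constant in $\epsilon$) can in fact be made rigorous, but only after observing that the $W$-terms in $\gamma''(0)$ drop because the first variation of $f$ vanishes — which for $\Delta^ff=0$ requires Propositions \ref{prop11170019} and \ref{prop11170020}; you do not say this. For the bilinear piece you offer no argument at all, and no soft argument is available: the only abstract route (Cauchy--Schwarz for a positive semidefinite form, given $\II_f(V_1,fV_1)=0$) presupposes $\II_f\ge0$, which is the \emph{conclusion} drawn from this lemma in Theorem \ref{teo11241753} and would be circular here. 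So the cross term must be computed by hand, exactly as in the paper's Lemma \ref{lem11171446}, and until that computation is carried out the proof is an outline, not a proof.
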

The proof is very technical and it is postponed to the last section below.

\begin{theorem}\label{teo11241753}
	Let $f\in\Cow$ be such that $\Delta^ff=0$ in weak Lagrangian sense.
	Then both equalities \eqref{eq11162345} and \eqref{eq11162346} and also the inequality \eqref{eq11170006} are satisfied for all $V_1,V_2\in\Co^\infty_c(\R^2)$.
\end{theorem}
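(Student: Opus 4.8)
The plan is to prove the statement first for smooth $f$ satisfying $\Delta^f f=0$ in the classical sense, and then to deduce the general case by the approximation of Lemma~\ref{lem10251843}.

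\emph{Smooth case.} Let $g\in\Co^\infty(\R^2)$ with $\Delta^g g=0$, and set $\psi_g:=\grad^g g$. The first observation is that $\grad^g\psi_g=\grad^g\grad^g g=\Delta^g g=0$, so by the chain rule $\grad^g\!\big(\psi_g/\sqrt{1+\psi_g^2}\big)=(1+\psi_g^2)^{-3/2}\,\grad^g\psi_g=0$; hence \eqref{eq10151603} holds trivially. Proposition~\ref{prop11170020} then yields \eqref{eq11162346} for all $V_2\in\Co^\infty_c(\R^2)$, and Proposition~\ref{prop11170019} upgrades this to \eqref{eq11162345} for all $V_1\in\Co^\infty_c(\R^2)$. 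For the second-variation inequality I would invoke Lemma~\ref{lem11171413}, which writes $\II_g(V_1,V_2)$ as the integral of a square divided by $(1+\psi_g^2)^{3/2}$ plus $\de_\tau\big(\psi_g/\sqrt{1+\psi_g^2}\big)$ times a square. The first summand is manifestly nonnegative; for the second, note that a smooth solution of $\Delta^g g=0$ is in particular a weak Lagrangian solution, so Lemma~\ref{lem11191504} applies and $\tau\mapsto\psi_g(\eta,\tau)$ is non-decreasing, whence $\de_\tau\big(\psi_g/\sqrt{1+\psi_g^2}\big)=(1+\psi_g^2)^{-3/2}\de_\tau\psi_g\ge0$. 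Therefore $\II_g(V_1,V_2)\ge0$.

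\emph{General case.} Fix $V_1,V_2\in\Co^\infty_c(\R^2)$ and let $f\in\Cow$ solve $\Delta^f f=0$ in weak Lagrangian sense. By Lemma~\ref{lem10251843} choose $f_k\in\Co^\infty(\R^2)$ with $\Delta^{f_k}f_k=0$ and $f_k\to f$ in $\Cow$. Applying Lemma~\ref{lem10241748} to the constant-in-$k$ test fields $V_1,V_2$ and to the sequence $f_k$ itself, the functions $\grad^{f_k}V_i$, $\Delta^{f_k}V_i$ and $\psi_k:=\grad^{f_k}f_k$ converge, together with $f_k$, uniformly on the compact set $\spt V_1\cup\spt V_2$ to $\grad^f V_i$, $\Delta^f V_i$, $\psi:=\grad^f f$ and $f$; the remaining data $\de_\eta V_i,\de_\tau V_i$ do not depend on $k$. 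All the integrands in \eqref{eq11162345}, \eqref{eq11162346} and \eqref{eq11170006} are continuous functions of exactly these quantities and vanish outside $\spt V_1\cup\spt V_2$, so the corresponding integrals for $f_k$ converge to those for $f$. Since \eqref{eq11162345} and \eqref{eq11162346} hold as equalities for every $f_k$ and $\II_{f_k}(V_1,V_2)\ge0$ for every $k$, passing to the limit gives \eqref{eq11162345}, \eqref{eq11162346} and $\II_f(V_1,V_2)\ge0$ for $f$.

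I expect the only genuinely hard part to be the algebraic identity of Lemma~\ref{lem11171413}, which rewrites $\II_f$ in manifestly nonnegative form and is treated separately; granted that, the proof of this theorem reduces to the uniform-convergence bookkeeping of the approximation step and to the appeal to the monotonicity of $\tau\mapsto\psi(\eta,\tau)$ from Lemma~\ref{lem11191504} to fix the sign of the second term.
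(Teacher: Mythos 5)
Your proposal is correct and follows essentially the same route as the paper: verify \eqref{eq10151603} for the smooth approximants (via $\grad^{f_k}\psi_k=0$), apply Propositions~\ref{prop11170020} and~\ref{prop11170019} to get the first-variation equalities, use Lemma~\ref{lem11171413} together with the monotonicity of $\tau\mapsto\psi(\eta,\tau)$ from Lemma~\ref{lem11191504} for the sign of the second variation, and pass to the limit with Lemmas~\ref{lem10251843} and~\ref{lem10241748}. The only difference is cosmetic: you spell out explicitly why \eqref{eq10151603} holds in the smooth case, which the paper leaves implicit.
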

\begin{proof}
	We first prove that both equalities \eqref{eq11162345} and \eqref{eq11162346} are satisfied.
	Let $\{f_k\}_{k\in\N}\subset\Co^\infty(\R^2)$ be a sequence converging to $f$ in $\Cow$ and such that $\Delta^{f_k}f_k=0$, as in Lemma~\ref{lem10251843}.
	Fix $V_1,V_2\in\Co^\infty_c(\R^2)$.
	Then \eqref{eq10151603} and \eqref{eq11162345} are satisfied by all $f_k$ thanks to Propositions~\ref{prop11170019}~and~\ref{prop11170020}.
	Passing to the limit $k\to\infty$, we prove that $f$ satisfies them too. 
	
	Now, we prove that the inequality \eqref{eq11170006} holds true.
	If $f\in\Co^\infty(\R^2)$, then we can apply Lemma~\ref{lem11171413}, where $\de_\tau \left(\frac{ \psi }{ (1+\psi^2)^{\frac12} }\right)= \frac{ \de_\tau \psi }{ (1+\psi^2)^{\frac32} }\ge0$
	because of Lemma~\ref{lem11191504}.
	So, \eqref{eq11170006} is proven for $f$ smooth.
	For $f\in\Cow$, let $\{f_k\}_{k\in\N}\subset\Co^\infty(\R^2)$ as in Lemma~\ref{lem10251843}.
	From Lemma~\ref{lem10241748} follows that, for fixed $V_1,V_2\in\Co^\infty_c(\R^2)$, it holds
	\[
	\lim_{k\to\infty}\II_{f_k}(V_1,V_2) = \II_f(V_1,V_2) ,
	\]
	thus $\II_f(V_1,V_2)\ge0$.
\end{proof}

\subsection{Proof of Lemma \ref{lem11171413}}

The proof of this lemma is just a computation, but quite elaborate.
For making the formulas more readable, we decided to drop the sign of integral along the proof.
In other words, all equalities in this section are meant as equalities of integrals on $\R^2$.
We will constantly use the formulas listed in Appendix~\ref{subs11241610} together with $\grad^f\psi=0$.

Before of all, we reorganise the integral in \eqref{eq11170006}:
\begin{align}  
\label{A}\tag{\textcircled{a}}	
	&\frac{  (\Delta^fV_2 -2 \psi \grad^fV_1 - f \Delta^fV_1)^2 }{ (1+\psi^2)^{\frac32} } +  \\
\label{B}\tag{\textcircled{b}}	
	&+ \frac{ \psi }{ (1+\psi^2)^{\frac12} } 
			\left(
			+ 6  f \cdot \grad^f V_1 \cdot \Delta^f V_1  
			+ 6  \psi \cdot (\grad^f V_1)^2 
			\right) \\
\label{C}\tag{\textcircled{c}}	
	&+ 2 \frac{ \psi }{ (1+\psi^2)^{\frac12} } 
			\left( -2 \psi \grad^fV_1 - f \Delta^fV_1 \right) \de_\eta V_1 \\
\label{D}\tag{\textcircled{d}}	
	&+ 2 \frac{ \psi }{ (1+\psi^2)^{\frac12} }
			\Delta^fV_2 \de_\tau V_2 \\
\label{E}\tag{\textcircled{e}}	
	&+  \frac{ \psi }{ (1+\psi^2)^{\frac12} }
			\left(
			- 4  \Delta^f V_2 \cdot \grad^f V_1  
			- 2 \grad^f V_2 \cdot \Delta^f V_1
			\right) \\
\label{F}\tag{\textcircled{f}}	
	&+	2 \frac{ \psi }{ (1+\psi^2)^{\frac12} }
			\left(
			\Delta^fV_2 \de_\eta  V_1
			+
			(-2 \psi \grad^fV_1 - f \Delta^fV_1) \de_\tau V_2
			\right)\\
\label{G}\tag{\textcircled{g}}	
	&+ 2 (1+\psi^2)^{\frac12}   (\de_\eta V_1 \de_\tau V_2 - \de_\tau V_1 \de_\eta V_2) .
\end{align}
In the following lemmas we will study \ref{B}+\ref{C}, \ref{D} and $\ref{E}+\ref{F}+\ref{G}$ separately in order to obtain the expansion of the square in the second term of the integral in Lemma \ref{lem11171413}.
\begin{lemma}\label{lem11171449}
	\begin{equation*}
	\ref{B}+\ref{C} = 
	\de_\tau \left(\frac{ \psi }{ (1+\psi^2)^{\frac12} }\right)
	\left( \grad^f(fV_1) \right)^2 .
	\end{equation*}
\end{lemma}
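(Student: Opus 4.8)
The plan is to expand \ref{B} and \ref{C}, to use integration by parts along the vector field $\grad^f$ to remove the second derivatives of $V_1$, and then to cash in the identities $\grad^f\psi=0$ and $\grad^fN=0$, where I abbreviate $N:=\psi/(1+\psi^2)^{1/2}$. Since in the smooth case $0=\Delta^ff=\grad^f\grad^ff=\grad^f\psi$, and $N$ is a smooth function of $\psi$ alone, indeed $\grad^fN=N'(\psi)\,\grad^f\psi=0$. I will freely use the identities of Appendix~\ref{subs11241610}: that $\grad^f$, viewed as a planar vector field, has divergence $\de_\tau f$, so that $\int_{\R^2}u\,\grad^fv=-\int_{\R^2}(\grad^fu)v-\int_{\R^2}uv\,\de_\tau f$ for compactly supported $u,v$; and the commutation $\grad^f\de_\tau w=\de_\tau\grad^fw-(\de_\tau f)\de_\tau w$.

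Set $u:=\grad^fV_1$. Substituting $\de_\eta V_1=\grad^fV_1-f\de_\tau V_1$ and $\Delta^fV_1=\grad^fu$ into \ref{B}+\ref{C} and collecting terms, one gets
\[
\ref{B}+\ref{C}=N\bigl(4fu\,\grad^fu+2\psi u^2+4\psi fu\,\de_\tau V_1+2f^2(\grad^fu)\,\de_\tau V_1\bigr).
\]
Next I would integrate by parts. Rewriting $4Nfu\,\grad^fu=2Nf\,\grad^f(u^2)$ and using $\grad^f(Nf)=N\psi$ — this is where $\grad^fN=0$ first enters — the resulting integral cancels the term $2N\psi u^2$ and leaves $-2Nf(\de_\tau f)u^2$. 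Integrating the last summand by parts and using $\grad^f(Nf^2\de_\tau V_1)=2Nf\psi\,\de_\tau V_1+Nf^2\,\grad^f\de_\tau V_1$ cancels the term $4N\psi fu\,\de_\tau V_1$; after replacing $\grad^f\de_\tau V_1$ by $\de_\tau u-(\de_\tau f)\de_\tau V_1$, the remaining cross terms cancel and one is left with $-Nf^2\de_\tau(u^2)$. Hence, as integrals,
\[
\ref{B}+\ref{C}=-2Nf(\de_\tau f)u^2-Nf^2\de_\tau(u^2),
\]
and an ordinary integration by parts in $\tau$, namely $-\int Nf^2\de_\tau(u^2)=\int\de_\tau(Nf^2)u^2=\int(f^2\de_\tau N+2Nf\,\de_\tau f)u^2$, produces $\ref{B}+\ref{C}=\de_\tau N\cdot f^2(\grad^fV_1)^2$.

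It then remains to trade $f^2(\grad^fV_1)^2$ for $(\grad^f(fV_1))^2$. Since $\grad^f(fV_1)=\psi V_1+fu$ and $2\psi fuV_1=\psi f\,\grad^f(V_1^2)$, this reduces to showing $\int_{\R^2}\de_\tau N\bigl(\psi^2V_1^2+\psi f\,\grad^f(V_1^2)\bigr)=0$. Integrating the second term by parts, using $\grad^f(\psi f\,\de_\tau N)=\psi^2\de_\tau N+\psi f\,\grad^f\de_\tau N$ (again via $\grad^f\psi=0$) together with $\grad^f\de_\tau N+(\de_\tau f)\de_\tau N=\de_\tau(\grad^fN)=0$, one finds $\int\de_\tau N\,\psi f\,\grad^f(V_1^2)=-\int\de_\tau N\,\psi^2V_1^2$, so the two terms cancel and the lemma follows. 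The step I expect to be the most delicate is organising the two $\grad^f$-integrations by parts of the middle paragraph: there is nothing conceptually hard, but the bookkeeping of the many terms is where an error would most easily creep in, and every cancellation there ultimately rides on the single fact $\grad^fN=0$ together with $\div\grad^f=\de_\tau f$.
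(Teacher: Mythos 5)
Your proof is correct and follows essentially the same route as the paper's: integration by parts along $\grad^f$ (with divergence $\de_\tau f$) exploiting $\grad^f\psi=0$, arriving at the intermediate identity $\int \de_\tau\bigl(\psi/(1+\psi^2)^{1/2}\bigr)f^2(\grad^fV_1)^2$, and then trading $f^2(\grad^fV_1)^2$ for $(\grad^f(fV_1))^2$ via one more $\grad^f$-integration by parts. The only difference is bookkeeping — you merge \ref{B} and \ref{C} before integrating by parts, while the paper processes them separately — and all the individual steps check out.
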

\begin{proof}[Proof of Lemma~\ref{lem11171449}]
	\begin{align*}
	\ref{B} &=
	\frac{ \psi }{ (1+\psi^2)^{\frac12} } 
		\left(
		 6  f  \grad^f V_1  \Delta^f V_1  
		+ 6  \psi  (\grad^f V_1)^2 
		\right) \\
	&= \frac{ \psi }{ (1+\psi^2)^{\frac12} } 
		\left(
		 3  f  \grad^f(\grad^fV_1)^2
		+ 6  \psi  (\grad^f V_1)^2 
		\right) \\
	&= 3 \frac{ \psi }{ (1+\psi^2)^{\frac12} } 
		\left(
		  \grad^f( f  (\grad^fV_1)^2)
		+   \psi  (\grad^f V_1)^2 
		\right) \\
	&= -3 \frac{ \psi }{ (1+\psi^2)^{\frac12} } f\de_\tau f (\grad^fV_1)^2
		+ 3 \frac{ \psi^2 }{ (1+\psi^2)^{\frac12} } (\grad^fV_1)^2\\
	&= - \frac32 \frac{ \psi }{ (1+\psi^2)^{\frac12} } \de_\tau (f^2) (\grad^fV_1)^2
		+ 3 \frac{ \psi^2 }{ (1+\psi^2)^{\frac12} } (\grad^fV_1)^2 .
	\end{align*}
	\begin{align*}
	\ref{C} &=
	- 2 \frac{ \psi }{ (1+\psi^2)^{\frac12} } 
		( 2 \psi \grad^fV_1 + f \Delta^fV_1 ) \de_\eta V_1 \\
	&= - 2 \frac{ \psi }{ (1+\psi^2)^{\frac12} } 
		( 2 \psi \grad^fV_1 + f \Delta^fV_1 ) (\grad^fV_1 - f\de_\tau V_1) \\
	&= - 4 \frac{ \psi^2 }{ (1+\psi^2)^{\frac12} } ( \grad^fV_1 )^2 
		+ 4 \frac{ \psi^2 }{ (1+\psi^2)^{\frac12} }f\grad^fV_1\de_\tau V_1 + \\
	&\qquad\qquad	- 2 \frac{ \psi }{ (1+\psi^2)^{\frac12} } f\Delta^fV_1 \grad^fV_1
		+ 2 \frac{ \psi }{ (1+\psi^2)^{\frac12} } f^2 \Delta^fV_1\de_\tau V_1 .
	\end{align*}
	We have two particular terms in this expression:
	\begin{align*}
	\textcircled{$\alpha$} &:=
	4 \frac{ \psi^2 }{ (1+\psi^2)^{\frac12} }f\grad^fV_1\de_\tau V_1 
	+ 2 \frac{ \psi }{ (1+\psi^2)^{\frac12} } f^2 \Delta^fV_1\de_\tau V_1 \\
	&= 2 \frac{ \psi }{ (1+\psi^2)^{\frac12} } \grad^f(f^2\grad^fV_1) \de_\tau  V_1  \\
	&= - 2 \frac{ \psi }{ (1+\psi^2)^{\frac12} } (f^2\grad^fV_1) \grad^f\de_\tau  V_1
		-  2 \de_\tau f \frac{ \psi }{ (1+\psi^2)^{\frac12} } (f^2\grad^fV_1) \de_\tau  V_1 \\
	&= - 2 \frac{ \psi }{ (1+\psi^2)^{\frac12} } f^2\grad^fV_1 
		(\grad^f\de_\tau V_1 + \de_\tau f\de_\tau V_1)
		 \\
	&= - 2 \frac{ \psi }{ (1+\psi^2)^{\frac12} } f^2\grad^fV_1 \de_\tau \grad^fV_1  \\
	&= -  \frac{ \psi }{ (1+\psi^2)^{\frac12} } f^2\de_\tau (\grad^fV_1)^2 \\
	&= \de_\tau \left(\frac{ \psi }{ (1+\psi^2)^{\frac12} }\right) f^2(\grad^fV_1)^2
		+ \frac{ \psi }{ (1+\psi^2)^{\frac12} } \de_\tau (f^2) (\grad^fV_1)^2
	\end{align*}
	and
	\begin{align*}
	\textcircled{$\beta$}&:=
	- 2 \frac{ \psi }{ (1+\psi^2)^{\frac12} } f\Delta^fV_1 \grad^fV_1 \\
	&= -  \frac{ \psi }{ (1+\psi^2)^{\frac12} } f \grad^f(\grad^fV_1 )^2 \\
	&= \frac{ \psi^2 }{ (1+\psi^2)^{\frac12} } (\grad^fV_1 )^2 
		+  \frac{ \psi }{ (1+\psi^2)^{\frac12} } f \de_\tau f (\grad^fV_1 )^2 \\
	&= \frac{ \psi^2 }{ (1+\psi^2)^{\frac12} } (\grad^fV_1 )^2 
		+  \frac{ \psi }{ (1+\psi^2)^{\frac12} }  \frac{\de_\tau (f^2)}{2} (\grad^fV_1 )^2 .
	\end{align*}
	Therefore:
	\begin{multline*}
	\ref{C} = 
	- 4 \frac{ \psi^2 }{ (1+\psi^2)^{\frac12} } ( \grad^fV_1 )^2 
		+ \textcircled{$\alpha$} + \textcircled{$\beta$} \\
	= - 3 \frac{ \psi^2 }{ (1+\psi^2)^{\frac12} } ( \grad^fV_1 )^2
		+ \de_\tau \left(\frac{ \psi }{ (1+\psi^2)^{\frac12} }\right) f^2(\grad^fV_1)^2 + \hfill \\ \hfill
		+ \frac32 \frac{ \psi }{ (1+\psi^2)^{\frac12} } \de_\tau (f^2) (\grad^fV_1)^2 .
	\end{multline*}
	Putting this together,
	\begin{align*}
	\ref{B}+\ref{C}
	&= \de_\tau \left(\frac{ \psi }{ (1+\psi^2)^{\frac12} }\right) f^2(\grad^fV_1)^2 \\
	&= \de_\tau \left(\frac{ \psi }{ (1+\psi^2)^{\frac12} }\right) (\grad^f(fV_1)-\psi V_1)^2 \\
	&= \de_\tau \left(\frac{ \psi }{ (1+\psi^2)^{\frac12} }\right) 
		( (\grad^f(fV_1))^2 + (\psi V_1)^2 - 2\grad^f(fV_1)\psi V_1 ) \\
	&= \de_\tau \left(\frac{ \psi }{ (1+\psi^2)^{\frac12} }\right) 
		( (\grad^f(fV_1))^2 - (\psi V_1)^2 - f\grad^f(\psi V_1^2) ) \\	
	&\overset{(*)}{=} \de_\tau \left(\frac{ \psi }{ (1+\psi^2)^{\frac12} }\right) 
		( (\grad^f(fV_1))^2 - (\psi V_1)^2 + \grad^ff \psi V_1^2 ) \\
	&= \de_\tau \left(\frac{ \psi }{ (1+\psi^2)^{\frac12} }\right) 
		(\grad^f(fV_1))^2 .
	\end{align*}
	In $(*)$ we used formula~\eqref{eq01121606}.
\end{proof}
\begin{lemma}\label{lem11171447}
	\begin{equation*}
	\ref{D} = 
		\de_\tau \left(\frac{ \psi }{ (1+\psi^2)^{\frac12} }\right) (\grad^fV_2)^2 .
	\end{equation*}
\end{lemma}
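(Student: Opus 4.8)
The identity for \ref{D} is obtained in the same spirit as the one for \ref{B}+\ref{C} in Lemma~\ref{lem11171449}, but with a much shorter chain of manipulations: the plan is to peel the two copies of $\grad^f$ off $\Delta^fV_2=\grad^f\grad^fV_2$ by integrating by parts twice.

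First I would abbreviate $h:=\psi/(1+\psi^2)^{1/2}$ and $u:=\grad^fV_2$, so that
\[
\ref{D} = 2\,h\,(\grad^f u)\,\de_\tau V_2 .
\]
Since $h$ depends on $\psi$ alone and the hypothesis $\Delta^ff=0$ means precisely $\grad^f\psi=0$, the chain rule gives $\grad^f h=0$, and hence $h\,\grad^f u=\grad^f(hu)$. Integrating by parts against $\de_\tau V_2$ with the adjoint of $\grad^f$ (one of the formulas collected in Appendix~\ref{subs11241610}, namely $\int_{\R^2}(\grad^f w)\,v = -\int_{\R^2} w\,(\grad^f v+\de_\tau f\cdot v)$, valid here because $w=hu$ is supported in $\spt V_2$) turns \ref{D} into $-2\,h\,u\,(\grad^f\de_\tau V_2+\de_\tau f\cdot\de_\tau V_2)$.

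Next I would apply the commutator identity $\grad^f\de_\tau V_2+\de_\tau f\cdot\de_\tau V_2=\de_\tau\grad^fV_2=\de_\tau u$, again among the formulas in Appendix~\ref{subs11241610}, which collapses the integrand to $-2\,h\,u\,\de_\tau u=-h\,\de_\tau(u^2)$. A final integration by parts in $\tau$, whose boundary term vanishes by the compact support of $u=\grad^fV_2$, gives $\de_\tau(h)\,u^2$, that is
\[
\ref{D}=\de_\tau\!\left(\frac{\psi}{(1+\psi^2)^{1/2}}\right)(\grad^fV_2)^2 ,
\]
which is the claim. I do not expect a real obstacle here; the only points worth checking carefully are that $\grad^f h=0$ rests exactly on $\Delta^ff=0$ and that each integration by parts is legitimate because everything in sight inherits compact support from $V_2$.
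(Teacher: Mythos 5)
Your argument is correct and is essentially the paper's own proof: the paper applies its combined identity \eqref{eq01121606} with $A=\psi/(1+\psi^2)^{1/2}$, $B=V_2$, $C=\grad^fV_2$ (using $\grad^f\psi=0$ to kill the $\grad^fA$ term), which is exactly your adjoint-plus-commutator computation carried out in one step, and then finishes with the same final integration by parts in $\tau$. No gap.
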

\begin{proof}[Proof of Lemma~\ref{lem11171447}]
	\begin{align*}
	\ref{D} 
	&=	2 \frac{ \psi }{ (1+\psi^2)^{\frac12} }
		\Delta^fV_2 \de_\tau V_2 \\
	&\overset{(*)}{=} -2 \frac{ \psi }{ (1+\psi^2)^{\frac12} }
		\grad^fV_2 \de_\tau \grad^fV_2 \\
 	&= - \frac{ \psi }{ (1+\psi^2)^{\frac12} }
		 \de_\tau (\grad^fV_2)^2 \\
	&= \de_\tau \left(\frac{ \psi }{ (1+\psi^2)^{\frac12} }\right)
		 (\grad^fV_2)^2.
	\end{align*}
	In $(*)$ we used formula~\eqref{eq01121606}.
\end{proof}

\begin{lemma}\label{lem11171446}
	\begin{equation*}
	\ref{E}+\ref{F}+\ref{G}
	 = -2 \de_\tau \left( \frac{ \psi }{ (1+\psi^2)^{\frac12} }\right) \grad^f(fV_1) \grad^fV_2 .
	\end{equation*}
\end{lemma}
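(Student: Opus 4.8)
The plan is to mimic the computations of Lemmas~\ref{lem11171449} and~\ref{lem11171447}: integrate $\ref{E}+\ref{F}+\ref{G}$ by parts several times (all displayed equalities below being understood as equalities of integrals over $\R^2$) until only the asserted cross term survives. Write $Q:=(1+\psi^2)^{1/2}$ and $h:=\psi/Q$. The hypothesis $\Delta^ff=0$ enters through $\grad^f\psi=0$, which also yields $\grad^fQ=\grad^fh=\grad^f(1/Q)=0$. Besides the integration-by-parts rule for $\grad^f$ (a special case of the identities of Appendix~\ref{subs11241610}: $\int u\,\grad^fv=-\int(\grad^fu+u\,\de_\tau f)\,v$), I will use the commutator identity $\grad^f\de_\tau v=\de_\tau\grad^fv-(\de_\tau f)\,\de_\tau v$, the algebraic identity $Q-h\psi=1/Q$, and the relation $\psi\,\de_\tau h=-\de_\tau(1/Q)$.

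First I would substitute $\de_\eta V_i=\grad^fV_i-f\,\de_\tau V_i$ everywhere in $\ref{F}$ and $\ref{G}$ and replace $\Delta^f$ by $\grad^f\grad^f$ throughout (legitimate since $f$ is smooth); this turns $\ref{G}$ into $2Q\,(\grad^fV_1\,\de_\tau V_2-\grad^fV_2\,\de_\tau V_1)$. Integrating by parts one $\grad^f$ in the terms $-4h\,\Delta^fV_2\,\grad^fV_1$ of $\ref{E}$ and $2h\,\Delta^fV_2\,\grad^fV_1$ of $\ref{F}$ (using $\grad^fh=0$), a large cancellation occurs and $\ref{E}+\ref{F}$ collapses to
\[
2h\,(\de_\tau f)\,\grad^fV_1\,\grad^fV_2-2hf\,\Delta^fV_2\,\de_\tau V_1-4h\psi\,\grad^fV_1\,\de_\tau V_2-2hf\,\Delta^fV_1\,\de_\tau V_2 .
\]

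Next I would gather the coefficients of $\de_\tau V_1$ and of $\de_\tau V_2$, now including $\ref{G}$. Using $\grad^f(hf\,\grad^fV)=h\psi\,\grad^fV+hf\,\Delta^fV$ (a consequence of $\grad^fh=0$ and $\grad^ff=\psi$) together with $Q-h\psi=1/Q$, these coefficients become $-2\,\grad^f(hf\,\grad^fV_2)-\tfrac{2}{Q}\,\grad^fV_2$ and $-2\,\grad^f(hf\,\grad^fV_1)+\tfrac{2}{Q}\,\grad^fV_1$ respectively. A further integration by parts, in which the commutator identity turns $\grad^f\de_\tau V_i+(\de_\tau f)\,\de_\tau V_i$ into $\de_\tau\grad^fV_i$, reassembles the cubic pieces into $2hf\,\de_\tau(\grad^fV_1\,\grad^fV_2)$; integrating this against $\de_\tau$ and combining with the term $2h\,(\de_\tau f)\,\grad^fV_1\,\grad^fV_2$ above produces exactly $-2f\,(\de_\tau h)\,\grad^fV_1\,\grad^fV_2$. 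One is then left with
\[
\ref{E}+\ref{F}+\ref{G}=-2f\,(\de_\tau h)\,\grad^fV_1\,\grad^fV_2+\tfrac{2}{Q}\big(\grad^fV_1\,\de_\tau V_2-\grad^fV_2\,\de_\tau V_1\big),
\]
and a last integration by parts, using $\grad^f(1/Q)=0$ and the commutator identity, shows $\tfrac{2}{Q}\big(\grad^fV_1\,\de_\tau V_2-\grad^fV_2\,\de_\tau V_1\big)=-2\psi\,(\de_\tau h)\,V_1\,\grad^fV_2$ (the sign coming from $\psi\,\de_\tau h=-\de_\tau(1/Q)$). Since $\grad^f(fV_1)=f\,\grad^fV_1+\psi V_1$, adding the two pieces gives $\ref{E}+\ref{F}+\ref{G}=-2\,(\de_\tau h)\,\grad^f(fV_1)\,\grad^fV_2$, which is the claim.

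The argument is elementary, but the bookkeeping is the real difficulty: roughly a dozen terms must be tracked through three rounds of integration by parts, and one must choose the pairings carefully. The one step that is not purely mechanical is spotting the identity $Q-h\psi=1/Q$; this is exactly what makes the stray $1/Q$-terms generated by the $\de_\eta V_i$ substitution cancel in the end. Keeping the two commutator sign conventions and the relation $\psi\,\de_\tau h=-\de_\tau(1/Q)$ straight is the other place that demands care.
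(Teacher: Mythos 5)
Your computation is correct and follows essentially the same route as the paper: the same toolkit ($\grad^f\psi=0$, the adjoint/commutator identities of the Appendix, and the cancellation $\sqrt{1+\psi^2}-\psi^2/\sqrt{1+\psi^2}=1/\sqrt{1+\psi^2}$) leads you to the identical intermediate expression $-2f\,\de_\tau\big(\psi/\sqrt{1+\psi^2}\big)\grad^fV_1\grad^fV_2+\tfrac{2}{\sqrt{1+\psi^2}}\big(\grad^fV_1\de_\tau V_2-\de_\tau V_1\grad^fV_2\big)$, and the final step is the same as the paper's. The only difference is the order of the intermediate bookkeeping (you gather coefficients of $\de_\tau V_1$ and $\de_\tau V_2$ before integrating by parts, whereas the paper processes \ref{F} first via formula~\eqref{eq01121606}), which does not change the substance of the argument.
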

\begin{proof}
	\begin{align*}
	\ref{G} 
	&= 2 (1+\psi^2)^{\frac12}   (\de_\eta V_1 \de_\tau V_2 - \de_\tau V_1 \de_\eta V_2) \\
	&= 2 (1+\psi^2)^{\frac12} ((\grad^fV_1-f\de_\tau V_1) \de_\tau V_2 - \de_\tau V_1 (\grad^fV_2-f\de_\tau V_2)) \\
	&= 2 (1+\psi^2)^{\frac12} (\grad^fV_1 \de_\tau V_2 - \de_\tau V_1 \grad^fV_2) .
	\end{align*}
	\begin{align*}
	\ref{F} &=
	2 \frac{ \psi }{ (1+\psi^2)^{\frac12} }
		\left(
		\Delta^fV_2 \de_\eta  V_1
		+ (-2 \psi \grad^fV_1 - f \Delta^fV_1) \de_\tau V_2
		\right) \\
	&= 2 \frac{ \psi }{ (1+\psi^2)^{\frac12} }
		\left(
		\Delta^fV_2 (\grad^fV_1-f\de_\tau V_1)
		- 2 \psi \grad^fV_1 \de_\tau V_2 
		- f \Delta^fV_1\de_\tau V_2 
		\right) \\
	&= 2 \frac{ \psi }{ (1+\psi^2)^{\frac12} }
		\left(
		- f\de_\tau V_1 \Delta^fV_2
		- f \de_\tau V_2 \Delta^fV_1
		+\Delta^fV_2 \grad^fV_1
		- 2 \psi \grad^fV_1 \de_\tau V_2 
		\right) \\
	&\overset{(*)}{=} 2 \frac{ \psi }{ (1+\psi^2)^{\frac12} }
		\left(
		\psi \de_\tau V_1 \grad^fV_2 + f \de_\tau (\grad^f V_1)  \grad^fV_2 
		+\psi \de_\tau V_2 \grad^fV_1 
			+\right.	 \\ 
	&\qquad\qquad\qquad\left.	+ f \de_\tau (\grad^f V_2)  \grad^fV_1
		+\Delta^fV_2 \grad^fV_1
		- 2 \psi \grad^fV_1 \de_\tau V_2 
		\right) \\
	&= 2 \frac{ \psi }{ (1+\psi^2)^{\frac12} }
		\left(
		\psi \de_\tau V_1 \grad^fV_2 + f \de_\tau (\grad^f V_1)  \grad^fV_2 
			+\right.  \\ 
	&\qquad\qquad\qquad\left.	+ f \de_\tau (\grad^f V_2)  \grad^fV_1
		+\Delta^fV_2 \grad^fV_1
		-  \psi \grad^fV_1 \de_\tau V_2 
		\right) .
	\end{align*}
	In $(*)$ we used formula~\eqref{eq01121606}.
	\begin{multline*}
	\ref{F}+\ref{G}
	= 2 \frac{(1+\psi^2)}{(1+\psi^2)^{\frac12}} (\grad^fV_1 \de_\tau V_2 - \de_\tau V_1 \grad^fV_2) + \hfill\\ \hfill
		+2 \frac{ \psi }{ (1+\psi^2)^{\frac12} }
		\left(
		\psi \de_\tau V_1 \grad^fV_2 + f \de_\tau (\grad^f V_1)  \grad^fV_2 
			+\right.\hfill \\ \hfill\left.
		+ f \de_\tau (\grad^f V_2)  \grad^fV_1
		+\Delta^fV_2 \grad^fV_1
		-  \psi \grad^fV_1 \de_\tau V_2 
		\right) \\
	= 2 \frac{1}{(1+\psi^2)^{\frac12}} (\grad^fV_1 \de_\tau V_2 - \de_\tau V_1 \grad^fV_2) + \hfill\\ \hfill
		+2 \frac{ \psi }{ (1+\psi^2)^{\frac12} }
		\left(
		 f \de_\tau (\grad^f V_1\grad^fV_2) 
		+\Delta^fV_2 \grad^fV_1 
		\right) .
	\end{multline*}
	\begin{multline*}
	\ref{E}+\ref{F}+\ref{G}
	= \frac{ \psi }{ (1+\psi^2)^{\frac12} }
			\left(
			- 4  \Delta^f V_2  \grad^f V_1  
			- 2 \grad^f V_2  \Delta^f V_1
			\right) 
			+\ref{F}+\ref{G} \\
	= 2 \frac{ \psi }{ (1+\psi^2)^{\frac12} }
			\left(
			- \Delta^f V_2  \grad^f V_1  
			- \grad^f V_2  \Delta^f V_1 
			+ f \de_\tau (\grad^f V_1  \grad^fV_2 )
			\right) + 	\hfill\\ \hfill
		+2 \frac{1}{(1+\psi^2)^{\frac12}} (\grad^fV_1 \de_\tau V_2 - \de_\tau V_1 \grad^fV_2) \\
	= 2 \frac{ \psi }{ (1+\psi^2)^{\frac12} }
			\left(
			- \grad^f( \grad^f V_2  \grad^f V_1 ) 
			+ f \de_\tau (\grad^f V_1\grad^fV_2)  
			\right) +\hfill\\ \hfill
		+2 \frac{1}{(1+\psi^2)^{\frac12}} (\grad^fV_1 \de_\tau V_2 - \de_\tau V_1 \grad^fV_2) \\
	= 2 \frac{ \psi }{ (1+\psi^2)^{\frac12} }  \de_\tau f \grad^f V_2  \grad^f V_1  
		- 2 \de_\tau \left( \frac{ \psi }{ (1+\psi^2)^{\frac12} }\right)f \grad^f V_1\grad^fV_2 + \hfill \\
		- 2 \frac{ \psi }{ (1+\psi^2)^{\frac12} } \de_\tau f \grad^f V_1\grad^fV_2 
		+2 \frac{1}{(1+\psi^2)^{\frac12}} (\grad^fV_1 \de_\tau V_2 - \de_\tau V_1 \grad^fV_2) \\
	= - 2 \de_\tau \left( \frac{ \psi }{ (1+\psi^2)^{\frac12} }\right)f \grad^f V_1\grad^fV_2 
		+2 \frac{1}{(1+\psi^2)^{\frac12}} (\grad^fV_1 \de_\tau V_2 - \de_\tau V_1 \grad^fV_2) . 
	\end{multline*}
	In particular, we have
	\begin{multline*}
	\frac{1}{(1+\psi^2)^{\frac12}} (\grad^fV_1 \de_\tau V_2 - \de_\tau V_1 \grad^fV_2)  
	= - \frac{1}{(1+\psi^2)^{\frac12}} \de_\tau\grad^f V_2  V_1
		+\hfill\\ \hfill
		+ \de_\tau \left(\frac{1}{(1+\psi^2)^{\frac12}}\right) \grad^fV_2 V_1
		+ \frac{1}{(1+\psi^2)^{\frac12}} \de_\tau \grad^fV_2 V_1 \\
	= \de_\tau \left(\frac{1}{(1+\psi^2)^{\frac12}}\right) \grad^fV_2 V_1 
	\end{multline*}
	and
	\[
	\de_\tau \left(\frac{1}{(1+\psi^2)^{\frac12}}\right)
	= - \frac{1}{(1+\psi^2)^{\frac32}} \psi \de_\tau \psi
	= - \psi \de_\tau \left(\frac{\psi}{(1+\psi^2)^{\frac12}}\right) .
	\]
	Therefore
	\begin{multline*}
	\ref{E}+\ref{F}+\ref{G} = \\
	= - 2 \de_\tau \left( \frac{ \psi }{ (1+\psi^2)^{\frac12} }\right)f \grad^f V_1\grad^fV_2 
		+2 \frac{1}{(1+\psi^2)^{\frac12}} (\grad^fV_1 \de_\tau V_2 - \de_\tau V_1 \grad^fV_2) \\
	= -2 \de_\tau \left( \frac{ \psi }{ (1+\psi^2)^{\frac12} }\right) f \grad^f V_1\grad^fV_2 
		-2 \psi \de_\tau \left(\frac{\psi}{(1+\psi^2)^{\frac12}}\right) \grad^fV_2 V_1 \\
	= -2 \de_\tau \left( \frac{ \psi }{ (1+\psi^2)^{\frac12} }\right) \grad^f(fV_1) \grad^fV_2 .
	\end{multline*}
\end{proof}

\appendix
\section{Useful formulas}\label{subs11241610}
In the case $f\in\Co^\infty(\R^2)$,
the adjoint operator of $\grad^f$ is
\[
(\grad^f)^* = -\grad^f-\de_\tau f ,
\]
i.e., if $A,B\in\Co^\infty(\R^2)$ and one of them has compact support, then
\[
\int_{\R^2} A\cdot\grad^f B \dd\eta\dd\tau
= - \int_{\R^2} \Big[ \grad^fA\cdot B + \de_\tau f\cdot A\cdot B \Big] \dd\eta\dd\tau .
\]
Notice that, if $f$ is smooth, the following holds:
\begin{align*}
	\de_\eta &= \grad^f - f\de_\tau , \\
	\de_\tau\grad^f  &= \grad^f\de_\tau + \de_\tau f\de_\tau  .
\end{align*}
If $A,B,C\in\Co^\infty(\R^2)$ and one of them has compact support, then
\begin{equation}\label{eq01121606}
\int_{\R^2}  A\cdot \de_\tau B \cdot\grad^fC \dd\eta\dd\tau
=
-\int_{\R^2}  \left(\grad^fA \cdot\de_\tau B\cdot C + A\cdot \de_\tau\grad^f B\cdot C \right)  \dd\eta\dd\tau .
\end{equation}

\printbibliography
\end{document}